\numberwithin{equation}{section}
\newtheorem{lem}{Lemma}[section]
\newtheorem{thm}{Theorem}[section]
\newtheorem{que}{Question}[section]
\newtheorem{cor}{Corollary}[section]
\theoremstyle{definition}
\newtheorem{defin}{Definition}[section]
\theoremstyle{remark}
\title[Transfer operator and Conformal Maps]{Transfer operator and Conformal Measures for a Class of Maps
Having Covering Property} \subjclass{37D35 (primary), 28D20,
30D20(secondary)}
\author{Zheng Jian-Hua}
\thanks{Supported by the grant (No. 11171170) of NSF of China and the research fund (No. 20100002110012)
for the Doctoral Program of Higher Education, Ministry of Education
of China.}
\address{Department of Mathematical Sciences, Tsinghua University, P. R. China}
\email{jzheng@math.tsinghua.edu.cn}
\keywords{Keywords and Phrases:\
\ Transfer operator, Conformal measures, Pressure functions,
Hausdorff dimension, Continuous map}
\begin{document}

%\date{\today, Preliminary version}

\begin{abstract}
Let $(X,d)$ be a metric space and $X_0$ be an open and dense subset
of $X$. We develop the Walters' theory and discuss the existence of
conformal measures in terms of the Perron-Frobenius-Ruelle operator
for a continuous map $T:X_0\rightarrow X$ and the Bowen formula
about Hausdorff dimension and Poincar\'e exponent of some invariant
subsests for $T$ with some expanding property.
\end{abstract}

\maketitle

\section{Introduction and Notations}
This paper consists of two aspects. Firstly, we make a careful study
of the existences of conformal measures, invariant measures and
equilibrium states from Walters' viewpoint in \cite{Walters} for
some countable-to-one maps, which is able to be used to the complex
transcendental dynamics. Secondly, we discuss the Bowen formula
about the Poincare exponent and the Hausdorff dimension of some
invariant subsets.

We introduce basic notations which will be often used. Let
$(\widehat{X},d)$ be a compact metric space and $X$ be an open and
dense subset of $X$. For an open and dense subset $X_0$ of $X$,
consider a continuous map $T:X_0\rightarrow X$.
$\mathcal{C}(\Omega)$ will denote the set of all real-valued
continuous functions on $\Omega=\widehat{X}, X$ or $X_0$. Then
$\mathcal{C}(\widehat{X})$ is a Banach space with the supremum norm:
for $f\in \mathcal{C}(\widehat{X})$,
$\|f\|=\max\{|f(x)|:x\in\widehat{X}\}$ and
$\mathcal{C}(\widehat{X})^*$ is the dual space of
$\mathcal{C}(\widehat{X})$. For $f\in \mathcal{C}(\widehat{X})$,
$\|f\|$ is the norm of $f$ and the notation "$\rightrightarrows$"
will denote convergence under the norm. By $\mathcal{M}(\Omega)$ we
mean the set of all probability measures on the $\sigma$-algebra of
Borel sets of $\Omega=\widehat{X}$ or $X$

A $\mu\in\mathcal{M}(X)$ is called $g$-conformal measure for a
$\mu$-measurable function $g:X_0\rightarrow \mathbb{R}$ over $X_0$
if $g$ is Jacobian of $T$ with respect to $\mu$, namely, for any
Borel subset $A$ of $X_0$ such that $T$ is injective on $A$, we have
$$\mu(T(A))=\int_Ag\mathrm{d}\mu.$$
A general scheme for constructing conformal measure can be found in
Denker and Urbanski \cite{DenkerUrbanski3}, but in this paper, we
use the transfer operator to get the desired conformal measure.
Actually it is the eigenmeasure of the dual operator of the transfer
operator. The method has been used in many references, e.g., Ruelle
\cite{Ruelle}, Walters \cite{Walters}.

To guarantee the existence of the transfer operator of
$T:X_0\rightarrow X$ from $\mathcal{C}(\widehat{X})$ into itself, we
impose some conditions on $T$ and $\varphi\in \mathcal{C}(X_0)$
which are listed as follows:

\

(1a) \textsl{The set $T^{-1}(x)$ for each $x\in X$ is at most
countable.}

\

(1b) \textsl{$T$ has the uniformly covering property: there exists a
$\delta>0$ such that for each $x\in X$, $T^{-1}(B_X(x,\delta))$ can
be written uniquely as a disjoint union of a finite or countable
number of open subsets $A_i(x)\ (1\leq i\leq N\leq \infty)$ of $X_0$
and for each $i$, $T$ is a homeomorphism of $A_i(x)$ onto
$B_X(x,\delta)$, where $B_X(x,\delta)=B(x,\delta)\cap X$; For the
simplicity, we will call $A_i(x)$ injective component of $T^{-1}$
over $B_X(x,\delta)$.}

\

(1c)   \textsl{the inverse of $T$ is locally uniformly continuous:
$\forall \varepsilon>0$, $\exists \delta_0$ with $0<\delta_0<\delta$
such that for each $x\in X$ and each $y\in X_0$ with $T(y)=x$, once
$d(x,x')<\delta_0$ for $x'\in X$, we have $d(T^{-1}_y(x),
T^{-1}_y(x'))<\varepsilon,$ where $T^{-1}_y$ is the branch of the
inverse of $T$ which sends $x$ to $y$, that is to say, every
injective component of $T^{-1}$ over $B_X(x,\delta_0)$ has diameter
less than $\varepsilon$.}

\

(1d) \textsl{Let $\varphi\in \mathcal{C}(X_0)$. $\forall
\varepsilon>0$, there exists a $0<\delta_1<\delta$ such that for any
pair $x, x'\in X$, once $d(x,x')<\delta_1$, we have
$$\sum_{T(y)=x}\left|\exp(\varphi(T^{-1}_y(x)))-\exp(\varphi(T^{-1}_y(x')))\right|<\varepsilon,$$
that is,
$\sum_{T(y)=x}\left|\exp(\varphi(T^{-1}_y(x)))-\exp(\varphi(T^{-1}_y(x')))\right|\rightarrow
0$ uniformly as $d(x,x')\rightarrow 0$.}

\

An ordered pair $(T, \varphi)$ is called admissible if $T$ satisfies
(1a), (1b), (1c), (1d) and $\varphi\in \mathcal{C}(X_0)$ is summable
on $X$, that is to say,
$$\sup\left\{\sum_{T(y)=x}\exp(\varphi(y)):\ \ x\ \in
X\right\}<+\infty.$$ Then for a summable function $\varphi$ on $X$,
$$\mathcal{L}_\varphi(f)(x):=\sum_{T(y)=x}f(y)\exp(\varphi(y)), \forall x\in
X$$ is a bounded real-valued function on $X$ for a bounded
real-valued function $f$ on $X_0$. Sometimes, we write
$\mathcal{L}_{\varphi,T}$ for $\mathcal{L}_\varphi$ to emphasize
$T$. It is obvious that $T^n$ is a continuous mapping of
$T^{-n+1}X_0$ to $X$. Set
$$S_n\varphi(y)=\sum_{i=0}^{n-1}\varphi(T^i(y)),\ y\in T^{-n+1}X_0$$
and noting that $T^{-n+1}X_0\subseteq X_0$, we easily deduce
\begin{equation}\label{123}
\mathcal{L}_{\varphi,T}^n(f)(x)=\mathcal{L}_{S_n\varphi,T^n}(f)(x)=\sum_{T^n(y)=x}f(y)\exp(S_n\varphi(y)),\
x\in X,\end{equation} here and throughout the paper we denote by
$\mathcal{L}_{\varphi,T}^n$ the $n$th iterate of
$\mathcal{L}_{\varphi}=\mathcal{L}_{\varphi,T}$. Now we introduce
the pressure function. For a point $x\in X$, define
$$\overline{P_x}(T,\varphi)=\limsup_{n\rightarrow\infty}\frac{1}{n}\log\mathcal{L}^n_\varphi(1)(x),$$
$$\underline{P_x}(T,\varphi)=\liminf_{n\rightarrow\infty}\frac{1}{n}\log\mathcal{L}^n_\varphi(1)(x)$$
and if $\overline{P_x}(T,\varphi)=\underline{P_x}(T,\varphi)$, we
write the value as $P_x(T,\varphi)$, and if $P_x(T,\varphi)$ is
independent of the choice of $x$, we write the value as
$P(T,\varphi)$, which is called the pressure (function) of $T$ with
respect to $\varphi$. To guarantee the existence of the pressure
function $P(T,\varphi)$, we need the following condition:

(1e) \textsl{For arbitrary $\varepsilon>0$, there exists a $m\geq 1$
such that $T^{-m}(x)$ is $\varepsilon-$dense in $X$ for each $x\in
X$.}

A continuous map $T:X_0\rightarrow X$ satisfying (1e) is called
(topologically) backward dense. If for a fixed $\varepsilon>0$, (1e)
holds, then we call $T$ backward $\varepsilon-$dense.

The following is the first main result we shall establish.

\begin{thm}\label{thm1.1+}\ \  Let $(T,\varphi)$ be admissible and
for a sequence of positive numbers $\{K_n\}$ with
$\frac{K_n}{n}\rightarrow 0$ as $n\rightarrow\infty$, we have
\begin{equation}\label{1add}|S_n\varphi(y)-S_n\varphi(y')|\leq K_n\end{equation} whenever $y$ and $y'$
are in a component of $T^{-n}(B_X(x,\delta)), \forall x\in X$ and
$T$ is backward $\delta$-dense. Then the pressure function
$P(T,\varphi)$ of $T$ with respect to $\varphi$ exists and there
exists a $\mu\in \mathcal{M}(\widehat{X})$ such that
$\exp(-\varphi+P(T,\varphi))$ is the Jacobian of $T$ with respect to
$\mu$. From the backward dense property, $\mu$ is positive on
nonempty open sets. Finally, we have
\begin{equation}\label{add10}Ce^{-K_n}\leq{\mu(T^{-n}_x(B(T^n(x),\delta)))\over
\exp(S_n\varphi(x)-nP(T,\varphi))}\leq e^{K_n},\ \ \forall\ x\in
T^{-n}(X), \forall\ n\in\mathbb{N},
\end{equation}
for a constant $C>0$ only depending on $\delta$, where
$T^{-n}_x(B(T^n(x),\delta))$ is the component of
$T^{-n}(B(T^n(x),\delta))$ containing $x$ and on it $T^n$ is
injective.
\end{thm}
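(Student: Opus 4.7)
\emph{Strategy.} The plan is to follow the classical Bowen--Ruelle--Walters scheme: produce $\mu$ as an eigenmeasure of the dual transfer operator $\mathcal{L}_\varphi^*$ via a Schauder--Tychonoff fixed-point argument, and then extract the Jacobian identity, the positivity on open sets, and the estimate (\ref{add10}) from the distortion bound (\ref{1add}) combined with backward density.

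\emph{Existence of the pressure.} Fix $x\in X$ and split $\mathcal{L}_\varphi^n(1)(x)=\sum_{T^n(y)=x}e^{S_n\varphi(y)}$ according to the partition of $T^{-n}(B(x,\delta))$ into injective components obtained by iterating (1b); hypothesis (\ref{1add}) shows that on each component $V_i$ the values of $e^{S_n\varphi}$ oscillate by at most a factor $e^{K_n}$, so picking $y_i\in V_i$ gives $e^{-K_n}\sum_i e^{S_n\varphi(y_i)}\leq \mathcal{L}_\varphi^n(1)(x)\leq e^{K_n}\sum_i e^{S_n\varphi(y_i)}$. Using backward $\delta$-density to produce $m$ for which $T^{-m}(x')$ is $\delta$-dense uniformly in $x'\in X$, and pairing branches of $T^{-(n+m)}(x)$ with those of $T^{-n}(x')$ through a common ball, I obtain the almost-submultiplicative estimate
\[
e^{-K_n-K_m-O(1)}\,\mathcal{L}_\varphi^n(1)(x')\leq \mathcal{L}_\varphi^{n+m}(1)(x)\leq e^{K_n+K_m+O(1)}\,\mathcal{L}_\varphi^n(1)(x').
\]
Since $K_n/n\to 0$, an approximate Fekete argument yields the common limit $P(T,\varphi)=\lim_n \frac{1}{n}\log \mathcal{L}_\varphi^n(1)(x)$, independent of $x$.

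\emph{Construction of $\mu$ and the Jacobian identity.} Condition (1d) ensures $\mathcal{L}_\varphi$ is a bounded continuous operator on $\mathcal{C}(\widehat{X})$, so its dual acts on the weak-$*$ compact convex set $\mathcal{M}(\widehat{X})$. The map $F(\nu)=\mathcal{L}_\varphi^*\nu/(\mathcal{L}_\varphi^*\nu)(1)$ is weak-$*$ continuous, and Schauder--Tychonoff delivers a fixed point $\mu$ with $\mathcal{L}_\varphi^*\mu=\Lambda\mu$ for some $\Lambda>0$; iterating and testing against $f\equiv 1$ yields $\Lambda^n=\int \mathcal{L}_\varphi^n(1)\,d\mu$, and the uniform two-sided bound $\mathcal{L}_\varphi^n(1)\asymp e^{nP(T,\varphi)}$ established above forces $\Lambda=e^{P(T,\varphi)}=:\lambda$. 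For the Jacobian property, a direct computation gives $\mathcal{L}_\varphi(\chi_A e^{-\varphi})=\chi_{T(A)}$ for any Borel $A\subset X_0$ on which $T$ is injective; substituting this into the eigenequation (first for continuous approximations of $\chi_A e^{-\varphi}$ and then extending by monotone convergence) delivers $\mu(T(A))=\int_A e^{-\varphi+P(T,\varphi)}\,d\mu$.

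\emph{Positivity, the estimate (\ref{add10}), and the main obstacle.} Any nonempty open $U\subset X$ contains a ball $B(x_0,\varepsilon)$, and backward $\varepsilon$-density produces $n$ with $T^{-n}(x)\cap U\neq\emptyset$ for every $x\in X$; the distortion estimate (\ref{1add}) applied to a single preimage in $U$ bounds $\mathcal{L}_\varphi^n(\chi_U)$ below by a positive constant on $X$, and combining this with $\lambda^n\mu(U)=\int \mathcal{L}_\varphi^n(\chi_U)\,d\mu$ forces $\mu(U)>0$. For (\ref{add10}), set $V=T^{-n}_x(B(T^n(x),\delta))$ and iterate the Jacobian identity to get $\mu(B(T^n(x),\delta))=\int_V e^{-S_n\varphi(y)+nP(T,\varphi)}\,d\mu(y)$; since (\ref{1add}) gives $|S_n\varphi(y)-S_n\varphi(x)|\leq K_n$ throughout $V$, we conclude
\[
\mu(V)\,e^{-S_n\varphi(x)+nP(T,\varphi)-K_n}\leq \mu(B(T^n(x),\delta))\leq \mu(V)\,e^{-S_n\varphi(x)+nP(T,\varphi)+K_n},
\]
and (\ref{add10}) follows after bounding $\mu(B(T^n(x),\delta))$ above by $1$ and below, uniformly in $n$ and $x$, by a constant $C>0$ depending only on $\delta$---the latter from a finite cover of $\widehat{X}$ by $(\delta/2)$-balls centered in $X$ together with the positivity of $\mu$ on open sets. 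I expect the main technical hurdle to lie in the pressure step: establishing uniform comparability of $\mathcal{L}_\varphi^n(1)$ at different base points, and hence both the existence of $P(T,\varphi)$ and the identification $\Lambda=\lambda$, requires careful branch-pairing via (1b), (1c), and backward density while controlling the accumulated error $e^{\pm(K_n+K_m)}$ under the merely qualitative hypothesis $K_n=o(n)$.
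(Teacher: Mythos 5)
Your overall architecture is the paper's: an eigenmeasure of $\mathcal{L}_\varphi^*$ via Schauder--Tychonoff (Theorem \ref{thm2.1}), existence of $P(T,\varphi)$ by approximate subadditivity plus a Fekete argument (Theorem \ref{6add}), identification of the eigenvalue with $e^{P(T,\varphi)}$ (Lemma \ref{2add}), and \eqref{add10} from the iterated Jacobian identity, the distortion bound \eqref{1add}, boundedness by $1$ above and a finite $\delta/2$-cover below (Lemma \ref{add11}); your treatment of the Jacobian identity, of positivity, and of \eqref{add10} matches the paper's.

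There is, however, a genuine gap in your pressure/eigenvalue step. The displayed ``almost-submultiplicative estimate'' $e^{-K_n-K_m-O(1)}\mathcal{L}_\varphi^n(1)(x')\le \mathcal{L}_\varphi^{n+m}(1)(x)\le e^{K_n+K_m+O(1)}\mathcal{L}_\varphi^n(1)(x')$, with an $O(1)$ uniform over all pairs $x,x'\in X$, does not follow from the hypotheses: pairing branches ``through a common ball'' costs a factor $e^{S_m\varphi(w_0)}$ for a connecting preimage $w_0\in T^{-m}(x)\cap B_X(x',\delta)$, and admissibility only controls the sums $\sum_{T(y)=x}e^{\varphi(y)}$ from above; it gives no uniform lower bound on the individual values $S_m\varphi(w_0)$ (think of $\varphi=-t\log f^{\times}$, which is unbounded below), so the $O(1)$ cannot be taken independent of $(x,x')$. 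The same defect infects your assertion of a \emph{uniform} two-sided bound $\mathcal{L}_\varphi^n(1)\asymp e^{nP(T,\varphi)}$: the uniform lower half $\inf_x\mathcal{L}_\varphi^n(1)(x)\ge e^{nP-o(n)}$ is precisely what your identification $\Lambda=e^{P(T,\varphi)}$ rests on, and it is not available under the stated hypotheses. The paper avoids both points: in Theorem \ref{6add} all comparisons are anchored at one point $a$ and at finitely many preimages $\widehat{x}_i\in T^{-p}(a)\cap B_X(x_i,\delta)$ chosen from a finite $\delta$-cover, so the error constant $A_p=\max_i\{-S_p\varphi(\widehat{x}_i)\}$ is finite and one gets only one-sided inequalities such as $\mathcal{L}_\varphi^{m+p}(1)(a)\ge e^{-A_p-K_m}\mathcal{L}_\varphi^{m}(1)(x)$, which still give $a_{n+m}\le a_n+a_m+K_m+C$ and hence the Fekete limit and $\overline{P_x}\le P_a$ for all $x$; and in Lemma \ref{2add} the identity $\log\lambda=P(T,\varphi)$ is extracted from $\lambda^n=\mu(\mathcal{L}_\varphi^n(1))$ through the $\mu$-weighted covering bounds $\lambda^n\ge \mu(B(x,\delta))e^{-K_n}\mathcal{L}_\varphi^n(1)(x)$ and $\lambda^n\le e^{K_n}\sum_i\mu(B(x_i,\delta))\mathcal{L}_\varphi^n(1)(x_i)$, which use \eqref{1add} only inside single $\delta$-balls and never a uniform lower bound on $\mathcal{L}_\varphi^n(1)$. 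Until you replace your uniform two-sided comparability by this anchoring/measure-weighting device, the steps ``$P(T,\varphi)$ exists'' and ``$\Lambda=e^{P(T,\varphi)}$'' are unproved; with that repair your argument coincides with the paper's proof.
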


A component $U$ of $T^{-n}(B_X(x,\delta))$ means that $T^n$ maps $U$
onto $B_X(x,\delta)$ and $U$ cannot be written into $U=U_1\cup U_2$
such that $U_i (i=1,2)$ are open and disjoint and $T^n$ maps $U_i$
onto $B_X(x,\delta)$. We remark on (\ref{1add}). If $K_n$ is chosen
to be a fixed constant $C$ and $y, y'$ have the distance of Bowen
metric $d_n$ less than $\delta$ and $X_0=X=\widehat{X}$, then the
condition (\ref{1add}) is known as Bowen condition (\cite{Bowen},
\cite{HaydnRuelle}, \cite{Walters2}). Here
$d_n(y,y')=\max\{d(f^j(y),f^j(y')):0\leq j\leq n-1\}.$ About
(\ref{add10}), generally, under the assumption of Theorem
\ref{thm1.1+} we cannot assert that $T^n$ is injective on
$T^{-n}_x(B(T^n(x),\delta))$. If $T^n$ is injective on
$T^{-n}_x(B(T^n(x),\delta))$ for all $x\in X$ and $n\in\mathbb{N}$,
then $\mu$ is a Gibbs state as in the definition given in
\cite{PrzytyckiUrbanski}. A transcendental parabolic meromorphic
function on the Riemann sphere satisfies the assumptions of Theorem
\ref{thm1.1+} with $K_n=O(\log n)$ (cf. \cite{Zheng}).

Next we consider the existence of invariant measure equivalent to
the conformal measure $\mu$. To the end, we need an expanding
condition:

(1c*) \textsl{whenever $y$ and $y'$ are in one of $A_i(x)$'s, we
have
$$d(T(y),T(y'))\geq d(y,y').$$}

Let $\mathcal{M}(\Omega,T)$ be the set of all invariant measures in
$\mathcal{M}(\Omega)$ for $T$. For $\Omega=X$, that $\mu\in
\mathcal{M}(\Omega,T)$ means $\mu(T^{-1}B)=\mu(B)$ for any
measurable subset $B$ of $X$, that is, $T$ preserves the measures
$\mu|_{X_0}$ and $\mu|_X$ and in this case, $\mu|_X(X\setminus
X_0)=0$. For $\mu\in\mathcal{M}(\Omega)$ and
$f\in\mathcal{C}(\Omega)$, set $\mu(f)=\int fd\mu$ and for
$h\in\mathcal{C}(\Omega)$, define $h\cdot\mu$ by
$(h\cdot\mu)(f)=\mu(hf), f\in\mathcal{C}(\Omega)$. Let $\mathcal{B}$
be $\sigma$-algebra of Borel sets of $X$. If $\mathcal{D}$ is a
subalgebra of $\mathcal{B}$ and $\mu\in\mathcal{M}(X)$, then
$E_\mu(f/\mathcal{D})$ (respectively,
$I_\mu(\mathcal{B}/\mathcal{D})$) is the conditional expectation
(respectively, information) of $f$ (respectively, $\mathcal{B}$)
with respect to $\mathcal{D}$. The second result we shall establish
is a modifying version of the main results in Walters
\cite{Walters}.

\begin{thm}\label{thm1.1}  Let the pair
$(T,\varphi)$ be admissible and for some fixed $N\in\mathbb{N}$,
$T^N$ satisfy (1c*) and (1g) for some $\delta_N$ and (1e). Then

(1) there exist $\mu\in\mathcal{M}(X)$ and $\lambda>0$ such that
$\mathcal{L}_\varphi^*(\mu)=\lambda\mu$ and $\lambda e^{-\varphi}$
is the Jacobian of $T$ with respect to $\mu$. The pair
$(\lambda,\mu)$ is uniquely determined by the conditions $\lambda>0,
\mu\in\mathcal{M}(X)$ and $\mathcal{L}_\varphi^*(\mu)=\lambda\mu$;

(2) there exists a $h\in \mathcal{C}(\widehat{X})$ with $h>0$ such
that $\mu(h)=1,\ \mathcal{L}_\varphi(h)=\lambda h$;

(3) $h$ satisfies $h(x)\leq e^{C_\varphi(x,x')}h(x')$ and $h$ is
uniquely determined by this condition and the properties $h>0,
\mu(h)=1$ and $\mathcal{L}_\varphi(h)=\lambda h$;

(4) $\lambda^{-n}\mathcal{L}_\varphi^n(f)\rightrightarrows h\cdot
\mu(f), \forall f\in \mathcal{C}(\widehat{X})$;

(5) $m=h\cdot\mu$ is a Gibbs invariant measure for $T$ and
$\mathcal{L}^*_\psi(m)=m$, where $$\psi=\varphi-\log\lambda+\log
h-\log h\circ T.$$

(6)
$\log\lambda=P(T,\varphi)=\sup\{\nu(I_\nu(\mathcal{B}|T^{-1}\mathcal{B})+\varphi):\nu\in\mathcal{M}(X,T)\}$
and $m$ is the equilibrium state.

(7) $m$ and $\mu$ are positive on nonempty open sets and have no
atoms.
\end{thm}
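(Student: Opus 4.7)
The proof follows the scheme of Walters~\cite{Walters}, with modifications forced by two features of the present setting: the map is countable-to-one rather than finite-to-one, and it is defined on an open dense subset $X_0$ rather than on all of $\widehat X$. Condition (1b) and the summability hypothesis on $\varphi$ ensure that $\mathcal{L}_\varphi$ is bounded, while (1d) is what makes it send $\mathcal{C}(\widehat X)$ into itself. For (1) I would define $\Lambda(\mu)=\mathcal{L}_\varphi^*\mu/\mathcal{L}_\varphi^*\mu(1)$ on $\mathcal{M}(\widehat X)$ and apply the Schauder--Tychonoff fixed point theorem in the weak$^*$ topology. Continuity of $\Lambda$ reduces to $\mathcal{L}_\varphi$ preserving $\mathcal{C}(\widehat X)$, which (1d) provides. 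A fixed point $\mu$ yields $\mathcal{L}_\varphi^*\mu=\lambda\mu$ with $\lambda=\mu(\mathcal{L}_\varphi 1)>0$, and the Jacobian identity $\mu(T(A))=\int_A \lambda e^{-\varphi}\,d\mu$ is read off from (1b) by testing $\mu$ against $\mathbf{1}_{T(A)}\mathcal{L}_\varphi f$ for suitable $f$. Uniqueness of $(\lambda,\mu)$ will use the expansion (1c*) of $T^N$ together with backward density (1e) to force topological transitivity, after which two such measures must coincide.

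For (2)--(4) my plan is to study $h_n=\lambda^{-n}\mathcal{L}_\varphi^n(1)$. Uniform two-sided bounds on $X$ follow from the Gibbs-type estimate (\ref{add10}) of Theorem~\ref{thm1.1+} combined with backward density. Equicontinuity is obtained by matching the preimage trees at two nearby points via the branches $T^{-N}_y$ supplied by (1c*), and controlling the matched discrepancies using (1d) and (1g). Arzel\`a--Ascoli then yields $h>0$ with $\mathcal{L}_\varphi h=\lambda h$ and $\mu(h)=1$; the distortion bound in (3) is the corresponding finite-$n$ inequality passed to the limit. The uniform convergence in (4) is the delicate step: I would show that on the cone of positive continuous functions satisfying the bound from (3), the normalized operator $\lambda^{-1}\mathcal{L}_\varphi$ is a genuine contraction (in Birkhoff's projective sense) thanks to the expansion of $T^N$, so iterates converge exponentially to the rank-one projection $f\mapsto h\cdot\mu(f)$.

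Parts (5)--(7) are then essentially deductions. Setting $m=h\cdot\mu$ and $\psi=\varphi-\log\lambda+\log h-\log h\circ T$, a direct computation gives $\mathcal{L}_\psi 1=1$ and $\mathcal{L}_\psi^*m=m$, and the invariance $m(T^{-1}B)=m(B)$ follows from the Jacobian identity; the Gibbs bound for $m$ is (\ref{add10}) combined with the supremum and infimum bounds on $h$. For the variational principle in (6) I would apply the standard two-sided argument: the inequality $P(T,\varphi)\geq \nu(I_\nu(\mathcal{B}|T^{-1}\mathcal{B}))+\nu(\varphi)$ for $\nu\in\mathcal{M}(X,T)$ uses subadditivity of conditional information combined with the Jacobian identity, and equality at $\nu=m$ comes from $\psi$ having pressure $0$. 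For (7), positivity on open sets follows from backward density and the lower bound in (\ref{add10}); atoms are excluded because an atom at $x_0$ would, by $T$-invariance, propagate to every element of the backward orbit of $x_0$, contradicting finiteness of $m$ once (1e) is invoked.

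The main obstacle is part (4): establishing a genuine projective contraction for $\lambda^{-1}\mathcal{L}_\varphi$ in the countable-branch setting is delicate, since one must verify that the infinite series defining $\mathcal{L}_\varphi$ does not destroy Birkhoff's metric on the cone, and the expansion factor of $T^N$ together with the distortion control (1g) must combine into a uniform contraction ratio strictly less than $1$. A secondary, but genuine, nuisance is that $T$ is not defined on all of $\widehat X$, so at every step where one would like to integrate against $\mu$ on $\widehat X$ one must first verify that the relevant integrand vanishes, or is controlled, on $\widehat X\setminus X_0$; this is where the backward density (1e) and the admissibility of $(T,\varphi)$ are repeatedly used. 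Everything else reduces to careful bookkeeping with Theorem~\ref{thm1.1+}.
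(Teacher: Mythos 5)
Your proposal misses the central device of the paper's proof and contains a genuine gap in part (4).

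The paper does \emph{not} prove Theorem \ref{thm1.1} by re-deriving the Walters machinery directly for $T$. The hypotheses (1c*) and (1g) are placed only on $T^N$, so one cannot argue about $\lambda^{-1}\mathcal{L}_\varphi$ directly. Instead the paper first invokes Lemma \ref{lem2.2} to see that $(T^N,S_N\varphi)$ is admissible, observes that (1b) together with (1c*) for $T^N$ yields (1f), and concludes that $(T^N,S_N\varphi)$ is \emph{dynamically} admissible and satisfies (1c*). This lets it apply Theorem \ref{thm2.6} as a black box to the pair $(T^N,S_N\varphi)$ with eigenvalue $\lambda^N$ (note $\mathcal{L}_{S_N\varphi,T^N}^*\mu=(\mathcal{L}_\varphi^*)^N\mu=\lambda^N\mu$), producing $h>0$ with $\mathcal{L}^N_\varphi h=\lambda^N h$ and $\lambda^{-nN}\mathcal{L}_\varphi^{nN}(f)\rightrightarrows h\cdot\mu(f)$. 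The genuinely new step is then a short bootstrap: computing the limit of $\lambda^{-nN}\mathcal{L}_\varphi^{nN+1}(f)$ in two ways, as $\mathcal{L}_\varphi(\lambda^{-nN}\mathcal{L}_\varphi^{nN}(f))\rightrightarrows\mu(f)\mathcal{L}_\varphi(h)$ and as $\lambda^{-nN}\mathcal{L}_\varphi^{nN}(\mathcal{L}_\varphi f)\rightrightarrows\mu(\mathcal{L}_\varphi f)h=\lambda\mu(f)h$, forces $\mathcal{L}_\varphi h=\lambda h$ for the \emph{single} iterate, and interleaving the residue classes $nN+i$, $0\le i<N$, upgrades convergence along multiples of $N$ to full convergence. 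Your proposal never makes this reduction and never gives the interleaving argument; without it, nothing you establish for $T^N$ transfers to $T$.

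Two further concrete problems. First, within Theorem \ref{thm2.6} the paper obtains the eigenfunction $h$ via Schauder--Tychonoff on the convex set $\Lambda$ of functions satisfying the distortion inequality, whereas you propose to extract it by Arzel\`a--Ascoli from the sequence $h_n=\lambda^{-n}\mathcal{L}_\varphi^n(1)$; a subsequential limit of $h_n$ need not be a fixed point of $\lambda^{-1}\mathcal{L}_\varphi$, so you would at minimum need to pass to Ces\`aro averages or replace this step with a fixed-point argument. Second, and more seriously, your claim that $\lambda^{-1}\mathcal{L}_\varphi$ is a Birkhoff contraction ``thanks to the expansion of $T^N$'' conflates the operator with its $N$th power: only $\lambda^{-N}\mathcal{L}_\varphi^N$ could possibly contract, since $T$ itself is not assumed expanding, so exponential convergence would again come only along the subsequence $nN$. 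Moreover the paper deliberately avoids Birkhoff contraction altogether: its route to (4) is via Theorem \ref{thm2.4}, namely equicontinuity of $\{\mathcal{L}_\psi^n(f)\}$ together with the backward-density argument of Lemma \ref{lem2.3} to show every uniform subsequential limit must be constant, hence equal to $\mu(f)$. This softer oscillation argument is precisely what makes the countable-branch case tractable, and the paper even remarks explicitly that it does not know whether equicontinuity alone (without the density argument) suffices. Your (5)--(7) sketch is consistent with the paper's deductions once (2) and (4) are in hand.
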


This modifying version of the Walters results in \cite{Walters}
makes us be able to establish the results on thermodynamic formalism
of some transcendental meromorphic functions on $\mathbb{C}$ over
their Julia sets. Actually, a meromorphic function itself may not be
expanding over its Julia set, but the $N$th iterate of it may have
the strict expanding property, that is, satisfies (1c*) for some
$N$. In terms of Theorem \ref{thm1.1}, it is sufficient to know that
$(f,-s\log f^\times)$ is admissible over its Julia set where $s$ is
the Poincare exponent.

%As an application of Theorem \ref{thm1.1+} we can establish the
%following

%\begin{thm}\label{thm1.2}  Let $f(z)$ be a parabolic meromorphic
%function on the Riemann sphere. Then $f(z)$ has a $s$-conformal
%measure and $P(f,s)=0$.\end{thm}

%A sketched proof of Theorem \ref{thm1.2} will be stated in Section 4
%to explain the significance of the paper to some extent, while for
%the completed proof, please see \cite{Zheng}. Theorem \ref{thm1.2}
%for a parabolic rational function is due to Denker and Urbanski
%\cite{DenkerUrbanski}. But our method is different from theirs.

\section{Conformal Measures and Ruelle-type Theorem}
In this section, we develop main results in Walters \cite{Walters}
for our purpose. Let $T:X_0\rightarrow X$ be continuous and satisfy
(1a), (1b) and (1c). We first establish the transfer operator or the
Perron-Frobenius-Ruelle operator of $\mathcal{C}(\widehat{X})$ to
itself. For the case when $X_0=X=\widehat{X}$, this is trivial. Next
through the eigenvalue and eigenmeasure of the dual operator of the
transfer operator, we seek the desired conformal (and invariant as
well) measures and discuss the thermodynamic properties of the
measures. These types of results are known as Ruelle-type Theorem
(See \cite{Ruelle1}, \cite{Keane}, \cite{Walters3}, \cite{Fan} and
\cite{FanJiang}).

We make a remark on the conditions (1a), (1b) and (1c). (1a) follows
from (1b), but (1c) does not follows from (1b) if $N=\infty$. Every
branch of $T^{-1}$ is continuous on $B_X(x,\delta)$, but $T^{-1}$ on
$B_X(x,\delta)$ may have a countable number of branches and thus we
cannot generally confirm that (1c) holds under (1b). Below we give
$T$ a simple condition such that (1c) holds under (1b).

\begin{lem}\label{lem2.1}\ \ Let $T$ satisfy (1b) with $X=\widehat{X}$.
Assume that (*) for arbitrary $\varepsilon>0$, we have a $0<\eta\leq
\varepsilon$ such that for each $x\in X\setminus X_0$, $\partial
B(x,\eta)\subset X_0$. Then the inverse of $T$ is locally uniformly
continuous, that is, $T$ satisfies (1c).\end{lem}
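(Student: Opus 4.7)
My plan is to argue by contradiction. Suppose (1c) fails for some $\varepsilon_0>0$; extracting from the negation, there exist $\delta_n\downarrow 0$, $x_n\in X$, and points $y_n,y'_n\in X_0$ lying in a common injective component $C_n$ of $T^{-1}(B_X(x_n,\delta_n))$ with $d(y_n,y'_n)\ge\varepsilon_0$. Using compactness of $\widehat{X}$, I would pass to subsequences along which $x_n\to x_0$, $y_n\to y_*$, $y'_n\to y'_*$; since $T(y_n),T(y'_n)\in B_X(x_n,\delta_n)$, both also tend to $x_0$, while $d(y_*,y'_*)\ge\varepsilon_0$.

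Next, I would apply (1b) at $x_0$ to write $T^{-1}(B_X(x_0,\delta))=\bigsqcup_{j}A_j(x_0)$ with inverse branches $\sigma_j:B_X(x_0,\delta)\to A_j(x_0)$; for $n$ large, $C_n$ lies inside some $A_{j_n}(x_0)$. Two reductions then clear the way. Firstly, no value can be attained by $j_n$ infinitely often: along any constant subsequence $j_{n_k}=j$, continuity of $\sigma_j$ at $x_0$ combined with $T(y_{n_k}),T(y'_{n_k})\to x_0$ would force $y_{n_k},y'_{n_k}\to\sigma_j(x_0)$, contradicting $d(y_*,y'_*)\ge\varepsilon_0$; after a further extraction the $j_n$ are pairwise distinct. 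Secondly, $y_*\in X\setminus X_0$ (and by symmetry $y'_*\in X\setminus X_0$): if $y_*\in X_0$ then continuity of $T$ yields $T(y_*)=x_0$, hence $y_*\in A_k(x_0)$ for some $k$; since $A_k(x_0)$ is open, $y_n\in A_k(x_0)$ and therefore $j_n=k$ eventually, contradicting distinctness.

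The heart of the argument is then to deploy hypothesis (*). Choose $\varepsilon'>0$ with $\varepsilon'<\varepsilon_0/4$ and $\varepsilon'<d(y_*,y'_*)/2$, and take $\eta\le\varepsilon'$ given by (*) with $\partial B(y_*,\eta)\subset X_0$. For $n$ large, $y_n\in B(y_*,\eta)$ while $y'_n\notin\overline{B(y_*,\eta)}$. Since $C_n$ is homeomorphic via $T$ to the open ball $B_X(x_n,\delta_n)$, it is connected, so it cannot be partitioned by the disjoint open sets $B(y_*,\eta)$ and $\widehat{X}\setminus\overline{B(y_*,\eta)}$; therefore $C_n$ meets $\partial B(y_*,\eta)\subset X_0$ at some point $z_n$. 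Since $z_n\in C_n$ we have $T(z_n)\in B_X(x_n,\delta_n)\to x_0$, and compactness of $\partial B(y_*,\eta)$ produces a subsequential limit $z_n\to z_*\in\partial B(y_*,\eta)\subset X_0$; continuity of $T$ at $z_*$ gives $T(z_*)=x_0$. Hence $z_*\in A_k(x_0)$ for some $k$, forcing $z_n\in A_k(x_0)$ for large $n$, which contradicts $z_n\in A_{j_n}(x_0)$ with the $j_n$ pairwise distinct.

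The step I expect to be the main obstacle is the connectedness of $C_n$ needed for the sphere-crossing step: $C_n$ is homeomorphic to the small open ball $B_X(x_n,\delta_n)\subset\widehat{X}$, which is connected in the locally connected spaces the paper targets (in particular subsets of $\widehat{\mathbb{C}}$), but this would need to be stated as a mild hypothesis in full generality. A secondary bookkeeping subtlety is the careful extraction of subsequences that makes the indices $j_n$ pairwise distinct and places both $y_*,y'_*$ in $X\setminus X_0$ \emph{before} the condition (*) is brought to bear.
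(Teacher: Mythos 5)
Your argument is correct, and it rests on exactly the same mechanism as the paper's proof: hypothesis (*) supplies a sphere $\partial B(y_*,\eta)\subset X_0$ around an accumulation point $y_*\in X\setminus X_0$ of the components; a component of large diameter must cross that sphere; the crossing points accumulate (by compactness of the sphere) at a point $z_*\in\partial B(y_*,\eta)\subset X_0$ where $T$ is continuous with $T(z_*)=x_0$; and $z_*$ then sits inside a single open injective component over $B_X(x_0,\delta)$, contradicting the pairwise disjointness of the $A_j(x_0)$. The difference is purely organizational. The paper proves a direct finiteness claim (for $x_0\in X\setminus X_0$, the sphere $\partial B(x_0,\eta)$ meets only finitely many injective components), covers $X\setminus X_0$ by finitely many such balls, notes that all but finitely many components then lie in one of these balls and so have diameter $<2\eta<\varepsilon$, handles the remaining finitely many components by shrinking $\delta$ at $x$, and gets uniformity from compactness of $\widehat{X}$; you compress all of this into one contradiction with repeated subsequence extraction. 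Your route is shorter but discards the quantitative finiteness statement, which is the reusable content of the paper's version. As for the obstacle you flag: connectedness of the injective components is indeed used, but the paper's proof relies on it in precisely the same way (a component whose base point lies in $B(x_s,\eta)$ and which misses $\partial B(x_s,\eta)$ is asserted to lie entirely in $B(x_s,\eta)$), so your proof is no weaker than the paper's on this point; your two preliminary reductions (pairwise distinct indices $j_n$, and $y_*,y'_*\in X\setminus X_0$) are the correct and necessary bookkeeping before (*) can be invoked.
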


{\bf Proof.} We are arbitrarily given a $\varepsilon>0$. For a point
$x\in X$, let $T^{-1}_j (1\leq j\leq N\leq \infty)$ be the branch of
$T^{-1}$ of $B_X(x,\delta)$ onto $A_j(x)$.

We claim that for each $x_0\in X\setminus X_0$, $\partial B(x_0,
\eta)$ for $0<\eta<\varepsilon/2$ and $T^{-1}_j(B_X(x,\delta))$
intersect only for finitely many $j$. Suppose that fails and then
for a sequence $\{n_k\}$, $\partial B(x_0, \eta)\cap
T^{-1}_{n_k}(B_X(x,\delta))\not=\emptyset.$ From each of these
intersecting sets, take a point $z_{n_k}$ and so $T(z_{n_k})\in
B_X(x,\delta)$ and $z_{n_k}\in\partial B(x_0,\eta)\subset X_0$.
Since $\partial B(x_0,\eta)$ and $\overline{B_X(x,\delta)}$ are
compact, we can assume that $z_{n_k}\rightarrow z_0\in \partial
B(x_0,\eta)$ and $T(z_{n_k})\rightarrow w\in B_X(x,\delta)$ as
$k\rightarrow \infty$ (otherwise let us shrink $\delta$ a little
bit). Noting that $T$ is continuous at $z_0$, we have
$T(z_{n_k})\rightarrow T(z_0)=w$ as $k\rightarrow \infty$ and so
$z_0\in T^{-1}_{j_0}(B_X(x,\delta))$ for some $j_0$ and furthermore
for a $c>0$, $B(z_0,c)\cap X_0\subset T^{-1}_{j_0}(B_X(x,\delta))$.
This contradicts that $z_{n_k}\rightarrow z_0$ as $k\rightarrow
\infty$, because $T^{-1}_j(B_X(x,\delta))$ does not intersect each
other and so we have proved the claim.

We can take finitely many points $x_i\in X\setminus X_0\ (1\leq
i\leq M(\varepsilon))$ such that
$$X\setminus X_0\subseteq\cup_{i=1}^MB_X(x_i,\eta).$$
For $\forall\ x\in X$, all but at most finitely many $T^{-1}_j(x)$
lie in $\cup_{i=1}^MB_X(x_i,\eta).$ In terms of the claim, with the
possible exception of finitely many $j$, $T^{-1}_j(B_X(x,\delta))$
lies in one of $B_X(x_s,\eta) (1\leq s\leq M)$ and so $${\rm
diam}(T^{-1}_j(B_X(x,\delta)))<2\eta<\varepsilon.$$ Thus we can
choose a $0<\delta_x<\delta$ such that ${\rm
diam}(T^{-1}_j(B_X(x,\delta_x))))<\varepsilon$ for all $j$. Since
$X=\widehat{X}$ is compact, we have proved (1c).\qed

For the case of that $X_0=X=\widehat{X}$, a continuous surjection
$T:\widehat{X}\rightarrow\widehat{X}$ is a local homeomorphism, that
is to say, for $x\in\widehat{X}$ there exists an open neighborhood
$V(x)$ of $x$ such that $T(V(x))$ is open and $T: V(x)\rightarrow
T(V(x))$ is a homeomorphism, if and only if (1b) holds; And (1b)
implies (1c). These were proved by Eilenberg (See Page 31 of
\cite{AokiHiraide}).

\begin{thm}\label{thm2.1}\ \  Let $(T,\varphi)$ be admissible. Then
$\mathcal{L}_\varphi$ can be extended to a linear operator of
$C(\widehat{X})$ to itself, which is still denoted by
$\mathcal{L}_\varphi$, there exists a $\mu\in
\mathcal{M}(\widehat{X})$ such that
$\mathcal{L}_\varphi^*(\mu)=\lambda\mu,\
\lambda=\mathcal{L}_\varphi^*(\mu)(1)>0$, where
$\mathcal{L}_\varphi^*$ is the dual operator of
$\mathcal{L}_\varphi$, and the following statements hold:

(1) $\lambda\exp(-\varphi)$ is the Jacobian of $T$ with respect to
$\mu$;

(2) $\mu$ is positively nonsingular and nonsingular for $T$, that
is, $\mu\circ T\ll \mu$ and $\mu\circ T^{-1}\ll\mu$.
\end{thm}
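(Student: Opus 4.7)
I plan to proceed in four stages: extend $\mathcal{L}_\varphi$ to a bounded positive operator on $\mathcal{C}(\widehat X)$; produce the eigenmeasure by a Schauder--Tychonoff fixed-point argument; read off the Jacobian identity from the eigenvalue equation; and deduce the two nonsingularity statements.

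For the extension, the pointwise series $\mathcal{L}_\varphi(f)(x)=\sum_{T(y)=x}f(y)e^{\varphi(y)}$ is absolutely convergent with $|\mathcal{L}_\varphi(f)(x)|\le\|f\|M$, where $M=\sup_{x\in X}\sum_{T(y)=x}e^{\varphi(y)}<\infty$ by admissibility. The substantive task is to show that $\mathcal{L}_\varphi(f)$ is uniformly continuous on $X$ and hence extends continuously to the compact space $\widehat X$. Given $\varepsilon>0$, uniform continuity of $f$ on $\widehat X$ together with (1c) yields a $\delta_0$ making $|f(T^{-1}_y(x))-f(T^{-1}_y(x'))|<\varepsilon$ uniformly over all branches when $d(x,x')<\delta_0$; condition (1d) supplies $\delta_1$ with $\sum_{T(y)=x}|e^{\varphi(T^{-1}_y(x))}-e^{\varphi(T^{-1}_y(x'))}|<\varepsilon$ when $d(x,x')<\delta_1$. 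Using (1b) to match preimages of $x$ and $x'$ bijectively through the injective components $A_i(x)$, a triangle inequality then bounds $|\mathcal{L}_\varphi(f)(x)-\mathcal{L}_\varphi(f)(x')|$ by $\varepsilon(M+\|f\|)$. This step, which orchestrates all of (1a)--(1d) simultaneously, is where I expect the principal technical effort.

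Once $\mathcal{L}_\varphi$ is a positive operator on $\mathcal{C}(\widehat X)$, the map $N(\nu)=\mathcal{L}_\varphi^*(\nu)/\mathcal{L}_\varphi^*(\nu)(1)$ is well defined on $\mathcal{M}(\widehat X)$: the denominator equals $\nu(\mathcal{L}_\varphi(1))>0$, since $\mathcal{L}_\varphi(1)(x)=\sum_{T(y)=x}e^{\varphi(y)}$ is strictly positive (each $A_i(x)$ in (1b) contains a preimage of $x$). Moreover $N$ is weak-$*$ continuous, and $\mathcal{M}(\widehat X)$ is weak-$*$ convex and compact, so Schauder--Tychonoff produces a fixed point $\mu$ with $\mathcal{L}_\varphi^*(\mu)=\lambda\mu$ and $\lambda=\mathcal{L}_\varphi^*(\mu)(1)>0$. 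For (1), the identity $\int\mathcal{L}_\varphi(f)\,d\mu=\lambda\int f\,d\mu$ extends from $\mathcal{C}(\widehat X)$ to bounded nonnegative Borel $f$ by a monotone class argument. Apply it to the truncations $f_n=\min(e^{-\varphi},n)\mathbf{1}_A$ for a Borel $A\subset X_0$ on which $T$ is injective: injectivity collapses $\mathcal{L}_\varphi(f_n)(x)$ to the single term from the unique $y\in A\cap T^{-1}(x)$, which increases to $\mathbf{1}_{T(A)}(x)$. Monotone convergence then gives $\mu(T(A))=\lambda\int_A e^{-\varphi}\,d\mu$, so $\lambda e^{-\varphi}$ is the Jacobian.

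For (2), positive nonsingularity follows by covering $X_0$ by the countable family $\{A_i(x_n)\}$ coming from (1b) applied at a countable dense $\{x_n\}\subset X$, and invoking the Jacobian identity on each piece: a $\mu$-null $B\subset X_0$ splits into null pieces each sent to a set of $\mu$-measure $\lambda\int_{B\cap A_i(x_n)}e^{-\varphi}\,d\mu=0$. Backward nonsingularity follows from the computation $\mathcal{L}_\varphi(\mathbf{1}_{T^{-1}B})(x)=\mathbf{1}_B(x)\sum_{T(y)=x}e^{\varphi(y)}\le M\mathbf{1}_B(x)$, which integrates to $\lambda\mu(T^{-1}B)\le M\mu(B)$, so $\mu(B)=0$ forces $\mu(T^{-1}B)=0$. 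The main obstacle is the uniform-continuity step that produces the extension of $\mathcal{L}_\varphi$ to $\mathcal{C}(\widehat X)$; everything downstream is a formal consequence of having a weak-$*$ continuous positive operator with strictly positive $\mathcal{L}_\varphi(1)$.
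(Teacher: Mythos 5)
Your overall architecture matches what the paper intends: the paper gives no proof of Theorem \ref{thm2.1} beyond remarking that (1) and (2) are formal consequences of $\mathcal{L}_\varphi^*(\mu)=\lambda\mu$ and that the only new ingredient relative to Walters is the extension of $\mathcal{L}_\varphi$ to $\mathcal{C}(\widehat{X})$ under (1c) in place of (1c*). Your extension step is exactly the intended use of (1c) and (1d) (split $|f(y)e^{\varphi(y)}-f(y')e^{\varphi(y')}|\le|f(y)-f(y')|e^{\varphi(y)}+\|f\|\,|e^{\varphi(y)}-e^{\varphi(y')}|$ over matched branches), and the derivations of the Jacobian identity and of the two nonsingularity statements from the eigenvalue equation are the standard ones and are what the paper's remark alludes to.

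There is, however, a genuine gap in the Schauder--Tychonoff step. You divide by $\mathcal{L}_\varphi^*(\nu)(1)=\nu(\mathcal{L}_\varphi(1))$ and justify its positivity by the positivity of $\mathcal{L}_\varphi(1)$ \emph{on $X$}. But $\mathcal{M}(\widehat{X})$ contains measures carried entirely by $\widehat{X}\setminus X$, and admissibility gives only an \emph{upper} bound on $\mathcal{L}_\varphi(1)$; its continuous extension to $\widehat{X}$ may vanish on $\widehat{X}\setminus X$. For instance, $\widehat{X}=[0,1]$, $X=X_0=(0,1)$, $T=\mathrm{id}$, $\varphi(y)=\log y$ is admissible, yet $\mathcal{L}_\varphi(1)(x)=x\to 0$ as $x\to 0$, so $N$ is undefined at the point mass at $0$ (indeed $\mathcal{L}_\varphi^*$ annihilates it). Hence $N$ is not a self-map of the compact convex set $\mathcal{M}(\widehat{X})$ and the fixed-point theorem does not apply as written; the obvious regularization $\nu\mapsto(\mathcal{L}_\varphi^*\nu+\varepsilon\nu)/(\nu(\mathcal{L}_\varphi(1))+\varepsilon)$ restores well-definedness but its fixed point can have $\lambda=\mu(\mathcal{L}_\varphi(1))=0$, which the theorem excludes. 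The paper itself is aware that a lower bound on $\mathcal{L}_\varphi(1)$ is not automatic: it proves $\inf\mathcal{L}_\varphi(1)>0$ only under the extra hypotheses of Lemma \ref{lem2.4} (namely (\ref{2.3+}) or $X=\widehat{X}$). You need either such a lower bound on all of $\widehat{X}$ (after which your argument closes) or some other device guaranteeing $\lambda>0$. A secondary, smaller issue: extending $\mu(\mathcal{L}_\varphi f)=\lambda\mu(f)$ from $\mathcal{C}(\widehat{X})$ to Borel $f$ requires a convention for integrating $\mathcal{L}_\varphi f$ (defined only on $X$) against a $\mu$ that may charge $\widehat{X}\setminus X$; the paper rules out $\mu(\partial X)>0$ only later (Lemma \ref{thm2.2}) and only under backward density, so you should either work with $\mu$ restricted to $X$ or address this point explicitly.
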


Sometimes, we write $\mu_\varphi$ for $\mu$ and $\lambda_\varphi$
for $\lambda$ in Theorem \ref{thm2.1}. We remark that the results
(1) and (2) in Theorem \ref{thm2.1} follow from the formula
$\mathcal{L}_\varphi^*(\mu)=\lambda\mu$. Although the first result
in Theorem \ref{thm2.1} is new, Theorem \ref{thm2.1} is essentially
due to Walters \cite{Walters}, while Walters obtained the result
with the condition (1c) replaced by that $T$ does not decrease any
distance on every injective component of $T^{-1}$ over
$B_X(x,\delta)$ for each $x\in X$, that is, (1c*). It is obvious
that the condition (1c) can be derived from the Walters' condition
(1c*).

We first consider the existence of $P(T,\varphi)$. We recall that a
continuous map $T:X_0\rightarrow X$ satisfying (1e) is called
(topologically) backward dense. "Topological backward dense" has
something to do with topological transitive, exact and mixing. In
fact, "Topological backward dense" is equivalent to that $(\dag)$
for any open set $U$ of $X$, there exists a $N$ such that $T^N(U\cap
T^{-N}(X))=X$. Let us prove that. Assume "Topological backward
dense". Take a ball $B(a,\varepsilon)\subset U$. There exists a $N$
such that $\forall\ x\in X$, $T^{-N}(x)\cap
B(a,\varepsilon)\not=\emptyset.$ This implies that
$X=T^N(B(a,\varepsilon)\cap T^{-N}(X))\subseteq T^N(U\cap
T^{-N}(X))\subseteq X,$ and so $T^N(U\cap T^{-N}(X))= X.$
Conversely, assume $(\dag)$. For any $\varepsilon>0$, since
$\widehat{X}$ is compact, we have
$X=\cup_{j=1}^qB_X(x_j,\varepsilon/2)$ and therefore, there exists a
$N$ such that $T^N(B_X(x_j,\varepsilon/2)\cap T^{-N}(X))=X\ (1\leq
j\leq q)$. This yields that $\forall x\in X$, we can take a point
$y_j\in T^{-N}(x)\cap B(x_j,\varepsilon/2)\not=\emptyset$ for each
$j$. Certainly, $B(x_j,\varepsilon/2)\subset B(y_j,\varepsilon)$ and
so $X=\cup_{j=1}^qB_X(y_j,\varepsilon)$, that is, $T^{-N}(x)$ is
$\varepsilon$-dense in $X$.

\begin{thm}\label{6add}\ \ Let $T:X_0\rightarrow X$ satisfy (1a) and
$\varphi\in\mathcal{C}(X_0)$. Assume that (\ref{1add}) holds for a
$\delta>0$ and $T$ is a backward $\delta$-dense. If for an $a\in X$,
$\overline{P_a}(T,\varphi)<\infty$, then the pressure function
$P(T,\varphi)$ exists.
\end{thm}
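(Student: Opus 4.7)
The plan is to combine three ingredients: a bounded-distortion estimate comparing $\mathcal{L}^n_\varphi(1)$ at nearby points, supplied by (\ref{1add}); a finite covering of $X$ obtained from backward $\delta$-density together with compactness of $\widehat{X}$; and Fekete's subadditivity lemma applied to $A_n:=\log\sup_{x\in X}\mathcal{L}^n_\varphi(1)(x)$. I would begin with the distortion bound: if $d(x,x')<\delta$, then
\[
e^{-K_n}\mathcal{L}^n_\varphi(1)(x')\leq \mathcal{L}^n_\varphi(1)(x)\leq e^{K_n}\mathcal{L}^n_\varphi(1)(x').
\]
Indeed every $y\in T^{-n}(x)$ lies in some component $U$ of $T^{-n}(B_X(x,\delta))$; since $T^n(U)=B_X(x,\delta)\ni x'$, the component $U$ also contains a preimage $y'$ of $x'$, and (\ref{1add}) gives $|S_n\varphi(y)-S_n\varphi(y')|\leq K_n$. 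Pairing preimages component by component yields the claim.

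Next, fix $m_0$ such that $T^{-m_0}(a)$ is $\delta$-dense in $X$. Using compactness of $\widehat{X}$, cover $X$ by finitely many balls $B_X(x_1,\delta/2),\dots,B_X(x_K,\delta/2)$ and pick $z_i\in T^{-m_0}(a)\cap B_X(x_i,\delta/2)$. For arbitrary $x\in X$ there is an index $i$ with $d(x,x_i)<\delta/2$, hence $d(x,z_i)<\delta$, and the distortion bound yields $\mathcal{L}^n_\varphi(1)(x)\leq e^{K_n}\mathcal{L}^n_\varphi(1)(z_i)$. Retaining only the $i$th term in
\[
\mathcal{L}^{n+m_0}_\varphi(1)(a)=\sum_{T^{m_0}(z)=a}e^{S_{m_0}\varphi(z)}\mathcal{L}^n_\varphi(1)(z)
\]
gives $\mathcal{L}^n_\varphi(1)(z_i)\leq e^{-S_{m_0}\varphi(z_i)}\mathcal{L}^{n+m_0}_\varphi(1)(a)$, so
\[
\sup_{x\in X}\mathcal{L}^n_\varphi(1)(x)\leq M\,e^{K_n}\mathcal{L}^{n+m_0}_\varphi(1)(a),\qquad M:=\max_{1\leq i\leq K}e^{-S_{m_0}\varphi(z_i)}<\infty.
\]

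Finally, the inequality $\mathcal{L}^{n+m}_\varphi(1)=\mathcal{L}^m_\varphi(\mathcal{L}^n_\varphi(1))\leq\bigl(\sup_{z}\mathcal{L}^n_\varphi(1)(z)\bigr)\mathcal{L}^m_\varphi(1)$ forces $A_{n+m}\leq A_n+A_m$, so Fekete's lemma supplies $L:=\lim_n A_n/n$. On the one hand $\mathcal{L}^n_\varphi(1)(a)\leq e^{A_n}$ gives $\overline{P_a}(T,\varphi)\leq L$; on the other hand the uniform bound of the previous paragraph, together with $K_n/n\to 0$, forces $\underline{P_a}(T,\varphi)\geq L$. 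Hence $P_a(T,\varphi)=L$ exists, and independence of the basepoint---so that $P(T,\varphi)$ is well-defined---is obtained by rerunning the finite-cover argument with an arbitrary $x$ in place of $a$, using that $T^{-m_0}(x)$ is also $\delta$-dense. The main obstacle is the uniform bound in the second paragraph: a priori $e^{-S_{m_0}\varphi(z)}$ can blow up along the possibly countable set $T^{-m_0}(a)$, so backward density applied point by point is not enough; it is the finite subcover of $X$ that restricts attention to the finite list $\{z_i\}$ and forces $M<\infty$.
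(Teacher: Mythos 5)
Your proposal is essentially the paper's own argument: the two ingredients are identical (the comparison of $\mathcal{L}^n_\varphi(1)$ at nearby points coming from (\ref{1add}), and a finite cover of $\widehat{X}$ combined with $\delta$-density of $T^{-m_0}(a)$ to dominate $\sup_{x}\mathcal{L}^n_\varphi(1)(x)$ by a constant times $e^{K_n}\mathcal{L}^{n+m_0}_\varphi(1)(a)$, which is finite because $\overline{P_a}(T,\varphi)<\infty$). Your only real departure is cosmetic but pleasant: you get exact subadditivity for $A_n=\log\sup_x\mathcal{L}^n_\varphi(1)(x)$ and invoke Fekete, whereas the paper works at the single point $a$ with the approximate subadditivity $a_{n+m}\leq a_n+a_m+K_m+C$ and handles it by hand; your version also makes basepoint-independence transparent, since $A_n$ does not involve the basepoint, while the paper re-runs its estimate to compare $P_x$ with $P_a$.

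One step needs repair as written: you pick $z_i\in T^{-m_0}(a)\cap B_X(x_i,\delta/2)$, but backward $\delta$-density only guarantees a point of $T^{-m_0}(a)$ within $\delta$ of $x_i$, so that intersection may be empty. The fix is to do what the paper does in (\ref{7add}): cover $\widehat{X}$ by balls $B(x_i,\delta)$, choose $z_i\in T^{-m_0}(a)$ with $d(z_i,x_i)<\delta$, and apply (\ref{1add}) to the components of $T^{-n}(B_X(x_i,\delta))$ (centered at the covering point $x_i$, not at $x$); each such component contains preimages of both $x$ and $z_i$, which gives $\mathcal{L}^n_\varphi(1)(x)\leq e^{K_n}\mathcal{L}^n_\varphi(1)(z_i)$ exactly as you need, without ever requiring $d(x,z_i)<\delta$. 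Your "pairing component by component" is stated at the same level of detail as the paper's own step (note $T^n$ need not be injective on a component, so the assignment $y\mapsto y'$ is implicit there too), so nothing is lost relative to the source; and you should record, as the paper does, that $\overline{P_a}(T,\varphi)<\infty$ is what makes $\mathcal{L}^{n+m_0}_\varphi(1)(a)$, and hence $A_n$, finite before Fekete is applied.
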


{\bf Proof.}\ \ Since $\widehat{X}$ is compact, there exist finitely
many points $x_i\in X (i=1,2,...,q)$ such that
$\widehat{X}=\cup_{i=1}^qB(x_i,\delta)$. In view of (1e) for
$\delta$, for some $p$, $f^{-p}(a)$ is $\delta$-dense in $X$. Since
$\overline{P_a}(T,\varphi)<\infty$, for all $n$,
$\mathcal{L}^n_\varphi(a)$ is finite. For each $i$ take a point
$\widehat{x}_i\in T^{-p}(a)\cap B_X(x_i,\delta)$. Set
$A_p=\max\{-S_p\varphi(\widehat{x}_i):1\leq i\leq q\}$.

Take $\forall\ m\in\mathbb{N}$ and $\forall\ x\in X$. Then $x\in
B_X(x_i,\delta)$ for some fixed $i$. $\forall\ y\in T^{-m}(x)$,
$\exists\ y'\in T^{-m}(\widehat{x}_i)$ such that $y, y'$ are in a
component of $T^{-m}(B_X(x_i,\delta))$. Then in view of
(\ref{1add}), we have $|S_m\varphi(y)-S_m\varphi(y')|\leq K_m$ and
\begin{eqnarray}\label{7add}
\mathcal{L}_\varphi^{m+p}(1)(a)&=&\sum_{T^p(w)=a}\sum_{T^m(y)=w}e^{S_p\varphi(w)}e^{S_m\varphi(y)}\nonumber\\
&\geq&e^{S_p\varphi(\widehat{x}_i)}\sum_{T^m(y')=\widehat{x}_i}e^{S_m\varphi(y')}\nonumber\\
&\geq&e^{-A_p-K_m}\mathcal{L}_\varphi^m(1)(x).\end{eqnarray} In
particular, we have $\mathcal{L}_\varphi^p(1)(x)\leq
e^{A_p+K_p}\mathcal{L}_\varphi^{2p}(1)(a),\ \forall\ x\in X.$ Thus
we have
$$\mathcal{L}_\varphi^{m+p}(1)(a)=\sum_{T^m(w)=a}e^{S_m\varphi(w)}\mathcal{L}_\varphi^p(1)(w)\leq
e^{A_p+K_p}\mathcal{L}_\varphi^{2p}(1)(a)\mathcal{L}_\varphi^m(1)(a).$$

For $\forall\ n, m\in \mathbb{N}$ with $n\geq m$, we have
\begin{eqnarray*}
\mathcal{L}^{n+m}_\varphi(1)(a)&=&\sum_{T^n(w)=a}e^{S_n\varphi(w)}\mathcal{L}^m_\varphi(1)(w)\\
&\leq&
e^{A_p+K_m}\mathcal{L}_\varphi^{m+p}(1)(a)\mathcal{L}_\varphi^n(1)(a)\\
&\leq&e^{2A_p+K_p+K_m}\mathcal{L}_\varphi^{2p}(1)(a)\mathcal{L}_\varphi^n(1)(a)\mathcal{L}_\varphi^m(1)(a).
\end{eqnarray*}

Set $a_n=\log\mathcal{L}^n_\varphi(1)(a)$. The above inequality
implies that for $m\leq n$, we have $$a_{n+m}\leq a_n+a_m+K_m+C,$$
where $C=2A_p+K_p+\log\mathcal{L}_\varphi^{2p}(1)(a)$. For any fixed
$m$, we can write $n=km+i$ with $0\leq i<m$. Thus
\begin{eqnarray*}\frac{a_n}{n}&\leq &\frac{a_{km}}{km}+\frac{a_i+K_i+C}{n}\\
&\leq &\frac{ka_{m}+kK_m+kC}{km}+\frac{a_i+K_i+C}{n},
\end{eqnarray*}
and
$$\limsup_{n\rightarrow\infty}\frac{a_n}{n}\leq
\frac{a_{m}}{m}+\frac{K_m}{m}+\frac{C}{m},\ \forall\
m\in\mathbb{N}$$ so that
$$\limsup_{n\rightarrow\infty}\frac{a_n}{n}\leq
\liminf_{m\rightarrow\infty}\frac{a_{m}}{m}.$$ This implies that
$P_a(T,\varphi)$ exists. In view of (\ref{7add}), $\forall\ x\in X$,
$\overline{P_x}(T,\varphi)\leq P_a(T,\varphi)<\infty$. Thus the
above argument yields that $P_x(T,\varphi)$ exists and
$P_x(T,\varphi)\geq P_a(T,\varphi)$, and therefore, $P_x(T,\varphi)=
P_a(T,\varphi),\ \forall\ x\in X.$ \qed

Now let us consider the possible relation between the eigenvalue
$\lambda$ of the transfer operator $\mathcal{L}^*_\varphi$ and the
pressure $P(T,\varphi)$. To the end, we consider the iterates of
$\mathcal{L}_\varphi$ and $\mathcal{L}_\varphi^*$.

\begin{lem}\label{lem2.2} Let $(T,\varphi)$ be admissible. Then for
each fixed positive integer $N$, $(T^N, S_N\varphi)$ is
admissible.\end{lem}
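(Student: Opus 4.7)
The plan is to verify each defining condition of admissibility---(1a), (1b), (1c), (1d), and summability---for $(T^N, S_N\varphi)$, proceeding by induction on $N$, with the base case $N=1$ being the hypothesis. The easy half comes first: (1a) follows from $T^{-N}(x) = \bigcup_{z \in T^{-(N-1)}(x)} T^{-1}(z)$ being a countable union of countable sets, and summability reduces to the uniform bound $\sup_x \mathcal{L}_\varphi^N(1)(x) < \infty$, which follows from $M := \|\mathcal{L}_\varphi(1)\|_\infty < \infty$ via the iteration $\mathcal{L}_\varphi^N(1) \le M^{N-1}\,\mathcal{L}_\varphi(1)$.

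Conditions (1b) and (1c) I would handle together. Given (1c) at level $N-1$ (inductive hypothesis), choose $\delta_N \le \delta_{N-1}$ small enough that every injective component $B'$ of $T^{-(N-1)}(B_X(x,\delta_N))$ has diameter less than the base radius $\delta$ from (1b) for $T$. Then $B' \subset B_X(y_0,\delta)$ for any $y_0 \in B'$, so the injective components $A_i(y_0)$ of $T^{-1}(B_X(y_0,\delta))$ restrict to pieces $(T|_{A_i(y_0)})^{-1}(B')$ of $T^{-1}(B')$ on each of which $T$ is a homeomorphism onto $B'$; composing with the inductive homeomorphism $T^{N-1}|_{B'} \to B_X(x,\delta_N)$ gives (1b) for $T^N$ with radius $\delta_N$. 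For (1c) at level $N$: given $\varepsilon$, pick $\delta^*$ by (1c) for $T$ so that components of $T^{-1}$ over balls of radius $\delta^*$ have diameter less than $\varepsilon$, then shrink the radius at level $N$ so that (1c) for $T^{N-1}$ forces $\mathrm{diam}(T(C)) < \delta^*$ for every component $C$ at level $N$; then $T(C)$ fits in a single ball of radius $\delta^*$ and $C$ lies in a single component of $T^{-1}$ over that ball, giving $\mathrm{diam}(C) < \varepsilon$.

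For (1d) I would use the telescope $e^{S_N\varphi(y)} = e^{\varphi(y)}\,e^{S_{N-1}\varphi(Ty)}$, yielding
$$\bigl|e^{S_N\varphi(y)}-e^{S_N\varphi(y')}\bigr| \le e^{\varphi(y)}\bigl|e^{S_{N-1}\varphi(Ty)}-e^{S_{N-1}\varphi(Ty')}\bigr| + e^{S_{N-1}\varphi(Ty')}\bigl|e^{\varphi(y)}-e^{\varphi(y')}\bigr|,$$
with $y' = T^{-N}_y(x')$. Summing over $y \in T^{-N}(x)$ and regrouping via $T^{-N} = T^{-1}\circ T^{-(N-1)}$ (so $z := Ty$ runs over $T^{-(N-1)}(x)$ and the matched $z' := Ty'$ equals $T^{-(N-1)}_z(x')$), the first piece becomes $\sum_z \mathcal{L}_\varphi(1)(z)\,|e^{S_{N-1}\varphi(z)}-e^{S_{N-1}\varphi(z')}| \le M \cdot (\text{the (1d) sum for } (T^{N-1}, S_{N-1}\varphi))$, which vanishes as $d(x,x')\to 0$ by the inductive hypothesis. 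The second becomes $\sum_z e^{S_{N-1}\varphi(z')}\sum_{y\in T^{-1}(z)}|e^{\varphi(y)}-e^{\varphi(y')}|$; by (1c) for $T^{N-1}$, $d(z,z')$ is uniformly small when $d(x,x')$ is, so (1d) for $(T,\varphi)$ bounds the inner sum uniformly, while the outer sum is $\mathcal{L}_\varphi^{N-1}(1)(x') \le M^{N-1}$.

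The step I expect to be most delicate is the component decomposition in (1b): confirming that $\{(T|_{A_i(y_0)})^{-1}(B')\}_i$ genuinely exhausts the injective components of $T^{-1}(B')$ (not merely providing an open cover), which relies on disjointness of the $A_i(y_0)$ and the homeomorphism property on each. A secondary subtlety is the branch matching used in (1d)---namely that the branch of $T^{-N}$ at $x$ sending $x$ to $y$ factors as the branch of $T^{-1}$ at the intermediate point $Ty$ composed with the branch of $T^{-(N-1)}$ at $x$---but this is immediate from the factorization $T^{-N} = T^{-1}\circ T^{-(N-1)}$ of branched inverses.
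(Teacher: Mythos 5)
Your proposal is correct and follows essentially the same route as the paper: the same $K^N$ (your $M^{N-1}$) bound for summability, and the same inductive verification of (1d) via the factorization $S_N\varphi(y)=\varphi(y)+S_{N-1}\varphi(Ty)$, a two-term triangle-inequality split, and the use of (1c) to force $d(z,z')\to 0$. The only difference is that you spell out the component decomposition underlying (1b) and (1c) for $T^N$, which the paper simply declares obvious.
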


{\bf Proof.}\ It is obvious that $T^N$ satisfies (1a), (1b) and (1c)
for some $\delta_N>0$ in the place of $\delta$. Here we first of all
check that $S_N\varphi$ is summable on $X$ for $T^N$. Set
$$K=\sup\left\{\sum_{T(y)=x}\exp(\varphi(y)):\ x\in
X\right\}<+\infty.$$ Then for each $x\in X$, we have
\begin{eqnarray*}
\sum_{T^N(y)=x}\exp(S_N\varphi(y))&=&\sum_{T^{N-1}(w)=x}\exp(S_{N-1}\varphi(w))
\sum_{T(y)=w}\exp(\varphi(y))\\
&\leq& K\sum_{T^{N-1}(w)=x}\exp(S_{N-1}\varphi(w))\leq
K^N.\end{eqnarray*}

Next we check (1d) for $(T^N, S_N\varphi)$, that is,
$$\sum_{T^N(y)=x}\left|\exp(S_N\varphi(T^{-N}_y(x)))-\exp(S_N\varphi(T^{-N}_y(x')))\right|\rightarrow
0,\ {\rm as}\ d(x,x')\rightarrow 0,$$ here $T^{-N}_y$ is the branch
of $T^{-N}$ on $B(x,\delta_N)$ which sends $x$ to $y$ and $\delta_N$
is determined in (1b) for $T^N$. Let us prove it by induction. We
assume that the result holds for $N$ and consider the case $N+1$. We
introduce some notations: for a pair $y$ and $w$ with $T^j(y)=w$,
$T^{-j}_{y,w}$ is the branch of $T^{-j}$ sending $w$ to $y$. For any
pair $x, x'\in X$ with $d(x,x')<\delta_{N+1}$, we have
$$\sum_{T^{N+1}(y)=x}\left|e^{S_{N+1}\varphi(y)}-e^{S_{N+1}\varphi(y')}\right|
=\sum_{T^{N}(w)=x}\sum_{T(y)=w}\left|e^{S_{N}\varphi(w)+\varphi(y)}-e^{S_{N}\varphi(w')+\varphi(y')}\right|$$
\begin{eqnarray*}&\leq &
\sum_{T^{N}(w)=x}\sum_{T(y)=w}\left|e^{S_{N}\varphi(w)+\varphi(y)}-e^{S_{N}\varphi(w)+\varphi(y')}\right|\\
&+&\sum_{T^{N}(w)=x}\sum_{T(y)=w}\left|e^{S_{N}\varphi(w)+\varphi(y')}-e^{S_{N}\varphi(w')+\varphi(y')}\right|\\
&\leq &\sum_{T^{N}(w)=x}e^{S_{N}\varphi(w)}\sum_{T(y)=w}\left|e^{\varphi(y)}-e^{\varphi(y')}\right|\\
&+&\sum_{T^{N}(w)=x}\left|e^{S_{N}\varphi(w)}-e^{S_{N}\varphi(w')}\right|\sum_{T(y)=w}e^{\varphi(y')}\\
&\leq &
K^N\sup_{T^N(w)=x}\sum_{T(y)=w}\left|e^{\varphi(y)}-e^{\varphi(y')}\right|\\
&+&K\sum_{T^{N}(w)=x}\left|e^{S_{N}\varphi(w)}-e^{S_{N}\varphi(w')}\right|
\rightarrow 0\end{eqnarray*} as $d(x,x')\rightarrow 0$ and so
$d(w,w')\rightarrow 0$, where $w'=T^{-N}_w(x')$ with $T^N(w)=x$ and
$y'=T^{-N-1}_y(x')$ and $T^{-N-1}_y=T^{-1}_{y,w}\circ T^{-N}_w$.
Lemma \ref{lem2.2} is proved. \qed

Under the assumption of Theorem \ref{thm2.1}, in terms of Lemma
\ref{lem2.2}, the $n$th iterates of $\mathcal{L}_\varphi$ and of
$\mathcal{L}^*_\varphi$ exist and so we have
$$\lambda^n={\mathcal{L}^*}^n_\varphi(\mu)(1)=\mu(\mathcal{L}_\varphi^n(1))$$
and therefore noting that $\mathcal{L}_\varphi(1)\in
\mathcal{C}(\widehat{X})$ implies that
\begin{equation}\label{2.2+}\inf_{x\in
X}\{\mathcal{L}^n_\varphi(1)(x)\}\leq\lambda^n\leq \sup_{x\in
X}\{\mathcal{L}^n_\varphi(1)(x)\}.\end{equation}

%\begin{lem}\label{thm2.5} Let $T, \varphi,\lambda$ and $\mu$ be as
%in Theorem \ref{thm2.1}.
Obviously, the following condition is enough to confirm that
$\log\lambda=P(T,\varphi)$: there exist a sequence of positive
number $\{K_n\}$ with $K_n/n\rightarrow 0$ as $n\rightarrow\infty$
such that for any pair $x, x'\in X$,
\begin{equation}\label{2.2}e^{-K_n}\mathcal{L}^n_\varphi(1)(x')\leq
\mathcal{L}^n_\varphi(1)(x)\leq
e^{K_n}\mathcal{L}^n_\varphi(1)(x').\end{equation}

Actually, the inequality (\ref{2.2}) holds if (\ref{1add}) holds
whenever $y$ and $y'$ are in a component of $T^{-n}(B_X(x,\delta)),
\forall x\in X$ with $X$ connected, that is, for any two $x, x'\in
X$, there exist finitely many $x_i,\ i=0,1,...,q$ with $x_0=x$ and
$x_m=x$ such that $B_X(x_i,\delta/2)\cap
B_X(x_{i+1},\delta/2)\not=\emptyset$. This is a way, in view of the
connected property of $X$, to go from local property in (\ref{1add})
to whole property in (\ref{2.2}). Another way to realize the step is
the backward dense.

{\bf The Proof of Theorem \ref{thm1.1+}} is completed by Theorem
\ref{thm2.1} and the following Lemma \ref{2add} and Lemma
\ref{add11}.

\begin{lem}\label{2add} \ Let $T, \varphi,\lambda$ and $\mu$ be as
in Theorem \ref{thm2.1}. Then

(1) if (\ref{1add}) holds, we have
\begin{equation}\label{3add}\log\lambda=\max\{\overline{P_x}(T,\varphi): x\in X\ {\rm such\ that}\
\mu(B(x,\delta))>0\};\end{equation}

(2) if, in addition, $T$ is backward $\delta$-dense, we have
$\log\lambda=P(T,\varphi).$ Furthermore, $\lambda$ is unique
eigenvalue of
$\mathcal{L}_\varphi^*:\mathcal{M}(\widehat{X})\rightarrow
\mathcal{M}(\widehat{X}).$
\end{lem}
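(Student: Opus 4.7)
The plan is to compute $\lambda^n=\mu(\mathcal{L}_\varphi^n(1))$ by integration against $\mu$ and to use condition (\ref{1add})---combined with the iterated covering property of Lemma~\ref{lem2.2}---to compare $\mathcal{L}_\varphi^n(1)(x)$ at different points of a common ball of radius $\delta$. For any $x_0\in X$ and any $x,x'\in B_X(x_0,\delta)$, the components of $T^{-n}(B_X(x_0,\delta))$ induce a canonical bijection between $T^{-n}(x)$ and $T^{-n}(x')$ pairing $y$ with $y'$ in the same component, and (\ref{1add}) gives $|S_n\varphi(y)-S_n\varphi(y')|\le K_n$; summing over this bijection yields
\[e^{-K_n}\mathcal{L}_\varphi^n(1)(x)\le \mathcal{L}_\varphi^n(1)(x')\le e^{K_n}\mathcal{L}_\varphi^n(1)(x).\]

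For the lower bound in (1), fix any $x_0$ with $\mu(B(x_0,\delta))>0$ and integrate the previous comparison over that ball to obtain $\lambda^n\ge e^{-K_n}\mathcal{L}_\varphi^n(1)(x_0)\mu(B(x_0,\delta))$; taking $\frac{1}{n}\log$ and $\limsup$, using $K_n/n\to 0$, gives $\log\lambda\ge\overline{P_{x_0}}(T,\varphi)$. For the upper bound, cover the compact space $\widehat{X}$ by finitely many balls $B(x_i,\delta)$, $1\le i\le q$, and take a Borel partition $\{B_i\}$ subordinated to this cover with $B_i\subseteq B(x_i,\delta)$; then
\[\lambda^n=\sum_{i=1}^q\int_{B_i}\mathcal{L}_\varphi^n(1)\,d\mu\le e^{K_n}\sum_{i:\mu(B_i)>0}\mathcal{L}_\varphi^n(1)(x_i)\mu(B_i).\]
The elementary identity $\limsup_n\tfrac1n\log\sum_{i=1}^q a_n^{(i)}=\max_i\limsup_n\tfrac1n\log a_n^{(i)}$ for finitely many positive sequences (which follows from $\max\le\sum\le q\max$ together with a pigeonhole extraction of the maximizing index along a subsequence) then yields $\log\lambda\le\max_{i:\mu(B_i)>0}\overline{P_{x_i}}(T,\varphi)$. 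Since $\mu(B_i)>0$ forces $\mu(B(x_i,\delta))>0$, combining with the lower bound produces (\ref{3add}); the supremum is attained among the finitely many $x_i$, so it is indeed a maximum.

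For part (2), the lower bound in (1) already shows $\overline{P_{x_0}}(T,\varphi)\le\log\lambda<\infty$ for some $x_0\in X$ (such an $x_0$ exists because $\mu$ is a probability measure on a compact space, so some ball of radius $\delta$ carries positive $\mu$-mass). Applying Theorem~\ref{6add} with $a=x_0$, the pressure $P(T,\varphi)$ exists and is independent of the base point, so $\overline{P_x}(T,\varphi)=P(T,\varphi)$ for every $x\in X$; combined with part (1) this gives $\log\lambda=P(T,\varphi)$. For uniqueness of the eigenvalue, any other eigenpair $(\lambda',\mu')$ with $\mu'\in\mathcal{M}(\widehat{X})$ and $\mathcal{L}_\varphi^*(\mu')=\lambda'\mu'$ satisfies the same hypotheses, so part (1) applied to $\mu'$ gives $\log\lambda'=P(T,\varphi)=\log\lambda$.

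The main obstacle I expect is making the preimage bijection and the uniform constant $e^{\pm K_n}$ rigorous at iterate level: one must iterate (1b) via Lemma~\ref{lem2.2} to decompose $T^{-n}(B_X(x_0,\delta))$ into components on which $T^n$ is a homeomorphism onto $B_X(x_0,\delta)$, which is exactly the setting presupposed by (\ref{1add}). Once that decomposition is in hand, the remaining measure-theoretic estimates and the $\limsup$ bookkeeping are routine.
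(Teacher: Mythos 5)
Your argument is correct and is essentially the paper's own proof: lower bound by integrating $\mathcal{L}_\varphi^n(1)$ over a $\delta$-ball of positive $\mu$-measure, upper bound via a finite cover of the compact space $\widehat{X}$ by $\delta$-balls, both using the $e^{\pm K_n}$ comparison supplied by (\ref{1add}), with part (2) and the uniqueness of the eigenvalue deduced from part (1) together with Theorem \ref{6add}. Your use of a Borel partition subordinate to the cover and the $\limsup$-of-sums identity are only cosmetic variations on the paper's $q\max_i$ bookkeeping.
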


{\bf Proof.} Obviously, (2) follows from (1) and Theorem \ref{6add}.
So we only prove (1) here. We write $\gamma$ for the right of
(\ref{3add}). For $x\in X$ with $\mu(B(x,\delta))>0$, we have
$$\lambda^n\geq \mu(B(x,\delta))\inf_{a\in
B_X(x,\delta)}\mathcal{L}_\varphi^n(1)(a)\geq
\mu(B(x,\delta))e^{-K_n}\mathcal{L}_\varphi^n(1)(x)$$ so that
$\log\lambda\geq \overline{P_x}(T,\varphi),$ and further we have
$\log\lambda\geq\gamma$.

Since $\widehat{X}$ is compact, there exist finitely many points
$x_i\in X (i=1,2,...,q)$ such that
$\widehat{X}=\cup_{i=1}^qB(x_i,\delta)$. Thus
\begin{eqnarray*}\lambda^n&\leq &\sum_i\mu(B(x_i,\delta))\sup_{x\in
B_X(x_i,\delta)}\mathcal{L}_\varphi^n(1)(x)\\
&\leq &e^{K_n}\sum_i
\mu(B(x_i,\delta))\mathcal{L}_\varphi^n(1)(x_i)\\
&\leq
&e^{K_n}q\max_i\mu(B(x_i,\delta))\max_i\mathcal{L}_\varphi^n(1)(x_i),\end{eqnarray*}
where $i$ is such that $\mu(B(x_i,\delta))>0$, and we have
immediately $\log\lambda\leq\gamma.$ Thus $\log\lambda=\gamma.$ \qed

\begin{lem}\label{add11} \ Let $T, \varphi,\lambda$ and $\mu$ be as
in Theorem \ref{thm2.1}. If (\ref{1add}) holds, then we have
(\ref{add10}).
\end{lem}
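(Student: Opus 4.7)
The plan is to apply the Jacobian relation from Theorem \ref{thm2.1}, iterated $n$ times, and then use the distortion bound (\ref{1add}) to control the Birkhoff sum on the injective component $A := T^{-n}_x(B(T^n(x),\delta))$. By Theorem \ref{thm2.1} and Lemma \ref{lem2.2}, the Jacobian of $T^n$ with respect to $\mu$ is $\lambda^n e^{-S_n\varphi}$, and since $T^n$ maps $A$ homeomorphically onto $B(T^n(x),\delta)$,
$$\mu(B(T^n(x),\delta)) \;=\; \mu(T^n(A)) \;=\; \int_A \lambda^n e^{-S_n\varphi(y)}\,d\mu(y).$$
For every $y \in A$, both $y$ and $x$ lie in the same component of $T^{-n}(B(T^n(x),\delta))$, so hypothesis (\ref{1add}) gives $|S_n\varphi(y) - S_n\varphi(x)| \leq K_n$. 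Inserting this bound into the integral and using $\lambda = e^{P(T,\varphi)}$ from Lemma \ref{2add}, one rearranges to obtain
$$e^{-K_n}\mu(B(T^n(x),\delta)) \;\leq\; \frac{\mu(A)}{\exp(S_n\varphi(x) - nP(T,\varphi))} \;\leq\; e^{K_n}\mu(B(T^n(x),\delta)).$$

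The upper half of (\ref{add10}) is then immediate, since $\mu(B(T^n(x),\delta)) \leq \mu(\widehat{X}) = 1$. For the lower half one needs a constant $C > 0$ (depending only on $\delta$) such that $\mu(B(y,\delta)) \geq C$ for every $y \in X$. Cover the compact space $\widehat{X}$ by finitely many balls $B(x_i,\delta/2)$, $i=1,\dots,q$, with $x_i \in X$ (possible since $X$ is dense in $\widehat{X}$); any $y \in X$ lies in some $B(x_i,\delta/2)$, and then $B(x_i,\delta/2) \subset B(y,\delta)$, so $\mu(B(y,\delta)) \geq \min_i \mu(B(x_i,\delta/2))$. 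Setting $C := \min_i \mu(B(x_i,\delta/2))$ finishes the argument, provided every ball in this finite collection has positive $\mu$-mass.

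The main obstacle is thus to verify that $\mu$ assigns positive mass to each $B(x_i,\delta/2)$; more generally, that $\mu$ is positive on every nonempty open set, which is itself a conclusion of Theorem \ref{thm1.1+}. This should follow from the backward density hypothesis together with the transfer identity $\mathcal{L}_\varphi^*\mu = \lambda\mu$: for a nonnegative bump function $f \in \mathcal{C}(\widehat{X})$ supported in the ball, one has $\lambda^m\mu(f) = \mu(\mathcal{L}_\varphi^m f)$, and backward density (combined with the local uniform continuity (1c) of $T^{-1}$ to match the scale $\delta$ to the finer scale $\delta/2$ by passing to a suitable iterate) forces $\mathcal{L}_\varphi^m f$ to be positive on a set of positive $\mu$-measure, whence $\mu(f) > 0$. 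Reconciling the gap between the given backward $\delta$-density and the finer scale needed to dominate the bump $f$ is the delicate point of the proof; once it is handled, the uniform lower bound and hence (\ref{add10}) follow.
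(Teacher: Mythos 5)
Your argument is essentially the paper's proof: apply the iterated Jacobian relation $\mu(T^n(A)) = \int_A \lambda^n e^{-S_n\varphi}\,d\mu$ on the injective component $A=T^{-n}_x(B(T^n(x),\delta))$, bound the Birkhoff sum uniformly on $A$ using (\ref{1add}), take $C=\min_i\mu(B(x_i,\delta/2))$ from a finite $\delta/2$-cover of $\widehat{X}$ by balls centered in $X$ for the lower bound, and use $\mu(\widehat{X})=1$ for the upper. The only divergence is your extra discussion of why $C>0$; the paper simply asserts this in the lemma, relying on the positivity claim already made in the statement of Theorem \ref{thm1.1+} (from backward density), so your sketch is a supplement rather than a different route.
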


\begin{proof}\ \ Since $T^n$ is injective on
$T^{-n}_x(B(T^n(x),\delta))$, we have
\begin{eqnarray*}\mu(B(T^n(x),\delta))&=&\int_{T^{-n}_x(B(T^n(x),\delta))}e^{-S_n\varphi(u)+nP(T,\varphi)}d\mu(u)\\
&\leq&e^{K_n-S_n\varphi(x)+nP(T,\varphi)}\mu(T^{-n}_x(B(T^n(x),\delta))).\end{eqnarray*}
Since $\widehat{X}$ is compact, we have $\widehat{X}=\cup_{k=1}^q
B(x_k,\delta/2)$ for $x_k\in X (1\leq k\leq q<\infty)$. Set
$C=\min\{\mu(B(x_k,\delta/2)): 1\leq k\leq q\}>0.$ For $x\in
T^{-n}(X)$, we have $T^n(x)\in B(x_k,\delta/2)$ for some $k$ and
$B(x_k,\delta/2)\subset B(T^n(x),\delta)$. Thus we deduce the left
inequality of (\ref{add10}). The right inequality follows
immediately from the implication.
\begin{eqnarray*}1\geq\mu(B(T^n(x),\delta))&=&\int_{T^{-n}_x(B(T^n(x),\delta))}e^{-S_n\varphi(u)+nP(T,\varphi)}d\mu(u)\\
&\geq&e^{-K_n-S_n\varphi(x)+nP(T,\varphi)}\mu(T^{-n}_x(B(T^n(x),\delta))).\end{eqnarray*}
\end{proof}

%We emphasize here that the version of Theorem \ref{thm2.1} applies
%to a discussion of parabolic meromorphic functions, which generally
%is admissible but does not satisfy (1c*) (For the detail, please see
%the final section of this paper). Therefore, the reader should pay
%an attention on the difference between Theorem \ref{thm2.1} and the
%corresponding Walters result.

%It is natural to consider a question of under what additional
%condition $\mu$ in Theorem \ref{thm2.1} is a
%$e^{-\varphi}$-conformal or invariant measure for $T$. In Theorem
%\ref{thm2.1}, when $\lambda=1$, $\mu$ is a $e^{-\varphi}$-conformal
%measure for $T$ and in this case, $\mu$ is a fixed point of
%$\mathcal{L}_\varphi^*$. Now we calculate the $\lambda$.

In what follows, we discuss possibility of the existence of
invariant probability measures for $T$ on $X$. First we consider
under what condition $\mu\in\mathcal{M}(\widehat{X})$ becomes an
element of $\mathcal{M}(X)$.

\begin{lem}\label{thm2.2} Let $T$, $\varphi$, $\lambda$ and $\mu$ be
as in Theorem \ref{thm2.1}. Assume that $\mu(X\setminus
T^{-n}(X))=0$ for each $n$ and $T$ is backward dense. Then
$\mu(\partial X)=0$, that is, $\mu$ is a possibility measure on $X$.
\end{lem}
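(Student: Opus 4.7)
The plan is to observe that the hypothesis $\mu(X\setminus T^{-n}(X))=0$ is precisely what is needed to promote $\mu|_X$ to a $\lambda$-eigenmeasure of $\mathcal{L}_\varphi^*$; by subtraction from the global eigen-equation the residue $\mu|_{\partial X}$ is then itself a $\lambda$-eigenmeasure, which forces $\mathcal{L}_\varphi^m(f)$ to integrate to zero against $\mu|_{\partial X}$ whenever $f\in\mathcal{C}(\widehat{X})$ is supported inside $X$. The contradiction with $\mu(\partial X)>0$ will come from producing such an $f$ (and an $m$) for which $\mathcal{L}_\varphi^m(f)$ is strictly positive everywhere on $\widehat{X}$, the positivity on $\partial X$ being forced by backward density together with continuity of $\mathcal{L}_\varphi^m(f)$.

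First I would run the Jacobian change of variable from Theorem~\ref{thm2.1} over a finite Borel partition $\{P_i\}$ of $X$ subordinate to a cover of $\widehat{X}$ by balls $B(x_i,\delta/2)$ with $x_i\in X$. Summing the inverse-branch contributions over each $P_i$ and then over $i$ yields
\begin{equation*}
\int_X \mathcal{L}_\varphi(f)\,d\mu=\lambda\int_{T^{-1}(X)} f\,d\mu=\lambda\int_{X_0} f\,d\mu=\lambda\int_X f\,d\mu,\qquad f\in\mathcal{C}(\widehat{X}),
\end{equation*}
the last equality coming from the hypothesis for $n=1$. Subtracting this from $\mathcal{L}_\varphi^*\mu=\lambda\mu$ and iterating,
\begin{equation*}
\int_{\partial X}\mathcal{L}_\varphi^m(f)\,d\mu=\lambda^m\int_{\partial X}f\,d\mu,\qquad f\in\mathcal{C}(\widehat{X}),\ m\geq 1.
\end{equation*}
For continuous $f\geq 0$ with $\mathrm{supp}\,f\subset X$ the right-hand side is zero, so the nonnegative continuous function $\mathcal{L}_\varphi^m(f)$ vanishes $\mu|_{\partial X}$-almost everywhere.

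To produce the opposing strict positivity I would pick $y_0\in\Omega_m:=\mathrm{dom}(T^m)$ together with $r>0$ satisfying $\overline{B(y_0,r)}\subset\Omega_m$; $\Omega_m$ is open in $X$, and backward density guarantees $T^{-m}(x)\neq\emptyset$ so that $\Omega_m\neq\emptyset$. Continuity of each iterate $T^k$ $(k<m)$ on $\Omega_m$ makes $T^k(\overline{B(y_0,r)})$ a compact subset of $X_0$, so $|S_m\varphi|\leq C_m<\infty$ on $\overline{B(y_0,r)}$. Choose $f\in\mathcal{C}(\widehat{X})$ with $0\leq f\leq 1$, $f\equiv 1$ on $\overline{B(y_0,r/2)}$ and $\mathrm{supp}\,f\subset X$; by backward $\varepsilon$-density with $\varepsilon<r/2$, every $z\in X$ admits a preimage $y_z\in T^{-m}(z)\cap B(y_0,r/2)$, whence
\begin{equation*}
\mathcal{L}_\varphi^m(f)(z)\;\geq\; f(y_z)\,e^{S_m\varphi(y_z)}\;\geq\; e^{-C_m}\;>\;0,
\end{equation*}
and continuity of $\mathcal{L}_\varphi^m(f)$ on $\widehat{X}$ carries the lower bound to every $x\in\partial X$, contradicting the a.e.-vanishing unless $\mu(\partial X)=0$.

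The main obstacle is the simultaneous choice of $(m,y_0,r,\varepsilon)$: the radius $r$ with $\overline{B(y_0,r)}\subset\Omega_m$ is controlled by continuity of $T^{m-1}$ at $y_0$ and may shrink with $m$, while the level $m$ at which backward $\varepsilon$-density holds is dictated independently by (1e). I plan to reconcile them by taking $y_0$ in the totally invariant set $D=\bigcap_m\Omega_m$, which has full $\mu$-measure in $X$ by the hypothesis (nonempty whenever $\mu(X)>0$; the case $\mu(X)=0$ is immediately excluded by the same construction at level $m=1$), and then exploiting that once $T^{-m_1}(z)$ is $\varepsilon$-dense for every $z\in X$ an induction on $k$ shows the same for $T^{-km_1}(z)$, so that $m$ may be slid along the arithmetic progression $m_1,2m_1,\dots$ until the ball at $y_0$ still contained in $\Omega_m$ has radius exceeding $2\varepsilon$.
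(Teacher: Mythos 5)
Your overall strategy is the right one, and it is essentially the one the paper intends (the paper gives no proof, deferring to the proof of Lemma 9(2) in Walters): the change of variables via the Jacobian together with $\mu(X\setminus T^{-1}(X))=0$ shows $\int_X\mathcal{L}_\varphi(f)\,d\mu=\lambda\int_Xf\,d\mu$, subtraction from $\mathcal{L}_\varphi^*\mu=\lambda\mu$ makes $\mu|_{\partial X}$ an eigenmeasure, and the contradiction must come from an $f$ vanishing on $\partial X$ with $\inf_{z\in X}\mathcal{L}_\varphi^m(f)(z)>0$. That first half of your argument is correct. The gap is in the positivity step, and your proposed repair of the circularity does not work. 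Write $\Omega_m=T^{-m}(X)$ and $r_m=\sup\{r:\overline{B(y_0,r)}\subset\Omega_m\}$ for $y_0\in D=\bigcap_m\Omega_m$. Since the $\Omega_m$ are \emph{decreasing}, $r_m$ is non-increasing in $m$; so sliding $m$ up the arithmetic progression $m_1,2m_1,\dots$ can only make the requirement $r_m>2\varepsilon$ harder to meet, never easier. As $\varepsilon$ is fixed before $m_1=m_1(\varepsilon)$ is produced by (1e), and $r_{m_1}$ bears no relation to $\varepsilon$, the simultaneous choice you need is simply not available from the stated hypotheses. Shrinking $\varepsilon$ does not help either, since $m_1(\varepsilon)$ then grows and $r_{m_1}$ may shrink faster.

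Two further points. First, your disposal of the case $\mu(X)=0$ (needed to know $D\neq\emptyset$) by ``the same construction at level $m=1$'' fails: condition (1e) gives no $\varepsilon$-density of $T^{-1}(z)$, so at level $1$ you have no guarantee that $T^{-1}(z)$ meets $B(y_0,r/2)$ for every $z$. Second, the statement you actually need is precisely the paper's Lemma \ref{lem2.3}: for every $\varepsilon>0$ there exist $N$ and $a$ such that every $z\in X$ has a preimage $y\in T^{-N}(z)\cap B(w,\varepsilon)$ with $S_N\varphi(y)\geq a$. That lemma avoids your circularity by bounding $S_N\varphi$ only at finitely many reference preimages $y_i^{(j)}$ (where finiteness is automatic) and transporting the bound to $y=T^{-N}_{y_i^{(j)}}(x)$ via the distortion hypothesis (\ref{2.3+}); but it requires (\ref{2.3+}) or $X=\widehat{X}$ — hypotheses not assumed in Lemma \ref{thm2.2} (and $X=\widehat{X}$ makes the lemma vacuous). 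So to close your argument you must either import such a distortion/boundedness condition (or Walters' non-expansion condition (1c*), under which inverse branches of $T^m$ are $1$-Lipschitz and the needed uniform control follows), or find a genuinely different mechanism; as written, the crucial uniform lower bound $\mathcal{L}_\varphi^m(f)\geq e^{-C_m}$ on $X$ is not established.
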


Lemma \ref{thm2.2} is extracted from the proof of the result (2) in
Lemma 9 in \cite{Walters}. Walters proved Lemma \ref{thm2.2} under
his condition (1c*), while his method is available to produce our
Lemma \ref{thm2.2}.

The following is essentially Theorem 10 of Walters \cite{Walters}
(See Ledrappier \cite{Ledrappier} for the case when
$X_0=X=\widehat{X}$).

\begin{lem}\label{thm2.3} Let $(T,\psi)$ be admissible with $\mathcal{L}_\psi(1)(x)\equiv 1, \forall\ x\in X$. Then for
$\mu\in \mathcal{M}(\Omega)$ for $\Omega=\widehat{X}$ or $X$, the
following are equivalent:

(1) $\mathcal{L}^*_\psi(\mu)=\mu;$

(2) $\mu\in \mathcal{M}(\Omega,T)$ and for each $f\in
\mathcal{C}(\widehat{X})$,
$$E_\mu(f|T^{-1}\mathcal{B})=\mathcal{L}_\psi(f)\circ
T,\ \mu-a.e;$$

(3) $\mu\in \mathcal{M}(\Omega,T)$ and for each $\nu\in
\mathcal{M}(\Omega,T)$,
$$0=\mu(I_\mu(\mathcal{B} |T^{-1}\mathcal{B})+\psi)\geq \nu(I_\nu(\mathcal{B}
|T^{-1}\mathcal{B})+\psi).$$
\end{lem}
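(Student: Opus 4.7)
The plan is to prove (1) $\Leftrightarrow$ (2) and (2) $\Leftrightarrow$ (3), following Walters' Theorem~10 and Ledrappier~\cite{Ledrappier}, adapted to the setting $X_0\subsetneq\widehat{X}$. The key identity underlying (1) $\Leftrightarrow$ (2) is
$$\mathcal{L}_\psi\bigl((g\circ T)\cdot f\bigr)=g\cdot\mathcal{L}_\psi(f),\qquad g,f\in\mathcal{C}(\widehat{X}),$$
immediate from the definition of $\mathcal{L}_\psi$. Assuming (1), taking $f\equiv 1$ and using $\mathcal{L}_\psi(1)\equiv 1$ gives $\int g\,d\mu=\int g\circ T\,d\mu$ for every $g$, so $\mu\in\mathcal{M}(\Omega,T)$; for the conditional-expectation formula, for any Borel $B$ and $f\in\mathcal{C}(\widehat{X})$,
$$\int_{T^{-1}B}f\,d\mu=\int\mathcal{L}_\psi\bigl((\chi_B\circ T)f\bigr)\,d\mu=\int_B\mathcal{L}_\psi(f)\,d\mu=\int_{T^{-1}B}\mathcal{L}_\psi(f)\circ T\,d\mu,$$
the last step by $T$-invariance, which identifies $E_\mu(f|T^{-1}\mathcal{B})=\mathcal{L}_\psi(f)\circ T$. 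The converse (2) $\Rightarrow$ (1) is a single line: $\int f\,d\mu=\int E_\mu(f|T^{-1}\mathcal{B})\,d\mu=\int\mathcal{L}_\psi(f)\circ T\,d\mu=\int\mathcal{L}_\psi(f)\,d\mu$ by $T$-invariance.

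For (2) $\Leftrightarrow$ (3), for each $\nu\in\mathcal{M}(\Omega,T)$ I would introduce the fiber conditional weight $g_\nu:X_0\to[0,1]$, namely the $\nu$-conditional probability of $\{y\}$ given $T^{-1}\mathcal{B}$; this satisfies $\sum_{Ty=x}g_\nu(y)=1$ for $\nu$-a.e.\ $x$ and $I_\nu(\mathcal{B}|T^{-1}\mathcal{B})(y)=-\log g_\nu(y)$. Since $\{e^{\psi(y)}\}_{Ty=x}$ is itself a probability distribution on the fiber by $\mathcal{L}_\psi(1)\equiv 1$, Jensen's inequality applied to $\log$ with weights $g_\nu$ gives, at $\nu$-a.e.\ $x$,
$$\sum_{Ty=x}g_\nu(y)\bigl(\psi(y)-\log g_\nu(y)\bigr)\leq \log\sum_{Ty=x}e^{\psi(y)}=0.$$
The left side equals $E_\nu\bigl(\psi+I_\nu(\mathcal{B}|T^{-1}\mathcal{B})\,\bigm|\,T^{-1}\mathcal{B}\bigr)$ at $x$; integrating against $\nu$ and using $T$-invariance yields $\nu(\psi+I_\nu(\mathcal{B}|T^{-1}\mathcal{B}))\leq 0$ for every such $\nu$, with equality (by strict concavity of $\log$ and the normalizations $\sum g_\nu=\sum e^\psi=1$) if and only if $g_\nu=e^\psi$ $\nu$-a.e., which is precisely (2) for $\nu$ in place of $\mu$. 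Thus (2) for $\mu$ gives both the equality $\mu(\psi+I_\mu(\mathcal{B}|T^{-1}\mathcal{B}))=0$ and, via the general upper bound, the variational inequality in (3); conversely, (3) forces the equality case of Jensen for $\mu$ and hence (2).

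The main technical obstacle is giving rigorous meaning to the fiber conditional weights $g_\nu$ and to $I_\nu(\mathcal{B}|T^{-1}\mathcal{B})$ when the fibers $T^{-1}(x)$ are countably infinite. The cleanest route is to use the covering property (1b) to build a countable Borel partition $\mathcal{A}$ of $X_0$ that refines the injective-component decomposition of $T^{-1}$ over an open cover of $\widehat{X}$ by balls of radius $\delta$, so that $T$ is injective on each atom of $\mathcal{A}$; on each atom $A$ one then defines $g_\nu$ as the Radon-Nikodym derivative of $\nu|_A$ with respect to the pullback of $\nu$ via $T|_A$, and independence of the choice of $\mathcal{A}$ follows by standard martingale/refinement arguments. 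A secondary point is that $\nu$-almost every fiber must lie in $X_0$; for $\nu\in\mathcal{M}(\Omega,T)$ this is automatic since $T^{-1}(\widehat{X})\subseteq X_0$ forces $\nu(X_0)=1$ by invariance, and Lemma~\ref{thm2.2} handles the passage to $\mu\in\mathcal{M}(X)$ in the same spirit. Once these measurability points are settled, the algebraic steps above go through verbatim.
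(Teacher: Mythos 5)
Your proposal is correct, and it is in essence the same argument the paper relies on: the paper gives no proof of Lemma~\ref{thm2.3} at all, stating only that it ``is essentially Theorem 10 of Walters \cite{Walters}'' (with Ledrappier \cite{Ledrappier} for the case $X_0=X=\widehat{X}$), and your two steps --- the operator identity $\mathcal{L}_\psi\bigl((g\circ T)f\bigr)=g\,\mathcal{L}_\psi(f)$ for $(1)\Leftrightarrow(2)$, and the Jensen/relative-entropy comparison of the fiber weights $g_\nu$ with $e^{\psi}$ for $(2)\Leftrightarrow(3)$ --- are exactly the content of that cited proof. The measurability points you flag (extending the eigenmeasure equation to bounded Borel test functions via the Jacobian, building the countable partition into injectivity domains from (1b), and $\nu(X_0)=1$ for invariant $\nu$) are handled the same way there, so no gap remains.
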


Actually, in terms of Theorem \ref{thm2.1}, Lemma \ref{thm2.3}
asserts the existence of a measure $\mu$ satisfying (1) in Lemma
\ref{thm2.3} over $\widehat{X}$ for $(T,\psi)$. Indeed, since
$\mathcal{L}_\psi(1)(x)\equiv 1$, we have the eigenvalue
$\lambda=1$. Therefore we have

\begin{thm}\label{8add}\ Under the assumption of Lemma\ref{thm2.3}, there exists a
$\mu\in \mathcal{M}(\widehat{X})$ such that (1), (2) and (3) stated
in Lemma \ref{thm2.3} hold.\end{thm}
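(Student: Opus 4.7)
The strategy is to invoke Theorem \ref{thm2.1} directly for $(T,\psi)$ and then notice that the normalization $\mathcal{L}_\psi(1)\equiv 1$ pins down the eigenvalue to be exactly $1$, so that the resulting eigenmeasure automatically satisfies hypothesis (1) of Lemma \ref{thm2.3} with $\Omega=\widehat{X}$. The equivalence already established in Lemma \ref{thm2.3} then yields (2) and (3) at no extra cost.

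More precisely, since $(T,\psi)$ is admissible by assumption, Theorem \ref{thm2.1} produces $\mu\in\mathcal{M}(\widehat{X})$ together with a constant
$$\lambda=\mathcal{L}^*_\psi(\mu)(1)>0$$
such that $\mathcal{L}^*_\psi(\mu)=\lambda\mu$. The key (and essentially only) step is to evaluate $\lambda$ using the pairing between $\mu$ and $\mathcal{L}_\psi$: by the defining property of the dual operator together with the hypothesis $\mathcal{L}_\psi(1)\equiv 1$,
$$\lambda=\mathcal{L}^*_\psi(\mu)(1)=\mu\bigl(\mathcal{L}_\psi(1)\bigr)=\mu(1)=1,$$
so that in fact $\mathcal{L}^*_\psi(\mu)=\mu$.

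This is precisely condition (1) of Lemma \ref{thm2.3} for the choice $\Omega=\widehat{X}$, and the equivalence $(1)\Longleftrightarrow(2)\Longleftrightarrow(3)$ stated there immediately gives that the same $\mu$ satisfies (2) and (3) as well. I do not anticipate any substantive obstacle: every ingredient, namely admissibility, existence of the eigenmeasure from Theorem \ref{thm2.1}, and the equivalence of (1)--(3) from Lemma \ref{thm2.3}, is already in place, and the one-line computation of $\lambda$ is what turns the eigenmeasure into an invariant probability measure with the desired conditional expectation and variational properties.
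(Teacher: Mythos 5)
Your proof is correct and follows precisely the same route as the paper: apply Theorem \ref{thm2.1} to the admissible pair $(T,\psi)$ to obtain the eigenmeasure, observe that the normalization $\mathcal{L}_\psi(1)\equiv 1$ forces the eigenvalue $\lambda=\mu(\mathcal{L}_\psi(1))=1$, and then invoke the equivalence of (1), (2), (3) from Lemma \ref{thm2.3}. No discrepancies.
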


We shall give a general result in Theorem \ref{thm2.4}. Under the
assumption (1e), we can establish the following result, which was
used in \cite{Walters} to establish some important results.

\begin{lem}\label{lem2.3} Let $(T,\varphi)$ be admissible and $T$
be backward dense. Assume that for some $\delta_0<\delta$,
\begin{equation}\label{2.3+}C_\varphi^{(1)}=\sup_{x\in X}\sup_{T(y)=x}\{|\varphi(y)-\varphi(y')|:
d(x,x')<\delta_0\}<\infty\end{equation} or $X=\widehat{X}$. Then
$\forall \varepsilon>0, \exists N>0$ and $a\in \mathbb{R}$ such that
$\forall x, w\in X$, $\exists y\in T^{-N}x\cap B(w,\varepsilon)$
with $S_N\varphi(y)\geq a.$
\end{lem}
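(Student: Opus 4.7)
The approach is a reduction-to-a-finite-cover combined with a backward construction and a final extraction. Fix $\varepsilon>0$. Using compactness of $\widehat X$, pick a finite $\varepsilon/2$-net $\{v_1,\ldots,v_q\}\subset X$ so that every $w\in X$ has some index $k$ with $B(v_k,\varepsilon/4)\subseteq B(w,\varepsilon)$. Then it suffices to show, for each $k$ separately, that there exist $N_k\in\mathbb{N}$ and $a_k\in\mathbb{R}$ such that for every $x\in X$ the set $T^{-N_k}(x)\cap B(v_k,\varepsilon/4)$ contains some $y$ with $S_{N_k}\varphi(y)\geq a_k$. A common $N$ and $a=\min_k a_k$ is then obtained by padding the shorter orbits with additional backward iterations, using backward density at the smaller scale $\varepsilon/8$ (i.e., choose $M$ so that $T^{-M}(z)$ is $\varepsilon/8$-dense for every $z$, and attach $N-N_k$ further preimages inside an $\varepsilon/8$-neighborhood of the already constructed point, so that the extended orbit stays inside $B(v_k,\varepsilon/4+\varepsilon/8)$ after a mild rescaling of $\varepsilon$).

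For the existence of the preimage itself I would apply the equivalent formulation $(\dagger)$ of backward density derived immediately before Theorem \ref{6add}: with $U_k:=B(v_k,\varepsilon/8)$ it yields an integer $N_k$ with $T^{N_k}(U_k\cap T^{-N_k}(X))=X$, so every $x\in X$ has some $y_0\in U_k\cap T^{-N_k}(x)$.

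The main obstacle is the lower bound $S_{N_k}\varphi(y)\geq a_k$: backward density alone is silent about the cocycle values along the produced preimages, which in principle can be arbitrarily negative. The hypothesis ``$X=\widehat X$ or (\ref{2.3+})'' is what closes this gap. Under $X=\widehat X$, I pick a continuous bump $\chi_k$ with $0\leq\chi_k\leq 1_{U_k}$ and $\chi_k\equiv 1$ on $B(v_k,\varepsilon/16)$. By Lemma \ref{lem2.2}, $\mathcal{L}^{N_k}_\varphi(\chi_k)\in\mathcal{C}(\widehat X)$; and $(\dagger)$ applied to $B(v_k,\varepsilon/16)$ shows that $\mathcal{L}^{N_k}_\varphi(\chi_k)(x)>0$ for every $x\in\widehat X$, whence compactness gives a uniform bound $\mathcal{L}^{N_k}_\varphi(\chi_k)(x)\geq c_k>0$. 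Combined with the iterated admissibility $\mathcal{L}^{N_k}_\varphi(1)(x)\leq K^{N_k}$, and using that summability of $\sum e^{\varphi}$ restricts to at most finitely many injective branches carrying $S_{N_k}\varphi$ above any fixed threshold, a pigeonhole argument selects a single branch $y\in U_k\cap T^{-N_k}(x)$ with $e^{S_{N_k}\varphi(y)}$ uniformly bounded from below by a constant depending only on $N_k$, $c_k$ and $K$. Under (\ref{2.3+}), I would instead iterate the one-step distortion bound $|\varphi(T_y^{-1}x)-\varphi(T_y^{-1}x')|\leq C_\varphi^{(1)}$ for $d(x,x')<\delta_0$, to obtain $|S_{N_k}\varphi(u)-S_{N_k}\varphi(u')|\leq N_kC_\varphi^{(1)}$ whenever $u,u'$ lie in a common injective component of $T^{-N_k}$ over a $\delta_0$-ball; fixing one concrete reference point $x_*\in X$ and a reference preimage $y_*\in U_k\cap T^{-N_k}(x_*)$ produced by $(\dagger)$ and comparing gives the uniform bound $a_k=S_{N_k}\varphi(y_*)-N_kC_\varphi^{(1)}$. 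The key technical step --- and the hardest part of the argument --- is precisely this conversion from ``a preimage exists'' (via backward density) to ``a preimage with controlled cocycle value exists'' (via either compactness/positivity of the iterated transfer operator, or bounded distortion of $\varphi$ along branches).
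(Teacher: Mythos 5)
You correctly isolate the hard part: backward density and $(\dagger)$ produce \emph{some} preimage $y_0\in U_k\cap T^{-N_k}(x)$, but are silent about $S_{N_k}\varphi(y_0)$. Unfortunately, both of your routes to the lower bound have genuine gaps. In the case (\ref{2.3+}), you compare to a single reference preimage $y_*$ of a single reference point $x_*$; but the iterated estimate $|S_{N_k}\varphi(u)-S_{N_k}\varphi(u')|\leq N_kC_\varphi^{(1)}$ only applies when $u,u'$ lie in the \emph{same} injective branch of $T^{-N_k}$ over a small ball and when the intermediate iterates $T^j(u),T^j(u')$ stay $\delta_0$-close (for this (1c) is needed, not just (\ref{2.3+})), and neither holds here: $d(x,x_*)$ need not be $<\delta_0$ for arbitrary $x$, and the two preimages supplied by $(\dagger)$ are not known to share a branch. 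In the $X=\widehat X$ case the pigeonhole step is the gap: $\mathcal{L}^{N_k}_\varphi(\chi_k)(x)\geq c_k$ bounds a \emph{sum} of possibly infinitely many positive terms, which does not bound any single term from below --- near points of $X\setminus X_0$ preimages may cluster without bound inside $U_k$, each contributing arbitrarily little, and the number of such terms is not uniformly controlled in $x$.

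The paper closes the gap with a branch-following construction that you do not use. It fixes $N$ once from (1e) at scale $\varepsilon/4$, covers $\widehat X$ by two finite families of balls, $B(w_j,\varepsilon/2)$ for the target $w$ and $B(x_i,\tau)$ for the source $x$, and chooses $\tau$ small via (1c) so that every injective component of $T^{-k}$ ($1\le k\le N$) over $B(x_i,\tau)$ has diameter $<\varepsilon/4$. For each pair $(i,j)$ it fixes a reference preimage $y_i^{(j)}\in T^{-N}(x_i)\cap B(w_j,\varepsilon/4)$, and then, crucially, \emph{defines} $y:=T^{-N}_{y_i^{(j)}}(x)$ --- the preimage of $x$ lying in the same branch as $y_i^{(j)}$ --- rather than letting $(\dagger)$ hand over an unconstrained one. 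The small branch diameters simultaneously force $y\in B(w,\varepsilon)$ and keep every $T^{N-k}(y)$ within $\varepsilon/4$ of $T^{N-k}(y_i^{(j)})$, so (\ref{2.3+}) applies step by step and gives $S_N\varphi(y)\ge S_N\varphi(y_i^{(j)})-NC_\varphi^{(1)}$; finiteness of the set $\{y_i^{(j)}\}$ then yields a uniform $a$. Constructing the preimage by following a fixed branch of a nearby reference point is the missing idea: it is precisely what converts ``a preimage exists'' into a quantitative cocycle comparison.
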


{\bf Proof.}\ \ In terms of (1e), choose $N$ such that $T^{-N}(x)$
is $\varepsilon/4$-dense in $X$ for each $x\in X$. Choose a finite
number of points $w_j (j=1,2,...,s)$ such that
$\widehat{X}=\cup_{j=1}^sB(w_j,\varepsilon/2)$ and for the fixed
$N$, choose finitely many $x_i (i=1,2,...,m)$ such that
$\widehat{X}=\cup_{i=1}^mB(x_i,\tau)$ for some small $\tau$ which is
determined to have ${\rm diam}(B_j^{(k)}(x_i))\break <\varepsilon/4$
for each injective component $B_j^{(k)}(x_i)$ of $T^{-k} (1\leq
k\leq N)$ over $B(x_i,\tau)$. The existence of $\tau$ is confirmed
by the condition (1c).

Let any pair $x, w\in X$ be given. Then $w\in B(w_j,\varepsilon/2)$
for some fixed $j$ and $x\in B(x_i,\tau)$ for some fixed $i$. Since
$T^{-N}(x_i)$ is $\varepsilon/4$-dense in $X$, we can choose a point
$y_i^{(j)}\in T^{-N}(x_i)\cap B(w_j,\varepsilon/4)$. Set
$y=T^{-N}_{y_i^{(j)}}(x)$ and so
$d(T^{N-k}(y),T^{N-k}(y_i^{(j)}))<\varepsilon/4\ (1\leq k\leq N)$
and further
$$d(y,w)\leq d(y,w_j)+d(w_j,w)\leq
d(y,y_i^{(j)})+d(y_i^{(j)},w_j)+\varepsilon/2<\varepsilon,$$ that
is, $y\in T^{-N}x\cap B(w,\varepsilon)$. And we have
\begin{eqnarray*}
S_N\varphi(y)&=&S_N\varphi(y)-S_N\varphi(y_i^{(j)})+S_N\varphi(y_i^{(j)})\\
&\geq & S_N\varphi(y_i^{(j)})-NC_\varphi^{(1)}\\
&\geq&\min_i\{S_N\varphi(y_i^{(j)})\}-NC_\varphi^{(1)}=a_j.\end{eqnarray*}
Put $a=\min_j\{a_j\}$ and then we attain the desired result.\qed

Here we stress that in Lemma \ref{lem2.3} we do not assume any
expanding property for $T$. Walters proved the result in terms of
(1c*), while we observe that actually the condition (1c*) can be
replaced by (1c).

Therefore, we have the following

\begin{thm}\label{thm2.4}\ \ Let $\varphi\in \mathcal{C}(X_0)$ be summable and $(T,\psi)$ be admissible for
$\psi=\varphi-\log\mathcal{L}_\varphi(1)\circ T$. Then there exists
a $\mu\in \mathcal{M}(\widehat{X},T)$ such that
$\mathcal{L}_\varphi(1)\circ T\ e^{-\varphi}$ is the Jacobian of $T$
with respect to $\mu$. Furthermore, if $T$ satisfies (1e), then
$\mu\in\mathcal{M}(X,T)$ and if $\{\mathcal{L}_\psi^n(f):n\geq 0\}$
is equicontinuous for a $f\in \mathcal{C}(\widehat{X})$ and
(\ref{2.3+}) holds, then $\mathcal{L}_\psi^n(f)\rightrightarrows
\mu(f)$ for the $f\in \mathcal{C}(\widehat{X})$.
\end{thm}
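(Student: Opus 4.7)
The plan is to reduce the theorem to results already established in the excerpt and then dispatch the asymptotic statement by a standard Ruelle--Walters argument. First I observe that $(T,\psi)$ is \emph{normalized}: a direct calculation using $\psi=\varphi-\log\mathcal{L}_\varphi(1)\circ T$ gives
$$\mathcal{L}_\psi(1)(x)=\sum_{T(y)=x}\frac{e^{\varphi(y)}}{\mathcal{L}_\varphi(1)(x)}=1$$
for every $x\in X$. The admissibility of $(T,\psi)$ then lets me invoke Theorem \ref{8add}, which yields a measure $\mu\in\mathcal{M}(\widehat{X},T)$ with $\mathcal{L}_\psi^*(\mu)=\mu$; specializing Theorem \ref{thm2.1}(1) to the eigenvalue $\lambda=1$ identifies $e^{-\psi}=\mathcal{L}_\varphi(1)\circ T\cdot e^{-\varphi}$ as the Jacobian of $T$ with respect to $\mu$, which is the first assertion of the theorem.

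If in addition $T$ satisfies (1e), I would apply Lemma \ref{thm2.2} to upgrade $\mu$ to an element of $\mathcal{M}(X,T)$. Its hypothesis $\mu(X\setminus T^{-n}(X))=0$ follows from the $T$-invariance of $\mu$ on $\widehat{X}$: since $T:X_0\to X$, one has $T^{-1}(X)=X_0$, hence $\mu(X_0)=\mu(T^{-1}(X))=\mu(X)=1$, and iteration gives $\mu(T^{-n}(X))=1$ for every $n$. Condition (1e) is exactly backward density, so Lemma \ref{thm2.2} yields $\mu(\partial X)=0$, and the $T$-invariance on $\widehat{X}$ restricts to $\mathcal{M}(X,T)$.

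The asymptotic statement $\mathcal{L}_\psi^n(f)\rightrightarrows\mu(f)$ is the main obstacle. The Markov identity $\mathcal{L}_\psi(1)=1$ gives $\|\mathcal{L}_\psi^n(f)\|_\infty\leq\|f\|_\infty$, so the hypothesized equicontinuity combined with Arzel\`a--Ascoli makes $\{\mathcal{L}_\psi^n(f)\}_{n\geq 0}$ relatively compact in $\mathcal{C}(\widehat{X})$. Moreover $\mu(\mathcal{L}_\psi^n(f))=\mu(f)$ for every $n$, so every uniform cluster point shares the same $\mu$-integral; it therefore suffices to prove that the oscillation $\omega_n:=\sup\mathcal{L}_\psi^n(f)-\inf\mathcal{L}_\psi^n(f)$ tends to zero.

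To contract the oscillation I would use Lemma \ref{lem2.3}, which applies to $(T,\psi)$ once one observes that $\log\mathcal{L}_\varphi(1)\in\mathcal{C}(\widehat{X})$ is uniformly continuous on the compact $\widehat{X}$, so the bound (\ref{2.3+}) for $\varphi$ transfers to the analogous bound $C_\psi^{(1)}<\infty$ for $\psi$. For each $\varepsilon>0$ the lemma supplies $N\in\mathbb{N}$ and $a\leq 0$ such that for every pair $x,w\in X$ some $y\in T^{-N}(x)\cap B(w,\varepsilon)$ satisfies $S_N\psi(y)\geq a$. Writing $\mathcal{L}_\psi^n(f)=c_n+h_n$ with $h_n\geq 0$ and $\sup h_n=\omega_n$, choosing $w$ near the maximizer of $h_n$ in the expansion
$$\mathcal{L}_\psi^N(h_n)(x)=\sum_{T^N(y)=x}e^{S_N\psi(y)}h_n(y)\geq e^a\bigl(\omega_n-\rho(\varepsilon)\bigr),$$
where $\rho(\varepsilon)$ is the uniform modulus of continuity of $\{h_n\}$ on scale $\varepsilon$, and combining with the trivial upper bound $\sup\mathcal{L}_\psi^N(h_n)\leq\omega_n$ yields
$$\omega_{n+N}\leq(1-e^a)\,\omega_n+e^a\rho(\varepsilon).$$
Since $\omega_n$ is non-increasing and $1-e^a<1$, iterating this inequality and then letting $\varepsilon\to 0$ forces $\omega_n\to 0$, so $\mathcal{L}_\psi^n(f)$ converges uniformly to its $\mu$-integral $\mu(f)$.
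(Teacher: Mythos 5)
Your proposal tracks the paper's own proof: normalization $\mathcal{L}_\psi(1)\equiv 1$, Theorem \ref{8add} (equivalently Theorem \ref{thm2.1} at $\lambda=1$ together with Lemma \ref{thm2.3}) for the invariant eigenmeasure and the Jacobian, Lemma \ref{thm2.2} fed by the invariance-derived identity $\mu(X\setminus T^{-n}X)=0$ to place $\mu$ in $\mathcal{M}(X,T)$, and then Lemma \ref{lem2.3} together with equicontinuity to establish $\mathcal{L}_\psi^n(f)\rightrightarrows\mu(f)$, where the paper merely cites the argument of Walters' Theorem 6 and you supply the oscillation-contraction explicitly. Two small inaccuracies do not affect the argument: you cannot assert $\mu(X)=1$ before Lemma \ref{thm2.2} has been applied (what the invariance gives directly is $\mu(X)=\mu(T^{-1}X)=\mu(X_0)$, hence $\mu(X\setminus T^{-n}X)=0$, which is all Lemma \ref{thm2.2} needs), and $\mathcal{L}_\varphi(1)$ is not known to extend to an element of $\mathcal{C}(\widehat{X})$ since only $(T,\psi)$, not $(T,\varphi)$, is assumed admissible --- the correct justification for transferring (\ref{2.3+}) from $\varphi$ to $\psi$ is that (\ref{2.3+}) for $\varphi$ already forces $\left|\log\mathcal{L}_\varphi(1)(x)-\log\mathcal{L}_\varphi(1)(x')\right|\leq C_\varphi^{(1)}$ whenever $d(x,x')<\delta_0$, giving $C_\psi^{(1)}\leq 2C_\varphi^{(1)}<\infty$.
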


{\bf Proof.}\ \ It is obvious that $\mathcal{L}_\psi(1)(x)\equiv 1$
and in terms of Theorem \ref{thm2.1}, there exists a
$\mu\in\mathcal{M}(\widehat{X})$ such that
$\lambda=\mathcal{L}^*_\psi(\mu)(1)=\mu(\mathcal{L}_\psi(1))=1,$
$\mathcal{L}^*_\psi(\mu)=\mu$ and $\mathcal{L}_\varphi(1)\circ T\
e^{-\varphi}$ is the Jacobian of $T$ with respect to $\mu$. Then it
follows from Lemma \ref{thm2.3} that $\mu\in
\mathcal{M}(\widehat{X},T)$. The first part of Theorem \ref{thm2.4}
is proved.

Noting $X\setminus T^{-n}X=\cup_{j=0}^{n-1} T^{-j}(X\setminus
T^{-1}X)$, we have
\begin{eqnarray*} \mu(X\setminus T^{-n}X)&\leq
&\sum_{j=0}^{n-1}\mu(T^{-j}(X\setminus
T^{-1}X))=\sum_{j=0}^{n-1}\mu(X\setminus T^{-1}X)\\
&=&\sum_{j=0}^{n-1}(\mu(X)-\mu(T^{-1}X))=0.\end{eqnarray*} Employing
Lemma \ref{thm2.2} implies that if $T$ satisfies (1e), then
$\mu\in\mathcal{M}(X)$ and so $\mu\in\mathcal{M}(X,T)$.

Now assume that $\{\mathcal{L}_\psi^n(f):n\geq 0\}$ is
equicontinuous for a $f\in \mathcal{C}(\widehat{X})$ and then its
closure is compact in $\mathcal{C}(\widehat{X})$. For any convergent
sequence $\mathcal{L}_\psi^{n_k}(f)$ under the norm, in terms of
Lemma \ref{lem2.3}, the argument in the proof of Theorem 6 in
\cite{Walters} implies that
$\mathcal{L}_\psi^{n_k}(f)\rightrightarrows c\in\mathbb{R}$ as
$k\rightarrow\infty$. Since
$\mu(\mathcal{L}_\psi^{n_k}(f))={\mathcal{L}^*}_\psi^{n_k}(\mu)(f)=\mu(f)$,
we have $c=\mu(c)=\mu(f)$ and so the final part of Theorem
\ref{thm2.4} is proved.\qed

In Theorem \ref{thm2.1}, when $\lambda=1$, $\mu_\varphi$ is a
$e^{-\varphi}$-conformal measure for $T$, but generally,
$\mu_\varphi$ may not be invariant for $T$, and even $\mu_\varphi$
may not be equivalent with $\mu_\psi$ in Theorem \ref{thm2.4}. This
leads us to pose a question.

\begin{que}\label{2.1} Under what condition, are $\mu_\varphi$ and $\mu_\psi$
equivalent?\end{que}

We remark on the condition (1d) for $\psi$ in Theorem \ref{thm2.4},
that is
$$\sum_{T(y)=x}\left|{e^{\varphi(y)}\over
\mathcal{L}_\varphi(1)(x)}-{e^{\varphi(y')}\over
\mathcal{L}_\varphi(1)(x')}\right|\rightarrow 0,\ {\rm as}\
d(x,x')\rightarrow 0.$$ We estimate the quantity in the left side of
above formula:
\begin{eqnarray*}&\ &\sum_{T(y)=x}\left|{e^{\varphi(y)}\over
\mathcal{L}_\varphi(1)(x)}-{e^{\varphi(y')}\over
\mathcal{L}_\varphi(1)(x')}\right|\\
&=&{1\over
\mathcal{L}_\varphi(1)(x)\mathcal{L}_\varphi(1)(x')}\sum_{T(y)=x}
\left|\mathcal{L}_\varphi(1)(x')e^{\varphi(y)}-\mathcal{L}_\varphi(1)(x)e^{\varphi(y')}\right|\\
&\leq &{1\over
\mathcal{L}_\varphi(1)(x)\mathcal{L}_\varphi(1)(x')}\sum_{T(y)=x}
\mathcal{L}_\varphi(1)(x')\left|e^{\varphi(y)}-e^{\varphi(y')}\right|\\
&+&{1\over
\mathcal{L}_\varphi(1)(x)\mathcal{L}_\varphi(1)(x')}\sum_{T(y)=x}
|\mathcal{L}_\varphi(1)(x')-\mathcal{L}_\varphi(1)(x)|e^{\varphi(y')}\\
&=&{1\over
\mathcal{L}_\varphi(1)(x)}\sum_{T(y)=x}\left|e^{\varphi(y)}-e^{\varphi(y')}\right|+{1\over
\mathcal{L}_\varphi(1)(x)}|\mathcal{L}_\varphi(1)(x')-\mathcal{L}_\varphi(1)(x)|\\
&\leq &{2\over
\mathcal{L}_\varphi(1)(x)}\sum_{T(y)=x}\left|e^{\varphi(y)}-e^{\varphi(y')}\right|.
\end{eqnarray*}
Thus if $\inf\{\mathcal{L}_\varphi(1)(x): \forall x\in X\}>0$, then
the condition (1d) for $\varphi$ implies (1d) for $\psi$.

\begin{lem}\label{lem2.4}  Let $(T,\varphi)$ be admissible. Assume that (\ref{2.3+}) holds
or $X=\widehat{X}$. Then $(T,\psi)$ is admissible.\end{lem}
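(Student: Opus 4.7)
The plan is to verify the four requirements defining admissibility of $(T,\psi)$ in turn. Conditions (1a), (1b), (1c) are intrinsic to $T$ and are inherited unchanged from the admissibility of $(T,\varphi)$. Summability of $\psi$ is automatic from the very choice of normalization: unfolding the definition gives $\mathcal{L}_\psi(1)(x) = \sum_{T(y)=x} e^{\varphi(y)}/\mathcal{L}_\varphi(1)(T(y)) = 1$ for every $x\in X$. For the continuity of $\psi$ on $X_0$, I would first note that (1d) for $(T,\varphi)$ forces $\mathcal{L}_\varphi(1)$ to be uniformly continuous on $X$, and (1b) forces $T^{-1}(x)$ to be nonempty, hence $\mathcal{L}_\varphi(1) > 0$ on $X$; since $T(X_0) \subseteq X$, composing with $\log$ and subtracting from $\varphi$ gives $\psi \in \mathcal{C}(X_0)$.

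The substantive step is (1d) for $(T,\psi)$, and here I would invoke directly the estimate performed in the remark immediately preceding the lemma, namely
$$\sum_{T(y)=x}\left|e^{\psi(T^{-1}_y(x))} - e^{\psi(T^{-1}_y(x'))}\right| \leq \frac{2}{\mathcal{L}_\varphi(1)(x)}\sum_{T(y)=x}\left|e^{\varphi(T^{-1}_y(x))} - e^{\varphi(T^{-1}_y(x'))}\right|.$$
By (1d) for $(T,\varphi)$, the outer sum on the right tends to $0$ uniformly as $d(x,x')\to 0$. The whole estimate therefore reduces (1d) for $\psi$ to the single analytic fact $\inf_{x\in X}\mathcal{L}_\varphi(1)(x) > 0$.

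Establishing this lower bound is the main obstacle, and the dichotomy in the hypothesis is tailored to it. If $X = \widehat{X}$, the bound is immediate: $\mathcal{L}_\varphi(1)$ is continuous and everywhere positive on the compact space $\widehat{X}$. Under (\ref{2.3+}), I would argue by a local comparison followed by propagation to $\widehat{X}$. For $x, x' \in X$ with $d(x,x') < \delta_0$, the injective branches identify the two preimage sets, and the branchwise bound $|\varphi(y) - \varphi(y')| \leq C_\varphi^{(1)}$ gives, after summing,
$$e^{-C_\varphi^{(1)}}\,\mathcal{L}_\varphi(1)(x) \leq \mathcal{L}_\varphi(1)(x') \leq e^{C_\varphi^{(1)}}\,\mathcal{L}_\varphi(1)(x).$$
The uniform continuity of $\mathcal{L}_\varphi(1)$ extends it to a continuous $F\in \mathcal{C}(\widehat{X})$, and the above comparison passes to $F$. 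Suppose $F(x_0) = 0$ at some $x_0 \in \widehat{X}$; then $F \equiv 0$ on $B(x_0,\delta_0)\cap \widehat{X}$ by the comparison. But $X$ is dense in $\widehat{X}$, so this ball contains a point $x\in X$ where $F(x) = \mathcal{L}_\varphi(1)(x) > 0$ (by (1b)), a contradiction. Hence $F$ is strictly positive on the compact $\widehat{X}$, giving $\inf_{X} \mathcal{L}_\varphi(1) \geq \inf_{\widehat{X}} F > 0$, which closes the argument. The hardest point, conceptually, is the propagation of positivity from the dense subset $X$ to all of $\widehat{X}$; once this is secured, the remaining verifications are routine applications of the displayed estimate.
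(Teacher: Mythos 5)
Your proof is correct, and its core is the same as the paper's: both reduce condition (1d) for $\psi$ to the single bound $\inf_{x\in X}\mathcal{L}_\varphi(1)(x)>0$ via the estimate displayed in the remark preceding the lemma, and both then establish that bound by compactness. The treatments of the lower bound differ in detail, and yours is actually the more complete of the two. The paper explicitly proves only the case $X=\widehat{X}$, and does so by fixing, for each $x_i$ in a finite cover, a single inverse branch along which $\varphi$ varies by at most $1$, obtaining $\mathcal{L}_\varphi(1)(x)\geq e^{\varphi(y_i)-1}$ from one summand; your $X=\widehat{X}$ argument instead uses that $\mathcal{L}_\varphi(1)$ is (uniformly) continuous by (1d) and strictly positive, hence bounded below on the compact $\widehat{X}$ — shorter and equally valid. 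For the case where only (\ref{2.3+}) is assumed, which the paper leaves untreated, your argument (the two-sided comparison $e^{-C_\varphi^{(1)}}\mathcal{L}_\varphi(1)(x)\leq\mathcal{L}_\varphi(1)(x')\leq e^{C_\varphi^{(1)}}\mathcal{L}_\varphi(1)(x)$ on $\delta_0$-balls, continuous extension $F$ to $\widehat{X}$, and propagation of positivity from the dense $X$) is sound; one could shorten it by covering $\widehat{X}$ with finitely many $\delta_0$-balls centred at points of $X$ and taking the minimum over the centres, which avoids the extension altogether, but the extension route works. The only points worth flagging are implicit in both your argument and the paper's: positivity of $\mathcal{L}_\varphi(1)(x)$ rests on the tacit assumption in (1b) that the preimage decomposition is nonempty (i.e.\ $T$ is surjective), and the branch-wise comparison uses that the branches over $B_X(x,\delta_0)$ put $T^{-1}(x)$ and $T^{-1}(x')$ in bijection, which (1b) supplies.
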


{\bf Proof.}\ \ We only prove Lemma \ref{lem2.4} for the case when
$X=\widehat{X}$. It is obvious that for each $x\in X$, we can find a
$0<\delta_x<\delta$ such that $|\varphi(y')-\varphi(y)|\leq 1$ for
$y\in T^{-1}(x)\cap A_j(x)$ for some fixed $j$ and
$y'=T^{-1}_y(x'),\ \forall \ x'\in B(x,\delta_x).$ Since $X$ is
compact, we can find finitely many points $x_i (1\leq i\leq
M<\infty)$ such that $X=\cup_{i=1}^M B(x_i,\delta_{x_i}).$ Then for
any point $x\in X$, $x\in B(x_i,\delta_{x_i})$ for some $i$ and we
have
$$\varphi(y)=\varphi(y)-\varphi(y_i)+\varphi(y_i)\geq \varphi(y_i)-1=a_i (\rm say),$$
$y_i\in T^{-1}(x_i)\cap A_j(x_i)$ where $j$ is determined as above
and $y=T^{-1}_{y_i}(x)$. Put $a=\min\{a_i: 1\leq i\leq M\}$ and then
$\mathcal{L}_\varphi(1)(x)\geq e^a.$ According to the discussion
before Lemma \ref{lem2.4}, we complete the proof of Lemma
\ref{lem2.4}. \qed

\begin{thm}\label{thm2.6+} Let $T, \varphi, \mu$ and $\lambda$ be as
in Theorem \ref{thm2.1}. Assume that
\begin{equation}\label{2.5+}\lambda^{-n}\mathcal{L}_\varphi^n(g)\rightrightarrows h,\ {\rm
as}\ n\rightarrow\infty\end{equation} for a $g\in
\mathcal{C}(\widehat{X})$ with $g\geq 0$, $\mu(g)=1$ and a $h\in
\mathcal{C}(\widehat{X})$ with $h(x)>0, x\in \widehat{X}$. Then
$m=h\cdot \mu$ is an invariant measure and $\mu(h)=1$ and
$\mathcal{L}_\varphi(h)=\lambda h$.\end{thm}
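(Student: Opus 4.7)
The three assertions $\mu(h)=1$, $\mathcal{L}_\varphi(h)=\lambda h$, and $T$-invariance of $m=h\cdot\mu$ will be derived in sequence, relying on the duality $\mathcal{L}_\varphi^*(\mu)=\lambda\mu$ and on two elementary properties of the transfer operator: $\mathcal{L}_\varphi$ is a bounded linear operator on $\mathcal{C}(\widehat{X})$ (since $\|\mathcal{L}_\varphi(f)\|\leq\|f\|\sup_{x\in X}\sum_{T(y)=x}e^{\varphi(y)}<\infty$ by summability of $\varphi$), and it satisfies the multiplicative identity
\[
\mathcal{L}_\varphi\bigl(u\cdot(v\circ T)\bigr)(x)=v(x)\,\mathcal{L}_\varphi(u)(x),
\]
immediate from the definition.

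For $\mu(h)=1$, I iterate the duality to obtain $\mu(\mathcal{L}_\varphi^n(g))=\lambda^n\mu(g)=\lambda^n$, whence $\mu(\lambda^{-n}\mathcal{L}_\varphi^n(g))=1$ for every $n$. Since the convergence in (\ref{2.5+}) is uniform and $\mu$ is a probability measure, passing to the limit yields $\mu(h)=1$. For $\mathcal{L}_\varphi(h)=\lambda h$, I apply the bounded operator $\mathcal{L}_\varphi$ to (\ref{2.5+}), which preserves uniform convergence: on the one hand
\[
\mathcal{L}_\varphi\bigl(\lambda^{-n}\mathcal{L}_\varphi^n(g)\bigr)=\lambda^{-n}\mathcal{L}_\varphi^{n+1}(g)\rightrightarrows\mathcal{L}_\varphi(h),
\]
while on the other hand the same quantity equals $\lambda\cdot\lambda^{-(n+1)}\mathcal{L}_\varphi^{n+1}(g)\rightrightarrows\lambda h$; comparing the two limits gives the eigenfunction relation.

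The invariance of $m$ is the main point, and follows by combining the two preceding facts with the multiplicative identity. For $f\in\mathcal{C}(\widehat{X})$ and $x\in X$,
\[
\mathcal{L}_\varphi\bigl(h\cdot(f\circ T)\bigr)(x)=f(x)\,\mathcal{L}_\varphi(h)(x)=\lambda\, f(x)\,h(x);
\]
integrating against $\mu$ and using $\mathcal{L}_\varphi^*(\mu)=\lambda\mu$ on the left gives
\[
\lambda\,\mu\bigl(h\cdot(f\circ T)\bigr)=\mu\bigl(\mathcal{L}_\varphi(h\cdot(f\circ T))\bigr)=\lambda\,\mu(fh),
\]
so $m(f\circ T)=\mu(h\cdot(f\circ T))=\mu(fh)=m(f)$, which is $T$-invariance. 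I do not expect any serious obstacle; the only technical point is to verify that $h\cdot(f\circ T)$ is a legitimate argument for $\mathcal{L}_\varphi$, i.e.\ continuous and bounded on $X_0$ so that the defining series converges, which it clearly is. Thus the proof reduces, once the hypothesized uniform convergence is available, to a clean consequence of the duality and of the multiplicative behaviour of the transfer operator.
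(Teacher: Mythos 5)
Your derivations of $\mu(h)=1$ and of $\mathcal{L}_\varphi(h)=\lambda h$ coincide with the paper's: integrate the duality $\mu(\mathcal{L}_\varphi^n(g))=\lambda^n$ and pass to the limit, then apply the (continuous) operator $\mathcal{L}_\varphi$ to the uniformly convergent sequence and compare the two expressions for $\lambda^{-n}\mathcal{L}_\varphi^{n+1}(g)$. For the invariance of $m$, however, you take a genuinely different and shorter route than the paper, and it is exactly there that a gap appears. Your key step applies the eigenmeasure identity $\mathcal{L}_\varphi^*(\mu)=\lambda\mu$ to the function $h\cdot(f\circ T)$. That identity is an equation in $\mathcal{C}(\widehat{X})^*$, so a priori it only yields $\mu(\mathcal{L}_\varphi(u))=\lambda\mu(u)$ for $u\in\mathcal{C}(\widehat{X})$. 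In the setting of this paper $T$ is defined only on the open dense subset $X_0$ of $X\subset\widehat{X}$ and in general admits no continuous extension to $\widehat{X}$; hence $f\circ T$, and with it $h\cdot(f\circ T)$, is merely a bounded continuous function on $X_0$, not an element of $\mathcal{C}(\widehat{X})$. Checking that the series defining $\mathcal{L}_\varphi(h\cdot(f\circ T))$ converges (your ``only technical point'') is not the issue; the issue is that you have no license to evaluate the dual-operator identity at this function. To repair the step you would have to extend the relation $\mu(\mathcal{L}_\varphi(u))=\lambda\mu(u)$ from $\mathcal{C}(\widehat{X})$ to bounded Borel (or bounded continuous-on-$X_0$) functions, which is essentially a change-of-variables argument using the Jacobian property of Theorem \ref{thm2.1}(1) plus an approximation/monotone-class step — none of which is in your write-up. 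A smaller point in the same vein: the paper's notion of invariance is $m(T^{-1}B)=m(B)$ for Borel $B$, so even granting $m(f\circ T)=m(f)$ for all $f\in\mathcal{C}(\widehat{X})$ you still need a Riesz-representation step and the observation (obtained from $f\equiv 1$) that $m$ gives full mass to $X_0\cap T^{-1}(X)$.

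It is instructive to see how the paper's proof sidesteps precisely this difficulty. It introduces $\psi=\varphi-\log\lambda+\log h-\log h\circ T$ and the identity $\mathcal{L}_\psi(f)=(\lambda h)^{-1}\mathcal{L}_\varphi(hf)$; since $h>0$ is continuous on $\widehat{X}$ and $hf\in\mathcal{C}(\widehat{X})$, every function to which the dual operator is applied in the chain $\mathcal{L}_\psi^*(m)(f)=\mu(h\mathcal{L}_\psi(f))=\mu(\lambda^{-1}\mathcal{L}_\varphi(hf))=\mu(hf)=m(f)$ lies in $\mathcal{C}(\widehat{X})$. The price is that one must verify that $(T,\psi)$ is admissible (condition (1d) for $\psi$, which the paper checks using $a=\min h>0$) so that Lemma \ref{thm2.3} applies and converts $\mathcal{L}_\psi^*(m)=m$ into genuine $T$-invariance; the measure-theoretic subtleties you are skipping are absorbed into that lemma. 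So your plan is salvageable, but as written the crucial duality step is unjustified in the generality of this paper, and the admissibility/Lemma \ref{thm2.3} machinery (or an explicit Jacobian argument replacing it) cannot simply be omitted.
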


{\bf Proof.}\ \ It is obvious that
$\mu(\lambda^{-n}\mathcal{L}_\varphi^n(g))=1$ for each $n$, and so
$\mu(h)=1.$
$\mathcal{L}_\varphi(\lambda^{-n}\mathcal{L}_\varphi^n(g))=\lambda^{-n}\mathcal{L}_\varphi^{n+1}(g)$
converges $\mathcal{L}_\varphi(h)$ and $\lambda h$ and hence
$\mathcal{L}_\varphi(h)=\lambda h$.

Set
$$\psi=\varphi-\log\lambda+\log
h-\log h\circ T.$$ We first of all establish the fundamental
equation: $\forall\ f\in\mathcal{C}(\widehat{X})$,
\begin{eqnarray}\label{1} \mathcal{L}_\psi^n(f)(x)&=&\sum_{T^n(y)=x}f(y)\exp
S_n\psi(y)\nonumber\\
&=&{1\over\lambda^nh(x)}\sum_{T^n(y)=x}h(y)f(y)\exp
S_n\varphi(y)\nonumber\\
&=&{1\over\lambda^nh(x)}\mathcal{L}^n_\varphi(hf)(x).\end{eqnarray}
Specially, $\mathcal{L}_\psi(1)(x)=(\lambda
h(x))^{-1}\mathcal{L}_\varphi(h)(x)\equiv 1$ and
$h\mathcal{L}_\psi(f)=\lambda^{-1}\mathcal{L}_\varphi(hf).$ Now we
show that $(T,\psi)$ is admissible. It suffices to check (1d) for
$\psi$. Since $h\in \mathcal{C}(\widehat{X})$, in terms of the
admissible property of $(T,\varphi)$ we have
$$\sum_{T(y)=x}\left|h(y)e^{\varphi(y)}-h(y')e^{\varphi(y')}\right|\rightarrow
0,\ {\rm as}\ d(x,x')\rightarrow 0.$$ By a simple calculation, we
have
$$\sum_{T(y)=x}\left|e^{\psi(y)}-e^{\psi(y')}\right|=\sum_{T(y)=x}\left|{h(y)e^{\varphi(y)}\over \lambda h(x)}-
{h(y')e^{\varphi(y')}\over \lambda h(x')}\right|$$
$$\leq\frac{1}{\lambda}\frac{1}{h(x)}\sum_{T(y)=x}\left|h(y)e^{\varphi(y)}-
h(y')e^{\varphi(y')}\right|+\frac{1}{\lambda}\frac{|h(x)-h(x')|}{h(x)h(x')}\sum_{T(y)=x}h(y')e^{\varphi(y')}$$
$$=\frac{1}{\lambda}\frac{1}{h(x)}\sum_{T(y)=x}\left|h(y)e^{\varphi(y)}-
h(y')e^{\varphi(y')}\right|+\frac{1}{\lambda}\frac{|\mathcal{L}_\varphi(h)(x)-\mathcal{L}_\varphi(h)(x')|}{h(x)}$$
$$\leq\frac{2}{\lambda}\frac{1}{a}\sum_{T(y)=x}\left|h(y)e^{\varphi(y)}-h(y')e^{\varphi(y')}\right|,$$
where $a=\min\{h(x): x\in\widehat{X}\}>0$, and this yields that
$(T,\psi)$ is admissible.

To prove the invariance of the measure $m$, in terms of Lemma
\ref{thm2.3} we only prove the equation $\mathcal{L}^*_\psi(m)=m$.
Actually, for $f\in \mathcal{C}(\widehat{X})$ we have
$$\mathcal{L}^*_\psi(m)(f)=m(\mathcal{L}_\psi(f))=\mu(h\mathcal{L}_\psi(f))=
\mu(\lambda^{-1}\mathcal{L}_\varphi(hf))$$
$$=\lambda^{-1}\mathcal{L}^*_\varphi(\mu)(hf)=\mu(hf)=m(f).$$
Thus we complete the proof of Theorem \ref{thm2.6+}.\qed

Therefore, the crucial point to look for an invariant measure which
is equivalent to $\mu$ is (\ref{2.5+}), that is, uniform convergence
of $\{\lambda^{-n}\mathcal{L}_\varphi^n(g)\}$ for some
$g\in\mathcal{C}(\widehat{X})$ with $\mu(g)=1$. However, we do not
know if the equicontinuity of $\{\mathcal{L}_\psi^n(g)\}$ with
$\psi=\varphi-\log\lambda$ implies uniform convergence of
$\{\mathcal{L}_\psi^n(g)\}$. Obviously, the limit function $h$ is an
element of $\mathcal{C}(\widehat{X})$. We consider the conditions
under which $h(x)>0, x\in \widehat{X}$.

\

(1f) \textsl{$\{T^n\}$ has equivalently uniformly covering property:
there exists a $\delta>0$ such that for each $x\in X$ and each
$n\in\mathbb{N},$ $T^{-n}(B_X(x,\delta))$ can be written uniquely as
a disjoint union of a finite or countable number of open subsets
$A^{(n)}_i(x)\ (1\leq i\leq N_n\leq \infty)$ of $X_0$ and for each
$i$, $T^n$ is a homeomorphism of $A^{(n)}_i(x)$ onto
$B_X(x,\delta)$.}

\

(1g) \textsl{There exists a positive number $C_\varphi$ such that
\begin{equation}\label{2.3}
C_\varphi(x,x')=\sup_{n\geq
1}\sup_{T^n(y)=x}\left|S_n\varphi(y)-S_n\varphi(y')\right|\leq
C_\varphi\end{equation} whenever $d(x,x')<\delta$ for arbitrary pair
$x$ and $x'$ in $X$ and $C_\varphi(x,x')\rightarrow 0$ as
$d(x,x')\rightarrow 0$.}

\

The pair $(T,\varphi)$ is called dynamically admissible if
$(T,\varphi)$ is admissible and satisfies (1f) and (1g).

\begin{lem}\label{lem2.5}\ \ Let all assumptions of Theorem \ref{thm2.6+} with $g(x)>0, x\in
\widehat{X}$, but "$h(x)>0, x\in \widehat{X}$" hold. Assume that
(1g) holds and for each $x\in X$, $\cup_{n=0}^\infty T^{-n}(x)$ is
dense in $X$. Then $h(x)>0, x\in \widehat{X}$.
\end{lem}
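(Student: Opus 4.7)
The plan is to establish $h>0$ on $\widehat{X}$ by combining a local comparability estimate for $h$ (coming from (1f) and (1g)) with the backward density hypothesis. First I would record that since $g\geq g_0>0$ on the compact $\widehat{X}$ and each $\lambda^{-n}\mathcal{L}_\varphi^n(g)\geq 0$, the uniform limit $h$ is nonnegative; and since $\mu(h)=1$, it is not identically zero. Continuity of $h$ together with density of $X$ in $\widehat{X}$ then let me pick $x_0\in X$, $r_0<\delta$ and $h_0>0$ with $h\geq h_0$ on $B_X(x_0,r_0)$.

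The main step, and the place I expect the real work to be, is the following comparability: for any $x,x'\in X$ with $d(x,x')<\delta$, $h(x)\leq Mh(x')$ where $M=(\|g\|/g_0)e^{C_\varphi}$. To prove it I would use (1f) to pair preimages: each component of $T^{-n}(B_X(x,\delta))$ is mapped homeomorphically onto $B_X(x,\delta)$ by $T^n$, so it contains exactly one preimage $y$ of $x$ and exactly one preimage $y'$ of $x'$, giving a canonical bijection between $T^{-n}(x)$ and $T^{-n}(x')$. For each paired couple, (1g) supplies $|S_n\varphi(y)-S_n\varphi(y')|\leq C_\varphi$, and the uniform bounds $g_0\leq g\leq\|g\|$ give $g(y)e^{S_n\varphi(y)}\leq M\,g(y')e^{S_n\varphi(y')}$. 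Summing over pairs yields $\mathcal{L}_\varphi^n(g)(x)\leq M\mathcal{L}_\varphi^n(g)(x')$, and dividing by $\lambda^n$ and passing to the limit gives $h(x)\leq Mh(x')$. A routine continuity/density argument then extends this inequality to every $x,x'\in\widehat{X}$ with $d(x,x')<\delta$, and by symmetry also $h(x')\leq Mh(x)$.

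Finally I would use backward density to conclude. For $y\in X$, density of $\cup_{n\geq 0}T^{-n}(y)$ in $X$ produces $n\geq 0$ and $z\in T^{-n}(y)\cap B_X(x_0,r_0)$, whence $h(z)\geq h_0$. If $n=0$ then $y=z$ and $h(y)\geq h_0$; otherwise the eigenvalue identity $\mathcal{L}_\varphi^n(h)=\lambda^n h$ gives $\lambda^n h(y)\geq h(z)e^{S_n\varphi(z)}\geq h_0 e^{S_n\varphi(z)}>0$. Either way $h(y)>0$ for every $y\in X$. For a general $x\in\widehat{X}$, density of $X$ in $\widehat{X}$ yields $y\in X$ with $d(x,y)<\delta$; by the previous step $h(y)>0$, and by the extended comparability $h(x)\geq h(y)/M>0$. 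The principal difficulty I anticipate is the comparability step, which requires carefully setting up the preimage pairing through (1f) and extending the inequality from $X$ to $\widehat{X}$ by a density/continuity approximation respecting the strict inequality $d(x,x')<\delta$.
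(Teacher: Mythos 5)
Your proposal is correct and follows essentially the same route as the paper: the key step in both is the comparability estimate $h(x)\leq Me^{C_\varphi}h(x')$ for $d(x,x')<\delta$ obtained from (1g) together with the two-sided bounds on $g$, combined with the eigenvalue identity $\mathcal{L}_\varphi^n(h)=\lambda^n h$ and backward density to control $h$ on $X$, and a continuity/approximation argument to reach $\widehat{X}\setminus X$. The only difference is presentational: the paper runs the backward-density step as a contradiction ($h(x)=0$ at a point of $X$ forces $h\equiv 0$ on a dense set, contradicting $\mu(h)=1$), whereas you run the contrapositive directly by pulling a preimage into a ball where $h$ is bounded below.
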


{\bf Proof.}\ \ Suppose that for an point $x\in X, h(x)=0$. Since
$\mathcal{L}^n_\varphi(h)(x)=\lambda^nh(x)=0$, we have $h(y)=0,
\forall y\in T^{-n}(x)$ and further, $h(y)=0$ on a dense subset of
$X$. This implies that $h(y)\equiv 0$ on $\widehat{X}$, which
contracts $\mu(h)=1.$ It is obvious that for any pair $x$ and $x'$
in $X$ with $d(x,x')<\delta$, in terms of (\ref{2.3}) we have
$$\lambda^{-n}\mathcal{L}_\varphi^n(g)(x')\leq
Me^{C_\varphi}\lambda^{-n}\mathcal{L}_\varphi^n(g)(x),$$ where $M$
is a constant satisfying $g(y)\leq Mg(y')$, whose existence is
confirmed by the condition "$g(x)>0, x\in\widehat{X}$", so that
$h(x')\leq Me^{C_\varphi}h(x).$ Now suppose that $h(x)=0$ for a
point $x\in\widehat{X}\setminus X$. Take a point $x'\in X$ with
$d(x,x')<\delta/2$ and a sequence $\{x_n\}$ in $X$ such that
$d(x_n,x)\rightarrow 0$ as $n\rightarrow \infty$. For all large $n$,
$d(x_n,x')<\delta$, and thus $h(x')\leq
e^{C_\varphi}h(x_n)\rightarrow 0$ as $n\rightarrow \infty$, and so
$h(x')=0$, a contradiction will be derived as above.\qed

Up to now we have not yet used the expanding property for $T$, that
is, (1c*) in the results we have previously attained. However, we
need the condition (1c*) to confirm the existence of the function
$h$ in Theorem \ref{thm2.6+} and so of the invariant measure, which
was proved by Walters in \cite{Walters}. We remark on (1c*), (1f)
and (1g). It is clear that (1f) follows directly from (1b) and
(1c*), and (1g) implies (1d). The conditions (1a), (1b), (1c*), (1e)
and (1g) are exactly those listed in Walters \cite{Walters}. The
following is Walters' main result.

\begin{thm}\label{thm2.6}  Let the pair
$(T,\varphi)$ be dynamically admissible and $T$ satisfy (1e) and
(1c*). Let $\mu$ and $\lambda$ be as in Theorem \ref{thm2.1}. Then

(1) the pair $(\lambda,\mu)$ is uniquely determined by the
conditions $\lambda>0, \mu\in\mathcal{M}(X)$ and
$\mathcal{L}_\varphi^*(\mu)=\lambda\mu$;

(2) there exists a $h\in \mathcal{C}(\widehat{X})$ with $h>0$ such
that $\mu(h)=1,\ \mathcal{L}_\varphi(h)=\lambda h$;

(3) $h$ satisfies $h(x)\leq e^{C_\varphi(x,x')}h(x')$ and $h$ is
uniquely determined by this condition and the properties $h>0,
\mu(h)=1$ and $\mathcal{L}_\varphi(h)=\lambda h$;

(4) $\lambda^{-n}\mathcal{L}_\varphi^n(f)\rightrightarrows h\cdot
\mu(f), \forall f\in \mathcal{C}(\widehat{X})$;

(5) $m=h\mu$ is a Gibbs invariant measure for $T$ and
$\mathcal{L}^*_\psi(m)=m$, where $$\psi=\varphi-\log\lambda+\log
h-\log h\circ T.$$

(6)
$\log\lambda=\sup\{\nu(I_\nu(\mathcal{B}|T^{-1}\mathcal{B})+\varphi):\nu\in\mathcal{M}(X,T)\}$
and $m$ is the equilibrium state.

(7) $m$ and $\mu$ are positive on nonempty open sets and have no
atoms.
\end{thm}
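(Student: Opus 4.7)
The plan is to assemble the theorem from the tools already prepared earlier in the section. Theorem \ref{thm2.1} supplies $\mu\in\mathcal{M}(\widehat{X})$ and $\lambda>0$ with $\mathcal{L}_\varphi^*(\mu)=\lambda\mu$, and a routine duality computation against $1_{X\setminus T^{-1}X}$ gives $\mu(X\setminus T^{-n}X)=0$ for each $n$, so Lemma \ref{thm2.2} together with (1e) places $\mu$ in $\mathcal{M}(X)$. The central reduction is Theorem \ref{thm2.6+}: once the uniform-convergence assertion (4) is established, it automatically yields $\mathcal{L}_\varphi(h)=\lambda h$, $\mu(h)=1$, and the invariance of $m=h\cdot\mu$ with $\mathcal{L}_\psi^*(m)=m$ for the $\psi$ displayed in (5). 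Thus the heart of the argument is (4).

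To prove (4), I would first establish equicontinuity of the family $\{\lambda^{-n}\mathcal{L}_\varphi^n(f):n\geq 0\}$ for each $f\in\mathcal{C}(\widehat{X})$. For $x,x'\in X$ with $d(x,x')<\delta$, the uniform covering (1f) matches the preimages $y\in T^{-n}(x)$ to $y'\in T^{-n}(x')$ bijectively through the branches $A_i^{(n)}(x)$, while (1c*) forces $d(y,y')\leq d(x,x')$ uniformly in $n$; the distortion bound (1g) then gives $|S_n\varphi(y)-S_n\varphi(y')|\leq C_\varphi(x,x')\to 0$ as $d(x,x')\to 0$. Combining these with uniform continuity of $f$ on $\widehat{X}$ and the two-sided comparison $\mathcal{L}_\varphi^n(1)(x)\asymp\lambda^n$ coming from (1g) and (\ref{2.2+}) yields
\[
\bigl|\lambda^{-n}\mathcal{L}_\varphi^n(f)(x)-\lambda^{-n}\mathcal{L}_\varphi^n(f)(x')\bigr|\longrightarrow 0
\]
uniformly in $n$ as $d(x,x')\to 0$. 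Arzel\`a--Ascoli extracts a uniformly convergent subsequence for $f=1$, whose limit $h$ is strictly positive by Lemma \ref{lem2.5} (whose hypothesis that $\bigcup_n T^{-n}(x)$ is dense in $X$ is supplied by (1e)). To upgrade to full convergence for every $f$ and to identify the limit, I would pass through the normalized operator $\mathcal{L}_\psi$ via identity (\ref{1}): one checks $\mathcal{L}_\psi(1)\equiv 1$ and the admissibility of $(T,\psi)$ (as done inside Theorem \ref{thm2.6+}), so Theorem \ref{thm2.4} yields $\mathcal{L}_\psi^n(\tilde f)\rightrightarrows m(\tilde f)$ on equicontinuous orbits, which translates back to $\lambda^{-n}\mathcal{L}_\varphi^n(f)\rightrightarrows h\cdot\mu(f)$.

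With (4) in hand the remaining items follow briskly. The distortion inequality in (3) is obtained by passing to the limit in the same inequality for $\lambda^{-n}\mathcal{L}_\varphi^n(1)$, and uniqueness of $h$ among positive eigenfunctions with $\mu(h)=1$ is then forced by (4). Uniqueness of $(\lambda,\mu)$ in (1) follows from the observation that any second such $(\lambda',\mu')$ would also realize the limit in (4), forcing $\lambda'=\lambda$ and then $\mu'=\mu$. The variational formula in (6) is Lemma \ref{thm2.3} applied to $(T,\psi)$, with $m$ identified as the equilibrium state. Finally (7) combines backward density (1e) and $h>0$ to give positivity on nonempty open sets, while the strict expansion (1c*) rules out atoms (any atom would have to persist under pullback along infinitely many spreading preimage branches, contradicting finiteness of $\mu$). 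The principal obstacle is the equicontinuity-and-identification step in (4): this is precisely where dynamical admissibility, the expansion (1c*), and the uniform covering (1f) must be fitted together to produce what effectively amounts to a spectral-gap statement for $\mathcal{L}_\psi$ on $\mathcal{C}(\widehat{X})$.
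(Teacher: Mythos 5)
Your overall architecture matches the paper's: equicontinuity from (1g)/(1c*), positivity of $h$ via Lemma \ref{lem2.5}, and the passage through the normalized operator $\mathcal{L}_\psi$, identity (\ref{1}) and Theorem \ref{thm2.4} to obtain the convergence in (4) and the invariance in (5). But there is a genuine gap at the one step where you diverge from the paper: the construction of the eigenfunction $h$. You extract, via Arzel\`a--Ascoli, a uniformly convergent subsequence $\lambda^{-n_k}\mathcal{L}_\varphi^{n_k}(1)\rightrightarrows h$ and then immediately use $\mathcal{L}_\varphi(h)=\lambda h$ (you need it to define $\psi$ with $\mathcal{L}_\psi(1)\equiv 1$, to make (\ref{1}) an identity, and to invoke Lemma \ref{lem2.5}). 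A subsequential limit of the normalized iterates is \emph{not} automatically an eigenfunction: applying $\lambda^{-1}\mathcal{L}_\varphi$ to the subsequence gives convergence of $\lambda^{-(n_k+1)}\mathcal{L}_\varphi^{n_k+1}(1)$ to $\lambda^{-1}\mathcal{L}_\varphi(h)$, but the shifted indices $n_k+1$ need not select the same limit point as $n_k$, so you cannot conclude $\lambda^{-1}\mathcal{L}_\varphi(h)=h$. Theorem \ref{thm2.6+} only runs in the direction "full convergence $\Rightarrow$ eigenfunction", so it cannot be used to close this loop. The positivity step is also circular as written: the proof of Lemma \ref{lem2.5} uses $\mathcal{L}_\varphi^n(h)(x)=\lambda^n h(x)$, i.e.\ it already presupposes the eigenfunction property you have not yet established.

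The paper avoids this by a different device: it introduces the convex set $\Lambda=\{f\ge 0,\ \mu(f)=1,\ f(x)\le e^{C_\varphi(x,x')}f(x')\ \text{for}\ d(x,x')<\delta_0\}$, shows (using (1c*) so that preimage branches do not increase distances, hence the distortion inequality propagates) that $\lambda^{-1}\mathcal{L}_\varphi$ maps $\Lambda$ into $\Lambda$, and applies the Schauder--Tychonoff fixed-point theorem on the compact convex set $\Lambda$ to produce $h$ with $\mathcal{L}_\varphi(h)=\lambda h$ directly. Your route can be repaired, e.g.\ by replacing $\lambda^{-n}\mathcal{L}_\varphi^n(1)$ with the Ces\`aro averages $\frac{1}{N}\sum_{n=0}^{N-1}\lambda^{-n}\mathcal{L}_\varphi^n(1)$ (still equicontinuous and uniformly bounded, and any subsequential limit $h$ satisfies $\lambda^{-1}\mathcal{L}_\varphi(h)-h=\lim_N\frac{1}{N}(\lambda^{-N}\mathcal{L}_\varphi^N(1)-1)=0$), but some such averaging or fixed-point argument must be supplied; as written the proposal does not produce the eigenfunction on which everything else depends.
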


{\bf Proof.}\ \ For the completeness we state the proof of Theorem
\ref{thm2.6}. It suffices to prove (2), (4) and (5). Consider a
subspace $\Lambda$ of $\mathcal{C}(\widehat{X})$: for a fixed
positive number $\delta_0<\delta$,
$$\Lambda=\{f\in \mathcal{C}(\widehat{X}):\ f\geq 0,\ \mu(f)=1\ {\rm
and}\ f(x)\leq e^{C_\varphi(x,x')}f(x')$$
$${\rm if}\ x,
x' \in X\ {\rm and}\ d(x,x')<\delta_0\}.$$ The argument in the proof
of Theorem 8 of \cite{Walters} implies that $\Lambda$ is nonempty,
convex, closed, bounded and equicontinuous.

Now we want to prove that $\lambda^{-1}\mathcal{L}_\varphi$ is a
linear operator from $\Lambda$ onto $\Lambda$. For any
$f\in\Lambda$, it is easy to see that
$\lambda^{-1}\mathcal{L}_\varphi(f)\geq 0,
\mu(\lambda^{-1}\mathcal{L}_\varphi(f))=\mu(f)=1$. In terms of
(1c*), we have that for $x, x'\in X, d(x,x')<\delta_0$, we have
$d(y,y')<\delta_0$, where $y\in T^{-1}(x)$ and $y'=T^{-1}_y(x')$ and
therefore $f(y)\leq e^{C_\varphi(y,y')}f(y')$. Thus
\begin{eqnarray*}\lambda^{-1}\mathcal{L}_\varphi(f)(x)&=&\lambda^{-1}\sum_{T(y)=x}f(y)e^{\varphi(y)}\\
&\leq&\lambda^{-1}\sum_{T(y)=x}f(y')e^{C_\varphi(y,y')+\varphi(y)}\\
&\leq&\lambda^{-1}\sum_{T(y)=x}f(y')e^{\varphi(y')}e^{C_\varphi(y,y')+\varphi(y)-\varphi(y')}\end{eqnarray*}
\begin{eqnarray*}&\leq&e^{C_\varphi(x,x')}\lambda^{-1}\sum_{T(y')=x'}f(y')e^{\varphi(y')}\\
&\leq&e^{C_\varphi(x,x')}\lambda^{-1}\mathcal{L}_\varphi(f)(x').\end{eqnarray*}
Thus $\lambda^{-1}\mathcal{L}_\varphi(f)\in\Lambda$. Applying the
Schauder-Tychonoff fixed-point theorem yields that
$\lambda^{-1}\mathcal{L}_\varphi$ has a fixed point $h\in\Lambda$.
The property $h>0$ follows from Lemma \ref{lem2.5}. Therefore, (2)
has been proved.

To prove (4) and (5). Notice the expression of $\psi$. As in the
proof of Theorem 6 of \cite{Walters}, we can show that for any $f\in
\mathcal{C}(\widehat{X})$, $\{\mathcal{L}_\psi^n(f)\}$ is
equicontinuous. Actually, we have
\begin{eqnarray*}
&\ &|S_n\psi(y)-S_n\psi(y')|\leq|S_n\varphi(y)-S_n\varphi(y')|\\
&+&|\log h(y)-\log h(y')|+|\log h(x)-\log h(x')|\\
&\leq&C_\varphi(x,x')+\frac{2}{a}\sup\{|h(u)-h(v)|:\ d(u,v)\leq
d(x,x')\},\end{eqnarray*} where $a=\min\{h(x): x\in\widehat{X}\}$,
and hence $C_\psi(x,x')\rightarrow 0$ as $d(x,x')\rightarrow 0$.

Applying Theorem \ref{thm2.4} to $\psi$ instead of $\varphi$ yields
the existence of $m\in \mathcal{M}(X,T)$ with
$\mathcal{L}^*_\psi(m)=m$ and
$\mathcal{L}_\psi^n(f)\rightrightarrows m(f)$ as
$n\rightarrow\infty$. Since from (\ref{1})
$\mathcal{L}_\psi^n(f)=h^{-1}\lambda^{-n}\mathcal{L}_\varphi^n(hf)$,
we have $\lambda^{-n}\mathcal{L}_\varphi^n(hf)\rightrightarrows
h\cdot m(f)$ and so
$\lambda^{-n}\mathcal{L}_\varphi^n(f)\rightrightarrows h\cdot
m(f/h)$. Furthermore
$\mu(f)=\mu(\lambda^{-n}\mathcal{L}_\varphi^n(f))\rightrightarrows\mu(h\cdot
m(f/h))=m(f/h)$ and equivalently $m=h\cdot\mu.$ We have proved (4)
and (5). \qed

Let us remark on the subspace $\Lambda$ of
$\mathcal{C}(\widehat{X})$. For the fixed $\delta_0$, the
boundedness of $\Lambda$ can be proved without the condition (1c*),
while in the proof of that $\lambda^{-1}\mathcal{L}_\varphi$ becomes
a linear operator from $\Lambda$ onto $\Lambda$, the condition (1c*)
cannot be avoided. If we change the definition of $\Lambda$ with
$\delta_0$ replaced by a positive number $\delta(f)$ depending on
$f$, then we do not need (1c*) to prove that
$\lambda^{-1}\mathcal{L}_\varphi$ becomes a linear operator from
$\Lambda$ onto $\Lambda$, while the boundedness of $\Lambda$ cannot
be proved.

Now we complete the proof of  Theorem \ref{thm1.1}. First let us
recall Theorem \ref{thm1.1} says that

{\sl Let the pair $(T,\varphi)$ be admissible and for some fixed
$N\in\mathbb{N}$, $T^N$ satisfy (1c*) and (1g) for some $\delta_N$
and (1e). Then all the statements listed in Theorem \ref{thm2.6}
still hold.}

{\bf Proof of Theorem \ref{thm1.1}}. Since $(T,\varphi)$ be
admissible, in terms of Theorem \ref{thm2.1} the linear operator
$\mathcal{L}_\varphi$ of $\mathcal{C}(\widehat{X})$ to itself exists
and the corresponding $\mu$ and $\lambda$ exist. And in view of
Lemma \ref{lem2.2}, $(T^N,S_N\varphi)$ is admissible, and (1b) and
(1c*) for $T^N$ and for some $\delta'_N\leq \delta_N$ imply (1f) for
$T^N$ and $\delta'_N$. Thus $(T^N,S_N\varphi)$ is dynamically
admissible.

It suffices to prove (2) and (4) in Theorem \ref{thm2.6}. Since
$\mathcal{L}_\varphi^*(\mu)=\lambda\mu$, we have
$$\mathcal{L}^*_{S_N\varphi,T^N}(\mu)={\mathcal{L}_\varphi^*}^N(\mu)=\lambda^N\mu.$$
In terms of Theorem \ref{thm2.6}, there exists a $h\in
C(\widehat{X})$ with $h>0$ such that $\mu(h)=1,\
\mathcal{L}^N_\varphi(h)=\mathcal{L}_{S_N\varphi,T^N}(h)=\lambda^N
h$ and for each $f\in\mathcal{C}(\widehat{X})$
$$\lambda^{-nN}\mathcal{L}_\varphi^{Nn}(f)\rightrightarrows
h\cdot\mu(f),\ {\rm as}\ n\rightarrow\infty.$$ Thus as
$n\rightarrow\infty$, we have
$$\lambda^{-nN}\mathcal{L}_\varphi^{nN+1}(f)=\mathcal{L}_\varphi(\lambda^{-nN}\mathcal{L}_\varphi^{Nn}(f))
\rightrightarrows
\mathcal{L}_\varphi(h\cdot\mu(f))=\mu(f)\mathcal{L}_\varphi(h)$$ and
$$\lambda^{-nN}\mathcal{L}_\varphi^{Nn+1}(f)=\lambda^{-nN}\mathcal{L}_\varphi^{Nn}(\mathcal{L}_\varphi(f))
\rightrightarrows
h\cdot\mu(\mathcal{L}_\varphi(f))=h\cdot\mathcal{L}^*_\varphi\mu(f)=\mu(f)\lambda
h.$$ This implies immediately
$$\mathcal{L}_\varphi(h)=\lambda h,$$ that is, (2) has been proved.

(4) follows from the following implication: for each $0\leq i<N$, we
have
$$\lambda^{-nN-i}\mathcal{L}_\varphi^{nN+i}(f)=\lambda^{-i}\left(
\lambda^{-nN}\mathcal{L}_\varphi^{Nn}(\mathcal{L}_\varphi^i(f))\right)\rightrightarrows
\lambda^{-i}h\cdot\mu(\mathcal{L}_\varphi^i(f))=h\cdot\mu(f).$$ \qed

We remark on the conditions in Theorem \ref{thm1.1}. We cannot
deduce that $(T,\varphi)$ is admissible in terms of the dynamically
admissible property of $(T^N,S_N\varphi)$ and the conditions on
$T^N$ in Theorem \ref{thm1.1} and thus we cannot obtain
$\mathcal{L}_\varphi$, $\mu$ and $\lambda$.

Finally, we mention that the expanding property is not necessary for
the existence of conformal measure, while in the discussion of this
section it is necessary for the existence of an invariant measure
which is equivalent to the conformal measure.

\section{Bowen Formula on Invariant Sets}

As an application of the previous results, in this section, we
establish the Bowen formula on some special subsets of $X_0$ and
discuss the existences of conformal and invariant measures dealing
with the derivatives. Here generally, we do not require the metric
space $(X,d)$ is embedded into a compact metric space, while we
assume that $(X,d)$ is locally compact, that is to say, for each
$x\in X$ and $R>0$, $\overline{B(x,R)}$ is compact.

Define
\begin{equation}\label{1.1} D_dT(x)=\lim_{y\rightarrow
x}{d(T(y),T(x))\over d(y,x)}\end{equation} if the limit exists and
$D_dT(x)$ is called derivative of $T$ at $x$ with respect to the
metric $d$. We say that $T$ has bounded distortion on a subset $U$
of $X_0$ if $D_dT(x)$ exists at each point of $U$ and for some
$M=M(U)>0$, we have
$${D_dT(x)\over D_dT(y)}\leq M$$ for arbitrary pair $x$ and $y$
in $U$ and $M$ is named distortion constant. It is obvious that if
$T$ has the derivative $D_dT(x)$ in $X_0$, then for each $n\in
\mathbb{N}$ and each $x\in T^{-n}X$ we have
\begin{equation}\label{3.2}
D_dT^n(x)=\prod_{k=0}^{n-1}(D_dT)(T^k(x)).\end{equation}

If $X$ is a subset of the Riemann sphere $\widehat{\mathbb{C}}$,
consider a Riemannian metric $\tau:\tau(z)|dz|$. If $f(z)$ is
meromorphic on $X_0$, then the derivative of $f$ with respect to
$\tau$ at $z\in X_0$ is
$$D_\tau f(z)=|f'(z)|{\tau(f(z))\over \tau(z)}$$
and in particular, for $\tau(z)=(1+|z|^t)^{-1}$, we write
$D_tf(z)=D_\tau f(z)$. When $t=2$, $D_2f(z)$ is the derivative of
$f(z)$ with respect to the Riemann sphere metric, usually denoted by
$f^\times(z)$; When $t=0$, $D_0f(z)=|f'(z)|$.

Let $T:X_0\rightarrow X$ have the derivative on $X_0$. Consider the
following Poincar\'e sequence, for $t\geq 0$ and $a\in X$,
$$\mathcal{L}_{t,T}^n(a):=\sum_{T^n(z)=a}D_dT^n(z)^{-t}.$$ Actually,
$\mathcal{L}_{t,T}^n(a)=\mathcal{L}^n_{\varphi,T}(1)(a)$ with
$\varphi=-t\log D_dT(x)$ and for a fixed $m\in\mathbb{N},$
$S_m\varphi(x)=-t\log D_dT^m(x)$. Thus
$\mathcal{L}_{t,T^m}(a)=\mathcal{L}_{S_m\varphi,T^m}(1)(a)=\mathcal{L}^m_{t,T}(a)$.
If the confusion cannot occur, we simply write $\mathcal{L}_t(a)$
for $\mathcal{L}_{t,T}(a)$. And we write the (resp., upper and
lower) pressure of $T$ for $\varphi=-t\log D_dT(x)$ as $P(T,t)$
(resp., $\overline{P_a}(T,t)$ and $\underline{P_a}(T,t)$). If it is
finite, then $P(T,t)$ is a real function in $t$. The Bowen formula
is to reveal the relation between some $t$ and the Hausdorff
dimension of some set.

Following Kotus and Urbanski \cite{KotusUrbanski}, we introduce the
following concept.

\begin{defin}\label{def3.1}\ {\sl $T$ is called weak Walters
expanding (with expanding constant $C\geq 1$), provided that

(2a) $T$ satisfies (1a), that is, the set $T^{-1}(x)$ for each $x\in
X$ is at most countable;

(2b) For each $x\in X$ there exists a $\delta_x>0$ such that for
each $n\in\mathbb{N}$, $T^n$ is a homeomorphism of every component
of $T^{-n}(B(x,\delta_x))$ onto $B(x,\delta_x)$;

(2c) $\forall \varepsilon>0$ and $\forall x\in X$, $\exists
\delta_0$ with $0<\delta_0<\delta_x$ such that for each $y\in X_0$
with $T(y)=x$, once $d(x,x')<\delta_0$ for $x'\in X$, we have
$d(T^{-1}_y(x), T^{-1}_y(x'))<\varepsilon,$ where $T^{-1}_y$ is the
branch of the inverse of $T$ which sends $x$ to $y$;

(2d) For each $a\in X$, there exist $C(a)\geq 1$, $\varrho(a)>0$ and
$N(a)\geq 1$ such that for each $n$
$$d(T^{nN}(x),T^{nN}(y))\geq \varrho(a)C^n(a) d(x,y)$$
whenever $x$ and $y$ lie in a component of $T^{-nN}(B(a,\delta_a))$;

(2e) For an arbitrary point $x\in X_0$ and $\delta>0$, given a
compact subset $K$ of $X$ there exists a positive $M=M(K)$ such that
$K\subseteq T^M(B(x,\delta)\cap T^{-M}(X))$.

$C=\inf\{C(a):a\in X\}$ is called the expanding constant for $T$. If
$(X,d)$ is compact, then $T$ is called a Walters expanding map (with
the expanding constant $C$).}
\end{defin}

When $X$ is embedded into a compact metric space $\widehat{X}$, the
above conditions with $\delta=\inf\{\delta_x:\forall x\in X\}>0$,
$C=1$, $\varrho(a)=1$ and $N=1$ are those which Walters considered
(see Section 2). In this case we note that (2c) follows directly
from (2d) with $N=1$, but the implication is not available for
$N>1$. And (2c) is necessary for $\mathcal{L}_\varphi$ being a
linear operator from $\mathcal{C}(\widehat{X})$ to itself. The
Walters expanding maps were first named by Kotus and Urbanski
\cite{KotusUrbanski} with $C>1$ and with (2e) replaced by (1e) but
without (2c), that is to say, the definition here is different a bit
from the Kotus and Urbanski's. Actually, if $(X,d)$ is compact, then
(2e) is equivalent to (1e) and we can find a fixed $N$ independent
of $a$ in (2d) and if for each $a\in X$, $C(a)>1$, then $C>1$. In
the definition of Kotus and Urbanski with $X=\widehat{X}$, it seems
to allow $N>1$. Actually, we can use the metric $\widehat{d}$
defined by
$$\widehat{d}(x,y)=\sum_{k=0}^{N-1}\widehat{C}^{-k}d(T^k(x),T^k(y)),\
\widehat{C}=\sqrt[N]{C}.$$ It is easy to see that
$$\widehat{d}(T(x),T(y))\geq \widehat{C}\widehat{d}(x,y),$$
whenever $x$ and $y$ lie on a component of $T^{-1}(B(a,\delta)).$ We
seem unclear to understand how one could imply the inequality (1) in
\cite{KotusUrbanski} for $N>1$, for, although we have for $n=1$
$$|\phi(T^{-1}_u(y))-\phi(T^{-1}_u(z))|\leq
Ld^\beta(T^{-1}_u(y),T^{-1}_u(z))$$ in terms of the dynamically
H\"older continuous condition of $\phi$, but we cannot compare
$d(T^{-1}_u(y),T^{-1}_u(z))$ to $d(y,z)$. However using the metric
$\widehat{d}$, instead of $d$, is no problem.

\begin{defin}\label{def3.2}\ {\sl A continuous map $T:X_0\rightarrow X$ is called conformal if
the derivative of $T$ with respect to $d$ exists at each $x\in X_0$
and for each $n\in \mathbb{N}$, each $x\in X_0$ and some
$\delta_x>0$, $T^n$ has bounded distortion in each injective
component $A_j^{(n)}(x)$ of $T^{-n}$ over $B(x,\delta_x)$ with the
distortion constant only depending on $x$, denoted by $M(x)$, and
for arbitrary pair $y, y'\in A_j^{(n)}(x)$, there exists a point
$w\in A_j^{(n)}(x)$ such that
$$d(T^n(y),T^n(y'))\leq D_dT^n(w)d(y,y').$$}\end{defin}

The above inequality for $d$ implies one for $\widehat{d}$.
Obviously, (\ref{1add}) holds for $\varphi=-t\log D_dT(x)$ and
$\delta=\delta_x$ if for each $n\in \mathbb{N}$, each $x\in X_0$ and
some $\delta_x>0$, $T^n$ has uniformly bounded distortion mentioned
in Definition \ref{def3.2}. Therefore, if $T:X_0\rightarrow X$ is
conformal, (\ref{1add}) holds for every $x\in X$ and $\varphi=-t\log
D_dT(x)$ with $\delta=\delta_x$ and $K_n$ depending on $x$. The same
argument as in the proof of Theorem \ref{6add} produces the
following, where $\overline{P_a}(T,t)=\infty$ is allowed.

\begin{lem}\label{lem3.1}\ Let $T:X_0\rightarrow X$ satisfy (2a), (2b) and (2e) and be a conformal map.
Then the following statements hold:

(1) $\overline{P_a}(T,t)$ and $\underline{P_a}(T,t)$ are independent
of $a\in X$ and so we simply write $\overline{P}(T,t)$ and
$\underline{P}(T,t)$, in turn, for $\overline{P_a}(T,t)$ and
$\underline{P_a}(T,t)$;

(2) For a fixed $m$, $m\overline{P}(T,t)=\overline{P}(T^m,t)$ and
$m\underline{P}(T,t)=\underline{P}(T^m,t)$;

(3) If, in addition, $X$ is compact, then
$P(T,t)=\overline{P}(T,t)=\underline{P}(T,t)$.\end{lem}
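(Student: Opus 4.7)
The plan is to adapt the argument of Theorem~\ref{6add} to the present non-compact setting, with the uniform finite-cover step there replaced by~(2b) together with the weak backward density~(2e). Conformality gives the bounded-distortion analog of~\eqref{1add}: on each injective component of $T^{-n}$ over $B(x,\delta_x)$, Definition~\ref{def3.2} yields $|S_n\varphi(y)-S_n\varphi(y')|\le t\log M(x)$ with $\varphi = -t\log D_dT$, so $K_n \equiv t\log M(x)$ is constant in $n$ locally.

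For part (1), given $a, a' \in X_0$, I would apply~(2e) with base $a'$ and the compact singleton $K = \{a\}$ to produce $M_1 = M_1(a,a')$ and $b_1 \in B(a',\delta_{a'}) \cap T^{-M_1}(X)$ with $T^{M_1}(b_1) = a$. For each $z \in T^{-n}(a')$, the inverse branch $T^{-n}_z$ provided by~(2b) maps $b_1$ to a point of $T^{-(n+M_1)}(a)$; bounded distortion on the component containing $z$ and the chain rule~\eqref{3.2} give
\begin{equation*}
D_dT^{n+M_1}(T^{-n}_z(b_1))^{-t} \ge M(a')^{-t}\, D_dT^{M_1}(b_1)^{-t}\, D_dT^n(z)^{-t}.
\end{equation*}
Summing over $z$ and noting that the points $T^{-n}_z(b_1)$ are distinct preimages of $a$ under $T^{n+M_1}$ yields $\mathcal{L}^{n+M_1}_t(a) \ge c(a,a')\, \mathcal{L}^n_t(a')$, so $\overline{P_a}(T,t) \ge \overline{P_{a'}}(T,t)$. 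The symmetric application, swapping the roles of $a$ and $a'$, gives the reverse inequality, and the same reasoning handles $\underline{P}$.

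For part (2), the identity $\mathcal{L}^n_{t,T^m}(a) = \mathcal{L}^{mn}_{t,T}(a)$ comes from~\eqref{123} applied to $T^m$ together with the chain rule $D_dT^{mn}(y) = \prod_{k=0}^{n-1} D_dT^m(T^{mk}y)$, which gives $\overline{P_a}(T^m,t) \le m\overline{P}(T,t)$ immediately. For the reverse I would write
\begin{equation*}
\mathcal{L}^{mn+r}_t(a) = \sum_{z\in T^{-mn}(a)} D_dT^{mn}(z)^{-t}\,\mathcal{L}^r_t(z),\quad 0\le r<m,
\end{equation*}
and invoke the uniform bound $\sup_z \mathcal{L}^r_t(z) \le \bigl(\sup_z \mathcal{L}^1_t(z)\bigr)^r < \infty$ (valid when $-t\log D_dT$ is summable; otherwise both sides of~(2) are infinite and the identity is trivial). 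This yields $\mathcal{L}^{mn+r}_t(a) \le C_r\,\mathcal{L}^{mn}_t(a)$ with $C_r$ independent of $n$, hence $\limsup_n\frac{1}{mn+r}\log\mathcal{L}^{mn+r}_t(a) \le \limsup_n\frac{1}{mn}\log\mathcal{L}^{mn}_t(a)$ for each residue $r$, and the full $\limsup$ is attained along multiples of $m$. The same reasoning applies to $\liminf$.

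Part~(3) reduces to Theorem~\ref{6add} directly: compactness of $X$ allows a finite cover by balls $B(x_i,\delta_{x_i})$, yielding uniform $\delta = \min_i \delta_{x_i}$ and uniform distortion $M = \max_i M(x_i)$, so~\eqref{1add} holds with $K_n \equiv t\log M$, and~(1e) follows from~(2e) applied with $K = X$. Theorem~\ref{6add} then delivers $P(T,t) = \overline{P}(T,t) = \underline{P}(T,t)$. The main obstacle is the sub-multiplicativity step in part~(2): without summability (or an additional use of~(2e) to reduce the supremum over $T^{-mn}(a)$ to a local quantity), one cannot directly bound $\mathcal{L}^{mn+r}_t(a)$ by a constant multiple of $\mathcal{L}^{mn}_t(a)$, and this is precisely the point at which, as in Theorem~\ref{6add}'s Fekete-like argument, compactness or summability is implicitly invoked.
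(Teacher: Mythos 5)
The paper offers no separate proof of this lemma --- it only asserts that ``the same argument as in the proof of Theorem \ref{6add}'' gives it --- and your write-up is the correct localization of that argument for parts (1) and (3). In (1), using (2e) with $K=\{a\}$ to find $b_1\in B(a',\delta_{a'})\cap T^{-M_1}(a)$, pulling $b_1$ back through the inverse branches over $B(a',\delta_{a'})$ furnished by (2b) (each component contains exactly one point of $T^{-n}(a')$, so the points $T^{-n}_z(b_1)$ are distinct preimages of $a$ under $T^{n+M_1}$), and absorbing the derivative ratio into $M(a')^{t}$ via Definition \ref{def3.2} and the chain rule \eqref{3.2}, gives $\mathcal{L}^{n+M_1}_t(a)\ge c(a,a')\mathcal{L}^n_t(a')$ and hence (1), including the case of infinite pressure. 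Part (3) does reduce to Theorem \ref{6add} as you say (with the small extra remark that the hypothesis $\overline{P_a}(T,\varphi)<\infty$ of that theorem must be discharged: under compactness, failure of summability forces $\mathcal{L}_t\equiv\infty$ on some $\delta$-ball and then $\mathcal{L}^n_t(a)=\infty$ for all large $n$ by backward density, so $\overline{P}=\underline{P}=+\infty$ and (3) holds trivially).

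The genuine gap is where you locate it, in part (2), but your escape route for the bad case is not valid. You dispose of the non-summable case by claiming ``both sides of (2) are infinite,'' but non-summability of $\varphi_t=-t\log D_dT$ only says $\sup_{x\in X}\mathcal{L}_t(x)=\infty$; on a non-compact $X$ this is compatible with $\mathcal{L}_t$ being finite everywhere and with the backward orbit of $a$ avoiding the region where $\mathcal{L}_t$ is large, so that every $\mathcal{L}^k_t(a)$ is finite and $\overline{P_a}(T,t)<\infty$. In that situation $\sup_{z\in T^{-mn}(a)}\mathcal{L}^r_t(z)$ need not be bounded in $n$, the comparison $\mathcal{L}^{mn+r}_t(a)\le C_r\,\mathcal{L}^{mn}_t(a)$ fails, and nothing in your argument rules out the full $\limsup$ strictly exceeding the $\limsup$ along multiples of $m$. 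The honest fixes are either to assume summability of $\varphi_t$ (which is what every application of the lemma in Section 3 does), or to note that part (2) follows at once from the approximate subadditivity $a_{n+m}\le a_n+a_m+K_m+C$ established in the proof of Theorem \ref{6add} --- but that inequality needs a uniform comparison of $\mathcal{L}^m_t(w)$ over \emph{all} $w\in T^{-n}(a)$ with a fixed quantity, which is exactly what the finite cover by distortion balls, i.e.\ the compactness assumed only in (3), provides. As stated, with $\overline{P_a}(T,t)=\infty$ explicitly allowed and $X$ non-compact, part (2) is not fully proved by your argument.
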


In terms of Theorem \ref{thm2.1}, we give out conditions under which
there exists a $D_dT^t(x)$-conformal measure, which is simply
written into $t$-conformal measure if no confusion occurs.

\begin{thm}\label{thm3.1+}\ Let $(X,d)$ be compact and
let $T:X_0\rightarrow X$ satisfy (2a), (2b), (2c) and (2e) and be a
conformal map. If $(T,\varphi_t)$ with $\varphi_t=-t\log D_dT(x)$ is
admissible, then there exists a $\mu\in\mathcal{M}(X)$ such that
$\mathcal{L}_t^*(\mu)=\lambda\mu$ and
$\lambda=\mathcal{L}_t^*(\mu)(1)>0$ and further,
$\log\lambda=P(T,t)$, and if $P(T,t)=0$, then $T$ has a
$t$-conformal measure $\mu$ on $X$.\end{thm}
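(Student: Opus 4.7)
The plan is to specialize Theorem \ref{thm2.1} and Lemma \ref{2add} to the potential $\varphi_t = -t\log D_d T$. The bulk of the work is to upgrade the pointwise conditions (2a)--(2e) and the conformality in Definition \ref{def3.2} to the uniform conditions (1a)--(1d), (1e) and (\ref{1add}) required by those earlier results, using compactness of $X$ throughout.

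First I would verify the hypotheses of Theorem \ref{thm2.1} for $(T,\varphi_t)$. Condition (1a) is identical to (2a). For (1b), the open cover $\{B(x,\delta_x/2)\}_{x\in X}$ supplied by (2b) admits a finite subcover $\{B(x_i,\delta_{x_i}/2)\}_{i=1}^q$; setting $\delta=\tfrac12\min_i\delta_{x_i}$ and observing that for any $x\in X$ there is some $i$ with $B(x,\delta)\subset B(x_i,\delta_{x_i})$, the restriction of each injective component of $T^{-1}$ over $B(x_i,\delta_{x_i})$ to the preimage of $B(x,\delta)$ gives the desired uniform disjoint decomposition into homeomorphic components. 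Condition (1c) with uniform $\delta_0$ follows from (2c) by the same finite-cover argument. Admissibility of $(T,\varphi_t)$ (assumed) supplies (1d) and summability. Theorem \ref{thm2.1} then produces $\mu\in\mathcal{M}(X)$ and $\lambda>0$ with $\mathcal{L}_t^*(\mu)=\lambda\mu$ and Jacobian $\lambda D_dT^t=\lambda\exp(-\varphi_t)$.

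To obtain $\log\lambda=P(T,t)$, I would invoke Lemma \ref{2add}(2), which requires (\ref{1add}) with $K_n/n\to 0$ and backward $\delta$-density. The distortion inequality in Definition \ref{def3.2} yields, on each injective component $A_j^{(n)}(x)$ of $T^{-n}$ over $B(x,\delta_x)$, a distortion constant $M(x)$ for $D_dT^n$; by compactness of $X$ this can be made uniform, say $M=\max_i M(x_i)$, so that for $y,y'$ in a common component of $T^{-n}(B_X(x,\delta))$,
\[
|S_n\varphi_t(y)-S_n\varphi_t(y')|=t\left|\log\frac{D_dT^n(y)}{D_dT^n(y')}\right|\leq t\log M.
\]
Thus (\ref{1add}) holds with the constant sequence $K_n\equiv t\log M$, for which $K_n/n\to 0$ trivially. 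Backward $\delta$-density follows from (2e) applied with the compact set $K=X$ and any base point in $X_0$, together with the equivalence between (1e) and property $(\dagger)$ established in the remark preceding Theorem \ref{6add}. With both ingredients in hand Lemma \ref{2add}(2) gives $\log\lambda=P(T,t)$.

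Finally, if $P(T,t)=0$ then $\lambda=1$, so the Jacobian of $T$ with respect to $\mu$ is exactly $D_dT^t$, meaning $\mu$ is a $t$-conformal measure on $X$. The main obstacle I anticipate is not the conceptual step (which is essentially just invoking Theorem \ref{thm2.1} and Lemma \ref{2add}) but the bookkeeping needed to pass from the pointwise constants $\delta_x$, $M(x)$ in (2b), (2c) and Definition \ref{def3.2} to uniform ones valid on all of $X$; one must be careful that shrinking balls to the uniform $\delta$ preserves the homeomorphism structure of injective components and that the distortion estimate survives uniformly when the radius is reduced.
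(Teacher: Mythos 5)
Your proposal is correct and follows exactly the route the paper intends (the paper states Theorem \ref{thm3.1+} without a written proof, as a direct consequence of Theorem \ref{thm2.1} and Lemma \ref{2add}): admissibility yields $\mu$ and $\lambda$ with $\mathcal{L}_t^*(\mu)=\lambda\mu$; conformality plus compactness gives (\ref{1add}) with a constant sequence $K_n$; (2e) with $K=X$ gives backward $\delta$-density, so Lemma \ref{2add}(2) yields $\log\lambda=P(T,t)$; and $P(T,t)=0$ forces $\lambda=1$, making $\mu$ $t$-conformal. The uniformization of $\delta_x$ and $M(x)$ by a finite subcover, which you flag as the main bookkeeping, is precisely the step the paper glosses over with the remark preceding the theorem.
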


Next we discuss the conditions under which $(T,\varphi_t)$ is
admissible or dynamically admissible.

\begin{lem}\label{lem3.2}\ Let $T:X_0\rightarrow X$ satisfy (2a), (2b) and (2c) and be a conformal map.
Assume that $(X,d)$ is compact and $\varphi_t=-t\log D_dT(x)$ is
summable on $X$. Then the following statements hold.

(1) If
\begin{equation}C^{(1)}(x,x')=\sup_{T(y)=x}\left|1-{D_dT(y)\over
D_dT(y')}\right|\rightarrow 0,\ {\rm as}\ d(x,x')\rightarrow
0,\end{equation} then $(T,\varphi_t)$ is admissible;

(2) If \begin{equation}C(x,x')=\sup_{n\geq
1}\sup_{T^n(y)=x}\left|1-{D_dT^n(y)\over
D_dT^n(y')}\right|\rightarrow 0,\ {\rm as}\ d(x,x')\rightarrow
0,\end{equation} then $(T,\varphi_t)$ is dynamically admissible.
\end{lem}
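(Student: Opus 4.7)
The plan is to translate the \emph{local} hypotheses (2a), (2b), (2c) into the \emph{uniform} conditions (1a)--(1d) (and for part (2) also (1f), (1g)) by using compactness of $(X,d)$ together with some elementary arithmetic of the map $a\mapsto a^{-t}$.

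First I would handle the non-quantitative conditions. (1a) is literally (2a). For (1b), cover $X$ by finitely many balls $B(x_i,\delta_{x_i}/2)$ (possible since $X$ is compact) and take $\delta=\tfrac12\min_i\delta_{x_i}$; then for any $x\in X$ the ball $B(x,\delta)$ is contained in some $B(x_i,\delta_{x_i})$, so the injective components of $T^{-1}(B(x,\delta))$ are the restrictions of those for $T^{-1}(B(x_i,\delta_{x_i}))$ supplied by (2b). The same $\delta$ also yields (1f), since the all-$n$ form is already built into (2b). Condition (1c) is obtained from (2c) by the same finite-cover trick applied to $\delta_0(x_i,\varepsilon/2)$ in place of $\delta_{x_i}$.

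For part (1), the remaining work is (1d). Writing $a=D_dT(y)$, $b=D_dT(y')$ for $y'=T^{-1}_y(x')$, the factoring
$$|a^{-t}-b^{-t}|=b^{-t}\bigl|(a/b)^{-t}-1\bigr|$$
combined with the elementary inequality $|(1+u)^{-t}-1|\le C_t|u|$ for $|u|\le\tfrac12$ gives
$$\bigl|e^{\varphi_t(y)}-e^{\varphi_t(y')}\bigr|\le C_t\,D_dT(y')^{-t}\,\bigl|1-D_dT(y)/D_dT(y')\bigr|.$$
Summing over the branches $y\in T^{-1}(x)$ (which correspond bijectively to $y'\in T^{-1}(x')$), pulling out the supremum defining $C^{(1)}(x,x')$, and using summability of $\varphi_t$ to bound $\sum_{T(y')=x'}D_dT(y')^{-t}\le K$ uniformly yields
$$\sum_{T(y)=x}\bigl|e^{\varphi_t(y)}-e^{\varphi_t(y')}\bigr|\le C_tK\,C^{(1)}(x,x')\to 0$$
as $d(x,x')\to 0$. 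This is (1d), so $(T,\varphi_t)$ is admissible.

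For part (2), (1f) has already been noted and it remains to verify (1g). A direct computation gives
$$|S_n\varphi_t(y)-S_n\varphi_t(y')|=t\bigl|\log\bigl(D_dT^n(y)/D_dT^n(y')\bigr)\bigr|,$$
and once $d(x,x')$ is small enough that $C(x,x')<\tfrac12$, the ratio inside the logarithm lies in $(\tfrac12,\tfrac32)$, so $|\log(\cdot)|\le 2|1-\cdot|$ and hence $C_{\varphi_t}(x,x')\le 2t\,C(x,x')$. This simultaneously supplies the uniform bound on $C_{\varphi_t}(x,x')$ on a small ball and its vanishing as $d(x,x')\to 0$, which is exactly (1g). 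Therefore $(T,\varphi_t)$ is dynamically admissible. The main obstacle, as I see it, is the passage from local-in-$x$ parameters ($\delta_x$, $\delta_0(x,\varepsilon)$) to uniform ones by finite covering; the scalar estimates for $|a^{-t}-b^{-t}|$ and $|\log(a/b)|$ are routine, and the bookkeeping for the sum over branches is standard once summability is in hand.
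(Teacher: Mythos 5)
Your proposal is correct and follows essentially the same route as the paper: reduce (1d) to the scalar estimate $|D_dT(y)^{-t}-D_dT(y')^{-t}|=D_dT(y')^{-t}\,|1-(D_dT(y)/D_dT(y'))^{-t}|$ controlled by $C^{(1)}(x,x')$ together with summability, and reduce (1g) to $t\,|\log(D_dT^n(y)/D_dT^n(y'))|$ controlled by $C(x,x')$ (the paper uses $|\log r|\leq(1+r^{-1})|1-r|$ with the distortion constant where you use $|\log r|\leq 2|1-r|$ for $r$ near $1$ — an immaterial difference). Your explicit finite-cover passage from the local parameters $\delta_x$, $\delta_0(x,\varepsilon)$ of (2b), (2c) to the uniform ones in (1b), (1c), (1f) is a step the paper leaves implicit, and is correctly carried out.
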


{\bf Proof.} We can write
$${D_dT(y)\over
D_dT(y')}=1+CC^{(1)}(x,x')$$ with $|C|\leq 1$ and
$$\left({D_dT(y)\over
D_dT(y')}\right)^t=(1+CC^{(1)}(x,x'))^t=1+tC(1+o(1))C^{(1)}(x,x')$$
and so $$C^{(1)}_t(x,x')=\sup_{T(y)=x}\left|1-\left({D_dT(y)\over
D_dT(y')}\right)^t\right|\rightarrow 0,\ {\rm as}\
d(x,x')\rightarrow 0.$$

The admissible property of $(T,\varphi_t)$ follows from the
following implication:
$$\sum_{T(y)=x}\left|e^{\varphi_t(y)}-e^{\varphi_t(y')}\right|=
\sum_{T(y)=x}D_dT(y)^{-t}\left|1-\left({D_dT(y)\over
D_dT(y')}\right)^t\right|$$
$$\leq \mathcal{L}_t(1)(x)C^{(1)}_t(x,x')\leq \sup\{\mathcal{L}_t(1)(x):x\in X\}C_t^{(1)}(x,x')\rightarrow 0,$$
as $d(x,x')\rightarrow 0.$

The dynamically admissible property of $(T,\varphi_t)$ follows from
the following implication: for each $n$,
\begin{eqnarray*}&\ &|S_n\varphi_t(y)-S_n\varphi_t(y')|=t\left|\log{D_dT^n(y)\over
D_dT^n(y')}\right|\\
&\leq& t\left(\left|1-{D_dT^n(y)\over
D_dT^n(y')}\right|+\left|1-{D_dT^n(y')\over
D_dT^n(y)}\right|\right)\\
&=&t\left(1+{D_dT^n(y')\over
D_dT^n(y)}\right)\left|1-{D_dT^n(y)\over D_dT^n(y')}\right|\\
&\leq& t(1+M(x))C(x,x')\rightarrow 0\ (d(x,x')\rightarrow
0).\end{eqnarray*}\qed

Walters in \cite{Walters} and Kotus and Urbanski in
\cite{KotusUrbanski} considered the H\"older continuous condition
for the test function $\varphi$.

\begin{lem}\label{lem3.3}\ Let $T$ be a Walters expanding map with the expanding constant $C>1$ and
let $\varphi_s=-s\log D_dT(x)$ be summable and locally uniformly
H\"older continuous, that is, for $d(x,x')<\epsilon$, we have
$$|\varphi_s(x)-\varphi_s(x')|\leq Ld(x,x')^\sigma,$$ where $L$ and $\sigma$ are two positive constants.  Then
$(T^n,S_n\varphi_s)$ is dynamically admissible on $X$.\end{lem}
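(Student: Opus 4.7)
The plan is to verify each ingredient of dynamical admissibility---namely (1a)--(1d), (1f), and (1g)---for $(T,\varphi_s)$ first, and then promote the result to $(T^n,S_n\varphi_s)$. Since $X$ is compact and $T$ is Walters expanding, I begin by invoking the paper's remark following Definition \ref{def3.1} to pass to the auxiliary metric $\widehat{d}$, so that one may assume $N=1$ in (2d), i.e.\ $\widehat{d}(T(y),T(y'))\geq C\,\widehat{d}(y,y')$ with $C>1$ whenever $y,y'$ share a component of $T^{-1}(B(a,\delta_a))$. Compactness lets me extract a uniform $\delta>0$ from the local radii in (2b) by taking a finite subcover; since (2b) is formulated for every iterate $T^n$, this uniform $\delta$ yields both (1b) and (1f) simultaneously. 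Conditions (1a) and (1c) come from (2a) and (2c) respectively, and summability of $\varphi_s$ is a standing hypothesis. Only (1d) and (1g) genuinely call on the H\"older assumption.

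The crucial step is (1g). Fix $x,x'\in X$ with $\widehat{d}(x,x')<\delta$, an integer $n\geq 1$, and $y,y'$ in a single component of $T^{-n}(B(x,\delta))$ with $T^n(y)=x$, $T^n(y')=x'$. Iterating (2d) at the intermediate points gives
\[
\widehat{d}(T^k y, T^k y')\ \leq\ C^{-(n-k)}\,\widehat{d}(x,x'),\qquad 0\leq k\leq n.
\]
Combining with the local uniform H\"older bound $|\varphi_s(u)-\varphi_s(v)|\leq L\,\widehat{d}(u,v)^{\sigma}$ and summing produces
\[
|S_n\varphi_s(y)-S_n\varphi_s(y')|\ \leq\ L\sum_{k=0}^{n-1}\widehat{d}(T^k y,T^k y')^{\sigma}\ \leq\ L\,\widehat{d}(x,x')^{\sigma}\sum_{j=1}^{n}C^{-j\sigma}\ \leq\ \frac{L}{C^{\sigma}-1}\,\widehat{d}(x,x')^{\sigma},
\]
which is a uniform bound independent of $n$ and tends to $0$ as $\widehat{d}(x,x')\to 0$. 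This is precisely (1g) for $(T,\varphi_s)$. Condition (1d) then follows from the $n=1$ case of this estimate together with the elementary bound $|e^{a}-e^{b}|\leq e^{\max(a,b)}|a-b|$ and the summability of $\varphi_s$, since the resulting sum is dominated by $\mathcal{L}_{\varphi_s}(1)(x)$ times a vanishing factor.

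The passage to $(T^n,S_n\varphi_s)$ is then formal. Admissibility of $(T^n,S_n\varphi_s)$ is exactly Lemma \ref{lem2.2}. The family $\{(T^n)^k\}_{k\geq 0}$ is a subfamily of $\{T^m\}_{m\geq 0}$, so the uniform covering property (1f) for $T$ transfers verbatim to $T^n$. For (1g), the $k$-th Birkhoff sum of $S_n\varphi_s$ under $T^n$ at a point $y$ equals $S_{kn}\varphi_s(y)$ under $T$, so the bound from the second paragraph applies with $n$ replaced by $kn$ and delivers the same uniform constant $L/(C^{\sigma}-1)$. The main obstacle I foresee is that the uniformization step at the beginning---collapsing the local Walters data $(\delta_x,\varrho(a),N(a),C(a))$ to global constants via compactness and the metric $\widehat{d}$---must not destroy the H\"older condition on $\varphi_s$. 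Since $\widehat{d}$ involves the first $N-1$ iterates of $T$, preserving H\"older continuity (possibly with a different exponent or constant) requires either a modulus-of-continuity argument for each $T^{j}$ on compact $X$ or an implicit Lipschitz assumption on $T$; this is a subtlety that is easy to overlook but harmless to fix.
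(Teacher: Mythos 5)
Your core computation---geometric decay of the orbit separations under the expanding condition, followed by summing the H\"older increments into the convergent series $\sum_j C^{-j\sigma}$---is the same mechanism the paper uses, and your handling of (1a)--(1d), (1f) and of the passage from $T$ to $T^n$ is consistent with what the paper leaves implicit. The gap is in the reduction to $N=1$ via the metric $\widehat{d}$, and it is not, as you suggest, ``harmless to fix.'' Your final bound is $\frac{L}{C^\sigma-1}\,\widehat{d}(x,x')^\sigma$, whereas (1g) demands a modulus that is bounded for $d(x,x')<\delta$ and tends to $0$ as $d(x,x')\to 0$ \emph{in the original metric $d$}. Converting requires $\widehat{d}(x,x')\to 0$ as $d(x,x')\to 0$, i.e.\ a uniform modulus of continuity for $T,\dots,T^{N-1}$ on $X$ --- but in this paper $T$ is only defined and continuous on the open dense subset $X_0\subsetneq X$, $T^k(x)$ need not even exist for every $x\in X$, and in the intended applications (meromorphic maps near poles) no such uniform forward modulus exists. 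This forward-versus-backward distinction is precisely the point of the paper's remark after Definition \ref{def3.1} about the Kotus--Urba\'nski inequality: one can control preimage separations via (2c), but one ``cannot compare'' quantities built from forward images. (A secondary issue: absorbing the prefactor $\varrho(a)$ into the clean inequality $\widehat{d}(Ty,Ty')\geq C\,\widehat{d}(y,y')$ also needs justification.)

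The paper's proof is organized specifically to avoid $\widehat{d}$. It splits $S_n\varphi_s(y)-S_n\varphi_s(y')$ into blocks of length $N$, uses (2d) to compare the $j$-th block to the \emph{last} full block with a factor $\varrho^{-\sigma}C^{-(m-j-1)\sigma}$, and bounds that last block by the backward-looking modulus
$$C_N(x,x')=\sup_{T^{N}(y)=x}\sum_{k=0}^{N-1}d(T^k(y),T^k(y'))^\sigma,$$
which tends to $0$ as $d(x,x')\to 0$ by the inverse-branch continuity (2c). All quantities appearing are distances between \emph{preimages} of $x$ and $x'$ under a common branch, so only (2c) and (2d) are ever invoked, never forward continuity of $T$ on $X$. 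To repair your argument you would need to replace the $\widehat{d}$-reduction by this blocking scheme (or restrict to the case $N=1$, where your proof is complete).
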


{\bf Proof.}\ For $q\leq n$, we write $n=mN+p$ and $q=jN+k$ for some
$0\leq p<N$, $0\leq j\leq m$ and $0\leq k<N$. For each $y\in
T^{-n}(x)$ and $y'=T^{-n}_y(x')$ with $d(x,x')<\delta$, we treat two
cases: when $p\geq k$,
$$d(T^q(y),T^q(y'))=d(T^{-(m-j)N+(k-p)}(x),T^{-(m-j)N+(k-p)}(x'))$$
$$\leq \varrho^{-1}C^{-(m-j)}d(T^{k-p}(x),T^{k-p}(x')),$$ where $T^{k-p}$
denotes a branch over $x$ and $x'$; when $p<k$,
$$d(T^q(y),T^q(y'))\leq \varrho^{-1}C^{-(m-j-1)}d(T^{-N+k-p}(x),T^{-N+k-p}(x')),$$ where
$T^{-N+k-p}$ denotes a branch over $x$ and $x'$. Set
$$C_N(x,x')=\sup_{T^{N}(y)=x}\sum_{k=0}^{N-1}d(T^k(y),T^k(y'))^\sigma.$$
Clearly, $C_N(x,x')\rightarrow 0$ as $d(x,x')\rightarrow 0$ with
help of (2c). Thus we have
\begin{eqnarray*}|S_n\varphi_s(y)-S_n\varphi_s(y')|&\leq&
\sum_{j=0}^{m-1}|S_N\varphi_s(T^{jN}(y))-S_N\varphi_s(T^{jN}(y'))|\\
&+& |S_{p-1}\varphi_s(T^{mN}(y))-S_{p-1}\varphi_s(T^{mN}(y'))|\\
&\leq&\sum_{j=0}^{m-1}L\sum_{k=0}^{N-1}d(T^{jN}(T^k(y)),T^{jN}(T^k(y')))^\sigma\\
&+&L\sum_{k=0}^{p-1}d(T^{mN}(T^k(y)),T^{mN}(T^k(y')))^\sigma\\
&\leq&L\varrho^{-\sigma}\sum_{j=0}^{m}C^{-(m-j-1)\sigma}\sum_{k=0}^{N-1}d(T^{k}(y),T^{k}(y'))^\sigma\\
&\leq& L\varrho^{-\sigma}{C^\sigma\over
C^{\sigma}-1}C_N(x,x').\end{eqnarray*} This completes the proof of
Lemma \ref{3.3}.\qed

We discuss the further property of the pressure $P(T,t)$.

\begin{lem}\label{lem3.4}\ Let $T:X_0\rightarrow X$ be a weak Walters expanding conformal map
with the expanding constant $C(a)\geq 1$. Then $P(T,t)$ is convex,
non-increasing and so continuous in $t\in(\tau(T),+\infty)$ with
$\tau(T)=\inf\{t\geq 0: P(T,t)<\infty\}$, and if $C(a)>1$, $P(T,t)$
is strictly decreasing in $t\in(\tau(T),+\infty)$.
\end{lem}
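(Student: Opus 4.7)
My strategy is to prove convexity first (from which continuity on the interior of the domain of finiteness follows automatically), and then deduce monotonicity separately from the expansion condition (2d).

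For convexity, fix $a\in X$, $n\in\mathbb{N}$, and $t_1,t_2>\tau(T)$ with $\theta\in[0,1]$. Since
$$D_dT^n(z)^{-(\theta t_1+(1-\theta)t_2)}=\bigl(D_dT^n(z)^{-t_1}\bigr)^{\theta}\bigl(D_dT^n(z)^{-t_2}\bigr)^{1-\theta},$$
H\"older's inequality with exponents $1/\theta$ and $1/(1-\theta)$ yields
$$\mathcal{L}_{\theta t_1+(1-\theta)t_2}^n(1)(a)\leq \bigl(\mathcal{L}_{t_1}^n(1)(a)\bigr)^{\theta}\bigl(\mathcal{L}_{t_2}^n(1)(a)\bigr)^{1-\theta}.$$
Taking logarithms, dividing by $n$, and letting $n\to\infty$ (the limit being $a$-independent by Lemma~\ref{lem3.1}) gives $P(T,\theta t_1+(1-\theta)t_2)\leq \theta P(T,t_1)+(1-\theta)P(T,t_2)$. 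Continuity of $P(T,\cdot)$ on $(\tau(T),+\infty)$ is then immediate, since any convex real-valued function is continuous on the interior of its domain of finiteness.

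For monotonicity, the key is to extract a pointwise lower bound on the derivative from (2d) and push it through the sum defining $\mathcal{L}_t^{nN}$. Since $T$ is conformal, letting $y\to z$ in the secant inequality of (2d) and invoking the definition (\ref{1.1}) of $D_dT$ produces the pointwise estimate $D_dT^{nN}(z)\geq \varrho(a)C(a)^n$ for every preimage $z\in T^{-nN}(a)$. For $t_2>t_1>\tau(T)$ this gives
$$\mathcal{L}_{t_2}^{nN}(1)(a)=\sum_{T^{nN}(z)=a}D_dT^{nN}(z)^{-(t_2-t_1)}\,D_dT^{nN}(z)^{-t_1}\leq \bigl(\varrho(a)C(a)^n\bigr)^{-(t_2-t_1)}\mathcal{L}_{t_1}^{nN}(1)(a).$$
Taking $\frac{1}{nN}\log$ of both sides and sending $n\to\infty$, the $\log\varrho(a)$ contribution vanishes after division by $nN$, while the $\log C(a)$ contribution survives, yielding
$$P(T,t_2)\leq P(T,t_1)-\frac{(t_2-t_1)\log C(a)}{N}.$$
When $C(a)\geq 1$ this gives $P(T,t_2)\leq P(T,t_1)$, i.e.\ non-increasing; when $C(a)>1$ the inequality is strict, giving strictly decreasing.

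The main obstacle is the passage from (2d), which is a secant-type lower bound on a component, to the pointwise derivative bound $D_dT^{nN}(z)\geq \varrho(a)C(a)^n$ used above; this is justified by the conformality assumption together with the definition (\ref{1.1}) of $D_dT$. A minor annoyance is that $\varrho(a)$ may be less than $1$, so $\varrho(a)^{-(t_2-t_1)}$ can be large; however, since it is independent of $n$, the term $-(t_2-t_1)\log\varrho(a)/(nN)$ tends to $0$ as $n\to\infty$ and drops out of the final inequality. Finally, if one prefers to avoid assuming a priori existence of $P(T,t)$, the same Hölder and expansion estimates can be applied to $\overline{P}(T,t)$ and $\underline{P}(T,t)$ separately, since Lemma~\ref{lem3.1} already gives independence from the base point $a$.
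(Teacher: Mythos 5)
Your proof is correct and follows essentially the same route as the paper: convexity via H\"older (which the paper simply declares obvious), and monotonicity from the pointwise bound $D_dT^{nN}(y)\geq\varrho(a)C(a)^{n}$ extracted from (2d), arriving at the same inequality $P(T,t_2)\leq P(T,t_1)-(t_2-t_1)\log C(a)/N$. The only difference is technical rather than conceptual: where you factor $(\varrho(a)C(a)^{n})^{-(t_2-t_1)}$ directly out of the sum, the paper differentiates truncated partial sums $S_m(t)$ in $t$ and integrates between $t_1$ and $t_2$; your version is cleaner and avoids the truncation step.
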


{\bf Proof.}\ The convexity of $P(T,t)$ in $t$ is obvious. For a
fixed $a\in X$, from (2) in Lemma \ref{lem3.1}, we only need to
prove that $P(T^N,t)$ with $N=N(a)$ is non-increasing and further
strictly decreasing in $t$ if $C>1$. We write
$$\mathcal{L}^n_{t,T^N}(a)=\sum_{T^{nN}(y)=a}D_dT^{nN}(y)^{-t}.$$
By $S_m(t)$ we denote the sum of $m$ items of the above series.
Clearly, the condition (2d) yields that $D_dT^{nN}(y)\geq \varrho
C^n(a),\ y\in T^{-nN}(a)$. Then
\begin{eqnarray*}{\partial S_m(t)\over\partial t}&=&\sum^*{1\over
D_dT^{nN}(y)^t}\log {1\over D_dT^{nN}(y)}\\
&\leq&-(n(\log C)+\log\varrho)S_m(t),\end{eqnarray*} where
$\sum\limits^*$ is the sum of the items in $S_m(t)$. For a pair
$t_1$ and $t_2$ with $\tau(T)<t_2<t_1$, we have $$\frac{1}{n}\log
S_m(t_1)-\frac{1}{n}\log S_m(t_2)\leq -(\log
C+\frac{1}{n}\log\varrho)(t_1-t_2).$$ For all sufficiently large
$m$, we have $S_m(t_1)\leq \mathcal{L}^n_{t_1,T^N}(a)\leq 2S_m(t_1)$
and thus
$$\frac{1}{n}\log
\mathcal{L}^n_{t_1,T^N}(a)-\frac{1}{n}\log
\mathcal{L}^n_{t_2,T^N}(a)\leq \frac{1}{n}(\log 2-\log\varrho)-(\log
C)(t_1-t_2)$$ so that
$$P(T^N,t_1)-P(T^N,t_2)\leq -(\log C)(t_1-t_2).$$
The proof of Lemma \ref{lem3.4} is completed.
 \qed

Define the number
$$s(T)=\inf\{t\geq 0: P(T, t)\leq 0\}$$
as the Poincar\'e exponent for $T$ (if for all $t$, $P(T,t)>0$, then
define $s(T)=\infty$). We do not know if $\tau(T)<\infty$ and
$s(T)<\infty$ for a weak Walters expanding map. The following result
gives a condition under which $s(T)<+\infty$ and discusses the
relation between $s(T)$ and the Hausdorff dimension of some subset
of $X_0$. Set
$$X_\infty=\bigcap_{n=0}^\infty T^{-n}(X).$$
It is obvious that $X_\infty$ is completely invariant, that is,
$T(x)\in X_\infty$ if and only if $x\in X_\infty$. Define $X_r$ as
the set of points $x$ in $X_\infty$ such that $\{T^n(x)\}$ has a
limit point in $X$ and $X_r$ is called the radial set on $X$ for
$T$. When $(X,d)$ is compact, we have $X_r=X_\infty$.

\begin{thm}\label{thm3.1}\ \ Let $T:X_0\rightarrow X$ be a weak Walters expanding conformal map
with the expanding constant $C(a)>1$ at each point $a\in X$. Then
${\rm dim}_H(X_r)\leq s(T).$ In addition, assume that there exist a
point $x\in X$ and a $R>0$ such that for arbitrary two points $a$
and $b$ in $X\setminus B(x,R)$ and each $n$, we have a single valued
branch $g$ of $T^{-n}$ which has bounded distortion over $a$ and $b$
and there exists a positive function $\phi(r)$ in $(0,\infty)$ such
that for all sufficiently large $\hat{R}$, we have
$\phi(\hat{R})\geq\hat{R}$, and if $g(a)\in B(x,\hat{R})$ then
$g(b)\in B(x,\phi(\hat{R}))$. Then
$${\rm dim}_H(X_r)=s(T).$$
\end{thm}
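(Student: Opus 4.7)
The plan is to prove the upper and lower bounds separately. For the upper bound $\dim_H(X_r)\leq s(T)$, I would fix any $t>s(T)$ and show $\mathcal{H}^t(X_r)=0$. By Lemma \ref{lem3.4}, since $C(a)>1$ everywhere, $P(T,t)$ is strictly decreasing and continuous on $(\tau(T),\infty)$, so $P(T,t)<P(T,s(T))\leq 0$, and hence the Poincar\'e series $\sum_n \mathcal{L}_t^n(1)(y)$ converges geometrically at every base point $y\in X$, with its tail beyond index $N$ tending to $0$ as $N\to\infty$. For each $x\in X_r$, by definition there exist $y\in X$ and $n_k\to\infty$ with $T^{n_k}(x)\to y$, hence eventually $T^{n_k}(x)\in B(y,\delta_y/2)$ and $x\in T^{-n_k}_x(B(y,\delta_y))$; the conformal property (Definition \ref{def3.2}) together with bounded distortion yields
$$\mathrm{diam}\bigl(T^{-n_k}_x(B(y,\delta_y))\bigr)\leq M(y)\,\delta_y\,(D_dT^{n_k}(x))^{-1},$$
which tends to $0$ by the expanding condition (2d).

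Exhausting the locally compact space $X$ by an increasing sequence of compact sets $K_m$ and covering each $K_m$ by finitely many balls $B(y_i,\delta_{y_i}/2)$, the subset of $X_r$ whose orbit clusters in $K_m$ is covered, for any fixed $N$, by the injective components $A_j^{(n)}(y_i)$ with $n\geq N$. Since each such component contains exactly one preimage $z_j=T^{-n}_j(y_i)$, bounded distortion gives
$$\sum_{n\geq N,\,i,\,j}\bigl(\mathrm{diam}\,A_j^{(n)}(y_i)\bigr)^{t}\leq C\sum_{n\geq N,\,i}\mathcal{L}_t^n(1)(y_i)\longrightarrow 0\qquad(N\to\infty),$$
while the diameters themselves shrink uniformly as $N$ grows. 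Hence $\mathcal{H}^t$ of the cluster-in-$K_m$ part of $X_r$ is zero, and a countable union over $m$ gives $\mathcal{H}^t(X_r)=0$, so $\dim_H X_r\leq t$. Letting $t\downarrow s(T)$ completes the upper bound.

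For the lower bound $\dim_H(X_r)\geq s(T)$, the extra hypothesis involving the comparison function $\phi$ is precisely what supplies uniform distortion control outside the fixed ball $B(x,R)$, compensating for the lack of compactness. The plan is to construct an $s(T)$-conformal measure $\mu$ on $X$ in the spirit of Theorem \ref{thm3.1+}, via a Patterson--Sullivan style weak-$*$ limit of normalized sums
$$\nu_t=Z_t^{-1}\sum_n e^{-nP(T,t)}\sum_{T^n(y)=a} D_dT^n(y)^{-t}\delta_y\qquad\text{as }t\downarrow s(T),$$
where $Z_t$ is the appropriate normalizer. The $\phi$-comparability transfers Poincar\'e-type sums between any two base points $a,b\in X\setminus B(x,R)$ with constants independent of $a,b$ and of $n$, producing the uniform estimates that play the role of compactness in the Ruelle--Perron--Frobenius argument and that ensure the normalizers $Z_t$ do not degenerate. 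Once $\mu$ is in hand, for $x\in X_r$ with $T^{n_k}(x)\to y$, conformality and bounded distortion give
$$\mu\bigl(T^{-n_k}_x(B(y,\delta_y))\bigr)\asymp (D_dT^{n_k}(x))^{-s(T)}\asymp \bigl(\mathrm{diam}\,T^{-n_k}_x(B(y,\delta_y))\bigr)^{s(T)},$$
and the mass distribution principle applied to the shrinking nested families of components around $x$ then delivers $\dim_H X_r\geq s(T)$.

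The principal obstacle is the construction of the $s(T)$-conformal measure when $X$ is not compact: Theorem \ref{thm3.1+} is stated in the compact case, so one must verify that the $\phi$-hypothesis, combined with the conformal and weak Walters expanding conditions, suffices to extract a genuinely nontrivial weak-$*$ limit, that it is $s(T)$-conformal with $P(T,s(T))=0$, and that the ensuing ball-measure estimate along preimages is uniform in $n$ and $x\in X_r$. This transfer step — turning a local comparability between distant base points into global uniform control of the Poincar\'e series and the conformal measure — is where the argument is delicate and where the precise form of $\phi$ plays its role.
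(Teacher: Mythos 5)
Your upper bound is fine and is essentially what the paper does (it simply cites the covering argument of Kotus--Urbanski and of Zheng for that half). The problem is the lower bound. You propose to build an $s(T)$-conformal measure by a Patterson--Sullivan limit and then apply the mass distribution principle, but you yourself identify the construction of that measure in the non-compact setting as ``the principal obstacle'' and do not resolve it; the paper never constructs such a measure under the hypotheses of Theorem \ref{thm3.1} (its conformal-measure results, Theorem \ref{thm3.1+} and Theorem \ref{thm3.2}, are confined to the compact case). Moreover, even granting the measure, the mass distribution principle needs $\mu(B(z,r))\lesssim r^{s}$ for \emph{all} small balls, not just for the pull-back components $T^{-n_k}_x(B(y,\delta_y))$ along a subsequence; in the countable-to-one, non-compact setting, controlling how many components of comparable diameter meet a given ball and how much mass each carries is exactly the point where Kotus--Urbanski need their ``strong regularity'' hypothesis, which this theorem is designed to avoid (see the remark after Corollary \ref{cor3.2}). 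So as written your lower bound is a plan whose hardest step is left open, not a proof.

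The paper's route is genuinely different and sidesteps both difficulties. Fixing $t<s(T)$, so that $P(T,t)>0$ and $\mathcal{L}^n_t(a)\to\infty$ along a subsequence, it uses the $\phi$-hypothesis only to \emph{localize} the divergence: it shows that for any $A>0$ there are $m$ and a point $a$ with $\sum\{D_dT^m(y)^{-t}:\ y\in B(a,\delta_a/2),\ T^m(y)=a\}>A$ (inequality (\ref{3.3})), by transferring Poincar\'e-type sums between finitely many base points via $\phi$ and then composing branches to amplify the sum. The resulting inverse branches $T^{-m}_y:B(a,\delta_a)\hookrightarrow B(a,\delta_a)$ form a conformal iterated function system whose limit set lies in $X_r$, and the Moran-type criterion $\sum_y\alpha(y)^{t}>1$ (which follows from (\ref{3.3}) and bounded distortion) gives Hausdorff dimension at least $t$ directly --- no conformal measure on all of $X$ and no global ball-measure estimate are needed. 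If you want to salvage your approach you must actually carry out the non-compact conformal-measure construction and the covering estimates for arbitrary balls; the IFS argument is the shorter and safer path.
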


{\bf Proof.}\ The proof of the first part of Theorem \ref{thm3.1} is
the same as that of Theorem 2.7 of \cite{KotusUrbanski} and Lemma
3.6 in \cite{Zheng}, because we may assume that $s(T)<+\infty$.

The main idea to prove the second part of Theorem \ref{thm3.1} comes
from Stallard \cite{Stallard} and Zheng \cite{Zheng}. Noting that
$s(T^N)=s(T)$, we can assume that $N=1$ in (2d). Take arbitrarily
$t<s(T)$ and so $P(T,t)>0$ so that for a sequence of positive
integers, $\mathcal{L}^n_t(a)\rightarrow \infty$ as
$n\rightarrow\infty$.

Now we want to prove for an arbitrarily large $A>0$, there exist a
sequence of $m\in \mathbb{N}$ such that
\begin{equation}\label{3.3}\sum_{y\in B(a,\delta_a/2),
T^m(y)=a}D_dT^m(y)^{-t}>A\end{equation} for $a\in X$. Take in $X$
points $x_i (1\leq i\leq p)$ such that
$B_X(x,2R)\subset\cup_{i=1}^pB_X(x_i,\delta_i),$
$\delta_i=\delta_{x_i}$ and $x_p\in X\setminus B(x,R)$. Then there
exist a $n$ and a $R_n>\delta_i (1\leq i\leq p)$ such that for each
$i$, \begin{equation}\label{3.4}\sum_{y\in B(x,R_n),
T^n(y)=x_i}D_dT^n(y)^{-t}>AM^{-t}(x_i).\end{equation} For $c\in
B(x_i,\delta_i) (1\leq i\leq p)$ and for each $y_i\in
T^{-n}(x_i)\cap B_X(x,R_n)$, in terms of (2d) we have
$w=T^{-n}_{y_i}(c)$ such that
$$d(w,y_i)\leq \varrho^{-1}C^{-n}d(c,x_i)<\varrho^{-1}C^{-n}\delta_i<R_n.$$
This together with (\ref{3.4}) implies that
\begin{equation}\label{3.5}\sum_{y\in
B(x,\tilde{R}_n), T^n(y)=c}D_dT^n(y)^{-t}\geq\sum_{y\in B(x,2R_n),
T^n(y)=c}D_dT^n(y)^{-t}>A\end{equation} where
$\tilde{R}_n=\phi(2R_n).$ For $c\in B_X(x,\tilde{R}_n)\setminus
B(x,R)$ and for each $y_p\in T^{-n}(x_p)\cap B_X(x,2R_n)$, in terms
of our assumption of Theorem \ref{thm3.1} we have
$w_p=T^{-n}_{y_p}(c)$ and $w_p\in B(x,\tilde{R}_n)$ so that
(\ref{3.5}) holds for such $c$. By induction, for each $s\geq 1$ we
have
\begin{equation}\label{3.6}\sum_{y\in
B(x,\tilde{R}_n), T^{ns}(y)=a}D_dT^{ns}(y)^{-t}>A^s,\ \forall\ a\in
B_X(x,\tilde{R}_n).\end{equation} Take $a_j (1\leq j\leq q)$ in
$B(x,\tilde{R}_n)$ such that $B(x,\tilde{R}_n)\subset \cup_{j=1}^q
B(a_j,\varrho_j/2), \varrho_j=\delta_{a_j}/2$. Take a $s$ such that
$A^s>qAM^t(a_j)\ (1\leq j\leq q)$. We want to prove that for some
$a_j$, (\ref{3.3}) holds. For the sake of simplicity, assume that
$q=2$ and from (\ref{3.6}) assume that
$$\sum_{y\in
B(a_2,\varrho_2/2), T^{ns}(y)=a_1}D_dT^{ns}(y)^{-t}>{A^s\over q},$$
and
$$\sum_{y\in
B(a_1,\varrho_1/2), T^{ns}(y)=a_2}D_dT^{ns}(y)^{-t}>{A^s\over q}.$$
Then \begin{eqnarray*}&\ &\sum_{y\in B(a_1,\varrho_1),
T^{2ns}(y)=a_1}D_dT^{2ns}(y)^{-t}\\
&\geq & \sum_{w\in B(a_2,\varrho_2/2),
T^{ns}(w)=a_1}D_dT^{ns}(w)^{-t} \sum_{y\in
B(a_1,\varrho_1), T^{ns}(y)=w}D_dT^{ns}(y)^{-t}\\
&\geq &\sum_{w\in B(a_2,\varrho_2/2),
T^{ns}(w)=a_1}D_dT^{ns}(w)^{-t} \sum_{y\in
B(a_1,\varrho_1/2), T^{ns}(y)=a_2}D_dT^{ns}(y)^{-t}M^{-t}(a_2)\\
&\geq &\left({A^s\over q}\right)^2M^{-t}(a_2)>A.\end{eqnarray*} Thus
we have proved (\ref{3.3}).

For each $y\in B(a,\delta_a/2)$ with $T^m(y)=a$, we have
$${\rm diam}(T^{-m}_y(B(a,\delta_a)))\leq 2\varrho^{-1}
C^{-m}\delta_a<\delta_a/2$$ so that $T^{-m}_y(B(a,\delta_a))\subset
B(a,\delta_a)$. Set
$$\alpha(y)=\inf\left\{{d(T^{-m}_y(b), T^{-m}_y(c))\over d(b,c)}:\ b,
c\in B(a,\delta_a)\right\}.$$ It is clear from conformal and
expanding properties of $T$ that
$$M^{-1}(a)D_dT^m(y)^{-1}\leq \alpha(y)\leq \varrho^{-1}C^{-m}(a)<1$$ and so
$$\sum_{y\in B(a,\delta_a),
T^{m}(y)=a}\alpha(y)^{-t}\geq \sum_{y\in B(a,\delta_a),
T^{m}(y)=a}D_dT^{m}(y)^{-t}M^{-t}(a)>1.$$ This yields that the
invariant set for the system
$\{T^{-m}_y:B(a,\delta_a)\hookrightarrow B(a,\delta_a)|\ y\in
B(a,\delta_a/2)\cap T^{-m}(a)\}$ has the Hausdorff dimension at
least $t$ and is contained in $X_r$. Furthermore, ${\rm
dim}_H(X_r)\geq t$ and so ${\rm dim}_H(X_r)\geq s(T)$.

The proof of Theorem \ref{thm3.1} is completed.\qed

There exists a direct consequence of Theorem \ref{thm3.1} that
$s(T)$ is finite under the assumption of Theorem \ref{thm3.1}.

\begin{cor}\label{cor3.1}\ Let $T:X_0\rightarrow X$ be a weak Walters expanding conformal
map and satisfy all the assumptions of the second part of Theorem
\ref{thm3.1}. If ${\rm dim}_H(X)<\infty$, then

(1) $s(T)\leq {\rm dim}_H(X)<\infty$;

(2) as $n\rightarrow\infty$, $\mathcal{L}_t^n(a)\rightarrow \infty$
for $t<s(T)$ or $0$ for $t>s(T)$.\end{cor}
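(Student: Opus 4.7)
The plan is to derive (1) as an immediate corollary of Theorem \ref{thm3.1} and the monotonicity of Hausdorff dimension, and to extract (2) from the defining formula for $s(T)$ together with the strict monotonicity of $P(T,t)$ established in Lemma \ref{lem3.4}. For (1), since the radial set satisfies $X_r\subseteq X_\infty\subseteq X$, monotonicity of Hausdorff dimension yields ${\rm dim}_H(X_r)\leq {\rm dim}_H(X)$. All hypotheses of the second part of Theorem \ref{thm3.1} are in force, so that theorem gives ${\rm dim}_H(X_r)=s(T)$, whence $s(T)\leq {\rm dim}_H(X)<\infty$.

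For (2), the hypothesis $C(a)>1$ (inherited from the second part of Theorem \ref{thm3.1}) lets me apply Lemma \ref{lem3.4} to conclude that $P(T,t)$ is strictly decreasing and continuous on $(\tau(T),\infty)$. Part (1) supplies $s(T)<\infty$, so strict monotonicity combined with the defining formula $s(T)=\inf\{t\geq 0:P(T,t)\leq 0\}$ forces $P(T,t)>0$ for $t<s(T)$ and $P(T,t)<0$ for $t>s(T)$. By the definition of the pressure this means $\frac{1}{n}\log\mathcal{L}_t^n(a)$ tends to a positive number in the former case and to a negative number in the latter, yielding $\mathcal{L}_t^n(a)\to\infty$ for $t<s(T)$ and $\mathcal{L}_t^n(a)\to 0$ for $t>s(T)$.

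The main obstacle is the status of the limit $P(T,t)=\lim_n\frac{1}{n}\log\mathcal{L}_t^n(a)$ in the merely locally compact setting, since Lemma \ref{lem3.1}(3) only guarantees coincidence of $\overline{P}$ and $\underline{P}$ when $X$ itself is compact. To bypass this I would apply the comparison inequality from the proof of Lemma \ref{lem3.4} separately to $\overline{P}$ and $\underline{P}$, obtaining $\overline{P}(T,t)<0$ for $t>s(T)$ (which is enough to force exponential decay $\mathcal{L}_t^n(a)\to 0$) and $\underline{P}(T,t)>0$ for $t<s(T)$ (which is enough to force $\mathcal{L}_t^n(a)\to\infty$). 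The required $a$-independence of these upper and lower pressures is already supplied by Lemma \ref{lem3.1}(1).
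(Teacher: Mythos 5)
Your part (1) is exactly the intended argument and is complete: $X_r\subseteq X$, so monotonicity of Hausdorff dimension together with ${\rm dim}_H(X_r)=s(T)$ from the second part of Theorem \ref{thm3.1} gives $s(T)\leq{\rm dim}_H(X)<\infty$. The paper states the corollary without proof as a ``direct consequence,'' and your route for (2) --- strict decrease of the pressure from Lemma \ref{lem3.4} (applicable since $C(a)>1$ is among the hypotheses of the second part of Theorem \ref{thm3.1}) combined with the definition of $s(T)$ --- is the natural one and is what the paper intends.

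However, the patch you propose for the non-compact setting does not fully close the issue you correctly identified. For $t>s(T)$ the argument works: by definition of the infimum there is $t'\in[s(T),t)$ with $\overline{P}(T,t')\leq 0$, and taking $\limsup$ in the inequality from the proof of Lemma \ref{lem3.4} gives $\overline{P}(T,t)\leq\overline{P}(T,t')-(\log C)(t-t')/N<0$, hence $\mathcal{L}_t^n(a)\to 0$. But for $t<s(T)$ the definition $s(T)=\inf\{t\geq 0:P(T,t)\leq 0\}$ only tells you that $P(T,t)\leq 0$ fails, i.e., at best that $\overline{P}(T,t)>0$; it supplies no lower bound on $\underline{P}(T,t)$, and the monotonicity inequality for $\underline{P}$ cannot manufacture one, since it only propagates positivity of $\underline{P}$ downward from a point where $\underline{P}\geq 0$ is already known --- and no such point is available. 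Thus $\overline{P}(T,t)>0$ yields $\mathcal{L}_t^n(a)\to\infty$ only along a subsequence, not the full-sequence divergence claimed in (2). Closing this requires the existence of the limit $P(T,t)=\lim_n\frac1n\log\mathcal{L}_t^n(a)$ for $t$ near $s(T)$, which Lemma \ref{lem3.1}(3) provides only when $X$ is compact; in the merely locally compact setting of the second part of Theorem \ref{thm3.1} it must either be assumed or derived by an additional subadditivity argument in the spirit of Theorem \ref{6add}. The paper itself is silent on this point, so the gap is inherited rather than introduced, but your proof as written does not resolve it.
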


Next we consider the case of the Walters expanding conformal map.
The following is a direct consequence of Theorem \ref{thm3.1}.

\begin{cor}\label{cor3.2}\ Let $T:X_0\rightarrow X$ be a Walters expanding conformal
map with expanding constant $C>1$. Then
\begin{equation}\label{3.7}s(T)={\rm dim}_H(X_r)={\rm
dim}_H(X_\infty).\end{equation}\end{cor}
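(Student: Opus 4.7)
The plan is to deduce the corollary directly from Theorem \ref{thm3.1} after verifying that its (second-part) hypotheses are automatic in the compact case.

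First, I would observe that because $T$ is Walters expanding, by Definition \ref{def3.1} the space $(X,d)$ is compact, and because the expanding constant $C=\inf\{C(a):a\in X\}$ is strictly greater than $1$, we have $C(a)\geq C>1$ for every $a\in X$. Thus all the pointwise hypotheses of Theorem \ref{thm3.1} are satisfied. The equality $X_r=X_\infty$ in (\ref{3.7}) is then immediate from the remark preceding the theorem: in a compact metric space every orbit has a limit point in $X$, so $X_r=X_\infty$.

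Next I would apply the first half of Theorem \ref{thm3.1} to conclude the easy inequality
$$\dim_H(X_r)\leq s(T).$$
For the reverse inequality $s(T)\leq \dim_H(X_r)$, I would observe that the extra hypothesis in the second half of Theorem \ref{thm3.1} -- the existence of a point $x\in X$ and a radius $R>0$ such that every pair of points in $X\setminus B(x,R)$ admits, at every level $n$, a single-valued branch of $T^{-n}$ with bounded distortion and a control function $\phi$ -- is \emph{vacuously satisfied} in the compact setting. Indeed, since $\operatorname{diam}(X)<\infty$, any $R>\operatorname{diam}(X)$ makes $X\setminus B(x,R)=\emptyset$, so there are no pairs $(a,b)$ to check and no function $\phi$ is needed. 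Theorem \ref{thm3.1} then yields $\dim_H(X_r)\geq s(T)$, completing the equality (\ref{3.7}).

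There is really no obstacle here: the corollary is essentially a specialization of Theorem \ref{thm3.1} in which compactness simultaneously (i) collapses $X_r$ onto $X_\infty$ and (ii) trivializes the auxiliary distortion/control hypothesis used to get the lower bound on $\dim_H(X_r)$. The only subtlety worth flagging in the write-up is that the $N$ appearing in condition (2d) may be nontrivial, but, as observed in the paper just after Definition \ref{def3.1}, passing to the equivalent metric $\widehat{d}$ (or equivalently using Lemma \ref{lem3.1}(2) together with the identity $s(T^N)=s(T)$) reduces everything to the case $N=1$, exactly as in the proof of the second part of Theorem \ref{thm3.1}.
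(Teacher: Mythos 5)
Your argument coincides with the paper's: in the compact case one takes $R$ larger than $\operatorname{diam}(X)$ so that $X\setminus B(x,R)=\emptyset$, which makes the extra distortion/control hypothesis of the second part of Theorem \ref{thm3.1} vacuous, and compactness gives $X_r=X_\infty$; this is exactly the paper's one-line proof. Your side remark about reducing $N$ to $1$ via the metric $\widehat{d}$ is consistent with what the paper already does inside the proof of Theorem \ref{thm3.1}, so nothing further is needed.
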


{\bf Proof.} Since $(T,d)$ is compact, for a point $x\in X$ we have
a $R>0$ such that $B(x,R)=X$ and thus the assumption in the second
part of Theorem \ref{thm3.1} is satisfied by the Walters expanding
conformal map. This implies the formula (\ref{3.7}).\qed

The result is an improvement of Theorem 2.7 in \cite{KotusUrbanski}
which confirms Corollary \ref{cor3.2} under the additional
assumption of that $T$ is strongly regular.  We consider the
existence of the conformal measure and establish the following

\begin{thm}\label{thm3.2}\ \ Let $T:X_0\rightarrow X$ be a Walters expanding conformal map
with expanding constant $C\geq 1$. If $s(T)<\infty$, then
$P(T,s(T))=0$ and furthermore, if
$$C^{(1)}(x,x')=\sup_{T(y)=x}\left|1-{D_dT(y)\over D_dT(y')}\right|\rightarrow 0,\ {\rm as}\
d(x,x')\rightarrow 0,$$ then $T$ has a $D_dT(x)^s$-conformal measure
on $X$.
\end{thm}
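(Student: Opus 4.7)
My plan is to reduce the whole statement to a direct application of Theorem \ref{thm3.1+} at the critical parameter $t = s(T)$. Concretely, if I can show (a) $P(T, s(T)) = 0$ and (b) the pair $(T, \varphi_{s(T)})$ with $\varphi_{s(T)} = -s(T)\log D_dT$ is admissible, then Theorem \ref{thm3.1+} delivers a $\mu \in \mathcal{M}(X)$ with $\mathcal{L}_{s(T)}^*(\mu) = \lambda\mu$ and $\log\lambda = P(T, s(T)) = 0$; hence $\lambda = 1$, which is precisely the statement that $D_dT^{s(T)}$ is the Jacobian of $T$ with respect to $\mu$, i.e.\ $\mu$ is the desired conformal measure.

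For (a), I would invoke Lemma \ref{lem3.4}: the pressure $P(T, t)$ is convex, non-increasing and continuous on $(\tau(T), \infty)$, where $\tau(T) = \inf\{t \geq 0 : P(T, t) < \infty\}$. Assuming $s(T) < \infty$, the set $\{t \geq 0 : P(T, t) \leq 0\}$ is nonempty, so $s(T) \in [\tau(T), \infty)$; in fact $s(T) > \tau(T)$, since at $\tau(T)$ the pressure is $+\infty$ from the left. Continuity at $s(T)$ combined with the infimum definition then yields both $P(T, s(T)) \leq 0$ (from $s(T)$ being a limit of points where $P \leq 0$) and $P(T, s(T)) \geq 0$ (since $P(T, t) > 0$ for every $t < s(T)$ by the infimum). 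Therefore $P(T, s(T)) = 0$.

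For (b), conditions (1a), (1b), (1c) follow from the Walters expanding properties (2a), (2b), (2c) together with compactness of $(X, d)$, which lets me fix a single $\delta > 0$. The hypothesis $C^{(1)}(x, x') \to 0$ combined with Lemma \ref{lem3.2}(1) gives (1d), provided $\varphi_{s(T)}$ is summable. The summability requirement, $\sup_{x \in X} \mathcal{L}_{s(T)}(1)(x) < \infty$, is the delicate point. Pointwise finiteness at each $x$ is already in hand, because $P(T, s(T)) = 0 < \infty$ forces $\mathcal{L}_{s(T)}(1)(x) < \infty$ for every $x$ (else $\overline{P_x}(T, s(T)) = +\infty$, contradicting Lemma \ref{lem3.1}(1)). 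To upgrade to a uniform bound I would use that $\mathcal{L}_{s(T)}(1)$, as a countable sum of non-negative continuous branch contributions $D_dT(T^{-1}_j(x))^{-s(T)}$, is lower semi-continuous, and that the $C^{(1)}$ modulus controls the ratio $\mathcal{L}_{s(T)}(1)(x)/\mathcal{L}_{s(T)}(1)(x')$ whenever $d(x, x')$ is small; a finite cover of the compact $X$ by such balls would then produce a uniform bound.

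The main obstacle is this last uniform summability step at the critical parameter $t = s(T)$. My fallback, if a direct uniform estimate resists, is to apply Theorem \ref{thm3.1+} at parameters $t > s(T)$ (where summability is easier, being monotone in $t$ once $D_dT \geq 1$) to obtain $t$-conformal measures $\mu_t$, and then extract a weak-$*$ subsequential limit $\mu_{t_n} \to \mu$ as $t_n \downarrow s(T)$. Since $\lambda_{t_n} = e^{P(T, t_n)} \to e^{P(T, s(T))} = 1$ by continuity of the pressure, the eigenrelation $\mathcal{L}_{t_n}^*(\mu_{t_n}) = \lambda_{t_n}\mu_{t_n}$ would pass to $\mathcal{L}_{s(T)}^*(\mu) = \mu$ in the limit, yielding the $s(T)$-conformal measure without needing admissibility at $s(T)$ itself.
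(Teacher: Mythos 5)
Your reduction to Theorem \ref{thm3.1+} is the right frame and you correctly isolate the genuine difficulty, but both halves of your argument fail at exactly the critical parameter. For (a), Lemma \ref{lem3.4} gives continuity of $P(T,\cdot)$ only on the \emph{open} interval $(\tau(T),+\infty)$, so your squeeze needs $\tau(T)<s(T)$; the justification ``at $\tau(T)$ the pressure is $+\infty$ from the left'' does not establish this. Nothing in the hypotheses excludes $\tau(T)=s(T)$, i.e.\ $P(T,t)=+\infty$ for every $t<s(T)$ while $P(T,t)\le 0$ for $t>s(T)$; in that case Lemma \ref{lem3.4} says nothing about $P(T,s(T))$ and your two-sided argument collapses. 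The paper's proof is built precisely to avoid any appeal to left-continuity at $s(T)$: it works only with $t>s(T)$ and transfers information to $t=s(T)$ by a monotone limit.

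The missing mechanism is the ``basic inequality'' \eqref{3.8}, $\mathcal{L}^n_t(a)\ge D_dT^{n-1}(w)^{-t}\,\mathcal{L}_t(w)$ for every $w\in T^{-n+1}(a)$. In the essential case $P(T,t)<0$ for all $t>s(T)$, one has $\mathcal{L}^n_t(a)<1$ for large $n$; combining \eqref{3.8} with the density of the preimage sets and the bounded distortion constants $M(x_i)$ yields a bound on $\mathcal{L}_t(w)$, uniform in $w\in X$, whose constant remains bounded as $t\downarrow s(T)$, and monotone convergence then gives summability of $\varphi_{s(T)}$, from which $P(T,s(T))=0$ follows. Neither of your routes supplies this input. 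Your pointwise-finiteness step is itself flawed: $\overline{P_x}(T,s)<\infty$ controls $\frac1n\log\mathcal{L}^n_s(1)(x)$ only for large $n$ and does not force the one-step sum $\mathcal{L}_s(1)(x)$ to be finite (the decomposition $\mathcal{L}^n_s(1)(x)=\sum_{T(w)=x}e^{\varphi_s(w)}\mathcal{L}^{n-1}_s(1)(w)$ has no uniform positive lower bound on the inner factors). Your weak-$*$ fallback needs admissibility, hence uniform summability, at each $t_n>s(T)$ --- which is not ``easy'' here, since $D_dT\ge 1$ is not assumed and summability for $t>s(T)$ is itself obtained in the paper only via \eqref{3.8} --- and it also needs $\mathcal{L}_{t_n}(f)\to\mathcal{L}_{s}(f)$ uniformly to pass the eigenrelation to the limit; that is again the same uniform control near $t=s(T)$ that you have not established.
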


{\bf Proof.}\ \  In terms of Lemma \ref{lem3.2} and Theorem
\ref{thm3.1+}, it suffices to prove that $P(T,s(T))=0$. We have
known that $P(T,t)$ is non-increasing in $t$. If for some $t$,
$P(T,t)=0$, then $P(T,s(T))=0$. Therefore we assume that for
arbitrary $t>s(T)$, $P(T,t)<0$ and so $\mathcal{L}_t^n(a)\rightarrow
0$ as $n\rightarrow\infty.$ The following inequality is basic in our
proof:
\begin{eqnarray}\label{3.8}
\mathcal{L}^n_t(a)&=&\sum_{T^n(y)=a}D_dT^n(y)^{-t}\nonumber\\
&=&\sum_{T^{n-1}(w)=a}D_dT^{n-1}(w)^{-t}\sum_{T(y)=w}D_dT(y)^{-t}\nonumber\\
&\geq&D_dT^{n-1}(w)^{-t}\sum_{T(y)=w}D_dT(y)^{-t}\nonumber\\
&=&D_dT^{n-1}(w)^{-t}\mathcal{L}_t(w),\ w\in
T^{-n+1}(a).\end{eqnarray} Take $x_j (1\leq j\leq q)$ such that
$X=\cup_{j=1}^qB(x_j,\delta/2)$ and a $m$ such that for $n>m$,
\begin{equation}\label{3.8+}\mathcal{L}_t^n(a)<1.\end{equation}
Take a $S$ such that for each $j\in\{1,2,...,q\}$ and each $b\in X$,
$T^{-S+1}(b)\cap B(x_j,\delta/2)\not=\emptyset$. From (\ref{3.8+}),
we have $\mathcal{L}_t^S(b)<1$ for some $b\in X$, and hence $b\in
B(x_i,\delta/2)$ for some $i$. Thus, $$\mathcal{L}^S_t(x_i)\leq
M(x_i)^t\mathcal{L}^S_t(b)<M(x_i)^t.$$ From each $T^{-S+1}(x_i)\cap
B(x_j,\delta/2)$ for each $j$, we take a point $w_j^i$ and set
$K(t)=\max\{D_dT^{S-1}(w_j^i)^{t}:1\leq i, j\leq q\}$.

In terms of (\ref{3.8}) with $S$ in the place of $n$, we have
$$\mathcal{L}_t(w_j^i)\leq
D_dT^{S-1}(w_j^i)^{t}\mathcal{L}_t^S(x_i)<K(t)M(x_i)^t.$$ For each
$w\in X$, $w\in B(x_j,\delta/2)$ and so $w\in B(w_j^i,\delta)$ for
some $j$ and then
$$\mathcal{L}_t(w)\leq M(w_j^i)^t\mathcal{L}_t(w_j)<M^{t}(w_j^i)M(x_i)^tK(t).$$
Letting $t\rightarrow s(T)+0$, we have
$$\mathcal{L}_s(w)\leq \max\{M^{s}(w_j^i):1\leq i,j\leq q\}K(s).$$
We have proved that $\varphi_s=-s\log D_dT(x)$ with $s=s(T)$ is
summable on $X$ so that $P(T,s)\leq 0$. This immediately implies
that $P(T,s(T))=0$. \qed

Combining Corollary \ref{cor3.2} and Theorem \ref{thm3.2} deduces
that the Bowen formula holds, i.e., $P(T,s(T))=0$ and $s(T)={\rm
dim}_H(X_\infty)$ for a Walters expanding conformal map
$T:X_0\rightarrow X$ with the expanding constant $C>1$ and
$s(T)<\infty$.

The following result confirms the existence of invariant measure
which is equivalent to the conformal measure.

\begin{thm}\label{thm3.4}\ \ Let $T:X_0\rightarrow X$ be a Walters expanding conformal map
with expanding constant $C>1$ or $C=1$ and $\varrho=1$, and
$s(T)<\infty$. If
$$C(x,x')=\sup_{n\geq 1}\sup_{T^n(y)=x}\left|1-{D_dT^n(y)\over D_dT^n(y')}\right|\rightarrow 0,\ {\rm as}\
d(x,x')\rightarrow 0,$$ then there exist a $s$-conformal measure
$\mu_s$ and an invariant Gibbs measure $m_s$ which are equivalent
and furthermore, the statements listed in Theorem \ref{thm2.6}
hold.\end{thm}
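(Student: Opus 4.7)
The plan is to deduce Theorem \ref{thm3.4} by applying Theorem \ref{thm1.1} to the pair $(T,\varphi_s)$ with $s=s(T)$ and $\varphi_s=-s\log D_dT$, using Theorem \ref{thm3.2} to pin down the eigenvalue. The hypothesis $C(x,x')\to 0$ as $d(x,x')\to 0$ trivially implies $C^{(1)}(x,x')\to 0$, so Theorem \ref{thm3.2} immediately yields $P(T,s)=0$ together with a $D_dT^s$-conformal probability measure $\mu_s$ on $X$ satisfying $\mathcal{L}^*_{\varphi_s}\mu_s=\mu_s$; the associated eigenvalue is $\lambda=e^{P(T,s)}=1$. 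This already settles the existence of the conformal measure claimed in the statement.

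Next I would verify the hypotheses of Theorem \ref{thm1.1} for $(T,\varphi_s)$. Admissibility (in fact dynamical admissibility) of the pair is Lemma \ref{lem3.2}(2). To produce an iterate $T^M$ satisfying (1c*), (1g) and (1e), exploit condition (2d) of Definition \ref{def3.1}: there exist $\varrho\in(0,1]$, $C\geq 1$ and $N\geq 1$ with $d(T^{nN}(x),T^{nN}(y))\geq \varrho C^n d(x,y)$ on each component of $T^{-nN}(B(a,\delta_a))$. When $C>1$ pick $n_0$ with $\varrho C^{n_0}\geq 1$ and set $M=n_0 N$; when $C=1$ and $\varrho=1$ set $M=N$. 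In either case $T^M$ has one-step expansion factor at least $1$, which is (1c*). Condition (1g) for $(T^M,S_M\varphi_s)$ amounts to bounding $|S_{nM}\varphi_s(y)-S_{nM}\varphi_s(y')|$ uniformly in $n$ on components of $T^{-nM}(B(x,\delta))$, and this is precisely what the hypothesis $C(x,x')\to 0$ delivers as in Lemma \ref{lem3.2}(2). Condition (1e) for $T^M$ follows from (2e), which on compact $X$ is equivalent to topological exactness and is preserved under passage to an integer power by a routine continuity-plus-compactness argument.

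Once all the hypotheses of Theorem \ref{thm1.1} are met, we obtain a continuous positive eigenfunction $h$ with $\mathcal{L}_{\varphi_s}h=h$ and $\mu_s(h)=1$, the uniform convergence $\mathcal{L}^n_{\varphi_s}(f)\rightrightarrows h\cdot\mu_s(f)$, and the Gibbs invariant measure $m_s=h\cdot\mu_s$; the uniqueness part of item (1) in Theorem \ref{thm2.6} identifies the measure produced by Theorem \ref{thm1.1} with the $\mu_s$ from Theorem \ref{thm3.2}. Because $h$ is continuous and strictly positive on the compact space $X$, both $h$ and $1/h$ are bounded, so $m_s$ and $\mu_s$ are mutually absolutely continuous; items (1)--(7) of Theorem \ref{thm2.6} then transfer verbatim. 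The main technical obstacle lies in the second paragraph, namely arranging that a \emph{single} fixed power $T^M$ realizes the one-step expansion (1c*) — which is exactly why the hypothesis excludes the combination $C=1$, $\varrho<1$ — and propagating (1e) from $T$ to $T^M$; the remaining verifications are bookkeeping from the quantitative distortion estimate $C(x,x')\to 0$.
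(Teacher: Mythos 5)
Your proposal is correct and follows the same route as the paper, which obtains Theorem \ref{thm3.4} precisely by combining Lemma \ref{lem3.2}, Theorem \ref{thm3.2} (giving $P(T,s)=0$, summability of $\varphi_s$, and the conformal measure with eigenvalue $\lambda=1$) and Theorem \ref{thm1.1} applied to a suitable iterate $T^M$ satisfying (1c*), (1g) and (1e). Your write-up merely supplies the bookkeeping (choice of $M$ via (2d), propagation of (1e) to $T^M$, equivalence of $m_s=h\cdot\mu_s$ and $\mu_s$ from $h$ being bounded away from $0$ and $\infty$) that the paper leaves implicit.
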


Theorem \ref{thm3.4} is attained by applying Lemma \ref{lem3.2},
Theorem \ref{thm3.2} and Theorem \ref{thm1.1}. The existence of
$\mu_s$ and $m_s$ was stated by Kotus and Urbanski in
\cite{KotusUrbanski} with $C>1$ (and N=1) for $X\subset \mathbb{C}$
and $T$ being regular, namely, $P(T,s)=0$, as in this case,
$\varphi_s=-s\log D_dT(x)$ is dynamically H\"older continuous in
view of the Koebe's distortion theorem.

\

\vskip 1cm

\end{document}